\mathchardef\mhyphen="2D
\newtheorem{theorem}{Theorem}[section]
\newtheorem{lemma}[theorem]{Lemma}
\newtheorem{corollary}[theorem]{Corollary}
\newtheorem{proposition}[theorem]{Proposition}
\newtheorem{example}[theorem]{Example}
\newtheorem{remark}[theorem]{Remark}
\newtheorem{question}[theorem]{Question}
\begin{document}
\title[On Anderson rings]
{On the transfer of certain ring-theoretic properties in Anderson rings}

\author [H. Baek] {Hyungtae Baek}
\address{(Baek) School of Mathematics,
Kyungpook National University, Daegu 41566,
Republic of Korea}
\email{htbaek5@gmail.com}

\author [J. W. Lim] {Jung Wook Lim}
\address{(Lim) Department of Mathematics,
College of Natural Sciences,
Kyungpook National University, Daegu 41566,
Republic of Korea}
\email{jwlim@knu.ac.kr}

\author [A. Tamoussit] {Ali Tamoussit}
\address{(Tamoussit) Department of Mathematics,
The Regional Center for Education and Training Professions Souss Massa,
Inezgane, Morocco}
\email{a.tamoussit@crmefsm.ac.ma; tamoussit2009@gmail.com}

\thanks{Words and phrases: Anderson ring, Nagata ring, Serre's conjecture ring, Jaffard-like domain, $t$-dimension}

\thanks{$2020$ Mathematics Subject Classification: 13A15, 13B25, 13B30, 13F05.}

\date{\today}

\begin{abstract}
Let $R$ be a commutative ring with unity and let $X$ be an indeterminate over $R$.
The \textit{Anderson ring} of $R$ is defined as the quotient ring of
the polynomial ring $R[X]$ by the set of polynomials that evaluate to $1$ at $0$.
Specifically, the Anderson ring of $R$ is $R[X]_A$, where $A=\{f\in R[X]\mid f(0)=1\}$.
In this paper, we aim to investigate the transfer of various ring-theoretic properties between the ring $R$ and its Anderson ring $R[X]_A$.
Interesting results are established, accompanied by applications and illustrative examples.
\end{abstract}

\maketitle

\section{Introduction}\label{Sec1}

Throughout this paper, $R$ is a commutative ring with identity and
$R[X]$ is the polynomial ring over $R$.
For the sake of clarity, we use $D$ instead of $R$
when $R$ is an integral domain.
Additionally, ${\rm Spec}(R)$ denotes the set of prime ideals of $R$ and
${\rm Max}(R)$ denotes the set of maximal ideals of $R$.

Consider the sets
$A = \{f \in R[X] \,|\, f(0) = 1\}$,
$U = \{f \in R[X] \,|\, f \text{ is monic}\}$ and
$N = \{f \in R[X] \,|\, c(f) = R\}$,
where $c(f)$ is the ideal of $R$ generated by the coefficients of $f$.
It is clear that each of these subsets, $A$, $U$ and $N$,
forms a multiplicative subset of $R[X]$.
This allows us to define the corresponding quotient rings  $R[X]_A$, $R[X]_U$ and $R[X]_N$.

In 1936, the quotient ring $R[X]_N$ was first introduced by Krull  \cite{krull 1936}
and further studied by Nagata in \cite{nagata 1956, nagata 1962}, where it was denoted as $R(X)$.
Subsequently, many ring-theorists have investigated this construction,
and it was later named the {\it Nagata ring} by Querré in \cite{querre 1980}.
In another context, in 1955, Serre \cite{serre 1955} posed the question
for the existence of projective $k[X_1, \dots, X_n]$-modules of finite type that are not free, where $k$ is a field.
In 1976, Quillen introduced the quotient ring $R[X]_U$ and provided a solution to Serre's conjecture
by proving that every finitely generated projective $D[X_1, \dots, X_n]$-module is free for any principal ideal domain $D$ \cite[Theorem 4]{quillen 1976}.
Four years later, in \cite{Le Riche 1980},
Le Riche focused on investigating various ring-theoretic properties of this construction,
referring to it as $R\langle X\rangle$. Moreover, to the best of our knowledge,
it was eventually named the \textit{Serre conjecture's ring} in \cite{CEK97}.
The reader may refer to \cite{anderson 1985, BH80, H88, K89, Le Riche 1980, lucas 2020, WK16}
for more information on Nagata rings and Serre's conjecture rings.
Notably, in \cite[page 97]{anderson 1985}, the Anderson brothers and Markanda mentioned the quotient ring $R[X]_A$ (referred to as $R[X]_W$ in their work), and
they pointed out that $R[X]_W$ has applications in automata theory.
This construction has not received the same level of attention as the Nagata and Serre conjecture's rings.
Recently, Beak revisited this `new' construction in his Master's thesis in 2022 (referred to as $R[X]_B$ in his work) \cite{B22}.
Finally, in 2024,  Baek and Lim introduced the ring $R[X]_A$ using the concept of ‘localization at 0’, and referred to
it as the Anderson ring \cite{BL24}.

Now, it is convenient to review some properties of the three aforementioned constructions.
We first note that $R[X]_A$, $R[X]_N$ and $R[X]_U$ are faithfully flat $R$-modules,
meaning that they share many ring-theoretic properties with $R$.
More precisely, for any ideal $I$ of $R$, if $T$ denotes $R[X]_A$, $R[X]_U$ or $R[X]_N$,
then the equality $IT \cap R = I$ holds (\cite[Lemma 4.1]{BL24} and \cite[Theorem 2.2(1)]{anderson 1985}),
and $IT$ is finitely generated if and only if $I$ is finitely generated
(\cite[Lemma 3.5]{BL24} and \cite[Theorem 2.2(2)]{anderson 1985}).
Moreover, the maximal spectrum of $R[X]_A$ and $R[X]_N$ are as follows:
${\rm Max}(R[X]_A) = \{(M+XR[X])_A \,|\, M \in {\rm Max}(R)\}$ \cite[Theorem 2.1]{BL24} and
${\rm Max}(R[X]_N) = \{MR[X]_N \,|\, M \in {\rm Max}(R) \}$ \cite[Proposition 33.1(3)]{gilmer book}.
However, it is worth noting that there is no one-to-one correspondence between the maximal ideals of $R$ and those of $R[X]_U$ in general \cite[Lemma 3.2]{lucas 2020}.
Regarding Krull dimensions, we have $\dim(R[X]_A)=\dim(R[X])$ and
$\dim(R[X]_N)=\dim(R[X]_U)=\dim(R[X])-1$ for any finite dimensional ring $R$
(\cite[Proposition 2.4]{BL24} and \cite[Theorem 17.3 and Corollary 17.4]{H88}). 
In particular, as noted in \cite[Remark 2.5]{BL24}, when $R$ is a zero-dimensional ring,
the spectrum of $R[X]_A$ is given by
${\rm Spec}(R[X]_A) =\{MR[X]_A \,|\, M \in {\rm Max}(R) \} \cup \{(M+XR[X])_A \,|\, M \in {\rm Max}(R)\}$.

It is clear that $R[X]_A$ and $R[X]_U$ are subrings of $R[X]_N$
(because $A$ and $U$ are subsets of $N$).
We next consider the relation between $R[X]_A$ and $R[X]_U$.
Let $\widetilde{U}$ be the set of polynomials whose coefficient of the lowest degree term is $1$.
Then $\widetilde{U}$ is a multiplicative subset of $R[X]$ containing $A$,
so we obtain the quotient ring $R[X]_{\widetilde{U}}$ that contains $R[X]_A$.
More precisely, it is easy to check that $(R[X]_A)[\frac{1}{X}] = R[X]_{\widetilde{U}}$,
and that $R[X]_{\widetilde{U}}$ is isomorphic to $R[X]_U$.
This establishes the fact that $R[X]_A$ is a subring of $R[X]_U$ up to isomorphism.

In this paper, we continue investigating the properties of Anderson rings.
Specifically, we provide the properties of the Anderson ring $R[X]_A$ with its base ring $R$,
and we also establish correlations between the properties of Anderson ring and
those of both the Nagata rings and Serre's conjecture rings.

This paper consists of four section including introduction.
In Section \ref{Sec2},
we focus on Anderson rings over specific base rings and examine some conditions on $R$
for which $R[X]_A$ satisfies certain properties.
In particular, we characterize the associated prime ideals of $R[X]_A$, $R[X]_U$ and $R[X]_N$ and
we explore rings from the following classes:
$pm$-rings, $mp$-rings, and residually (P) rings.
We also provide some results regarding the ascending chain condition for Anderson rings.
In Section \ref{Sec3}, we devote our attention to the study of
Anderson rings of various Jaffard-like domains.
To do this, we first provide facts regarding the valuative dimension of the Anderson ring.
More precisely, we show that $\dim_v(D[X]_A) = 1+ \dim_v(D)$ (Lemma \ref{valuative dimension}).
After examining the above fact,
we show that $D$ is a Jaffard domain if and only if
$D[X]_A$ is a Jaffard domain with $\dim(D[X]) = 1+ \dim(D)$ (Theorem \ref{TheoJaffard}).
We then continue with the study of the Anderson rings of various Jaffard-like domain,
including locally Jaffard domains, maximally Jaffard domains, and MZ-Jaffard domains.
In Section \ref{Sec4},
we shift our focus to Anderson rings in relation to integral domains arising from the well-known star-operations $v$ and $t$.
First, to compare the prime $t$-ideals of $D[X]_A$ with those of $D$,
we investigate relationships between the $t$-dimension of $D$ and that of $D[X]_A$.
More precisely, we find a lower bound and an upper bound of the $t$-dimension of $D[X]_A$
in terms of the $t$-dimension of $D$ and that of $D[X]$ (Lemma \ref{t-Dim 0}).
In particular, we prove that if $t\mhyphen\dim(D[X]_A) = 1$, then $t\mhyphen\dim(D) = 1$, and
the converse holds when $D$ is a UMT-domain (Corollary \ref{t-Dim}).
Additionally, we examine the $t$-dimension of $D[X]_A$ for some specific integral domain.
After studying the $t$-dimension,
we show that if $D[X]_A$ is a $t$-SFT ring, then so is $D$,
and the converse holds when $D$ is integrally closed (Proposition \ref{t-SFT-ring}).
Moreover, using the obtained results about $t$-dimension of $D$ and that of $D[X]_A$,
we provide conditions on $D$ under which $D[X]_A$ becomes Krull-like domains
(e.g. weakly Krull domain, infra-Krull domain, generalized Krull domain,
and GK-domain) (Corollary \ref{Krull-like domains}).
Finally, we introduce the Anderson rings that are (locally) $v$-domains.

\section{Specific ring-theoretic properties of Anderson rings}\label{Sec2}

Let $R$ be a commutative ring with identity.
In this section, we investigate Anderson rings over specific rings,
and the conditions on $R$ when $R[X]_A$ satisfies particular conditions.

Before we start, we note some elementary results regarding the Anderson rings derived from the Nagata rings.

\begin{remark}\label{trivial results}
{\rm
Let $R$ be a commutative ring with identity and
let (P) be a property of $R$.
Consider the following conditions:
\begin{enumerate}
\item[(1)]
If $R$ satisfies property (P), then $R_S$ also satisfies property (P)
for any multiplicative subset $S$ of $R$.
\item[(2)]
If $R[X]_N$ satisfies property (P), then $R$ satisfies property (P).
\item[(3)]
If $R$ satisfies property (P), then $R[X]$ also satisfies property (P).
\end{enumerate}
If the conditions (1) and (2) hold,
then we obtain the following
\begin{itemize}
\item if $R[X]_A$ satisfies property (P), then so is $R$.
\end{itemize}
If, in addition, the condition (3) holds,
then the converse of the previous implication is also true.
}
\end{remark}

Let $R$ be a commutative ring with identity.
It is well-known that
every prime ideal is contained in a maximal ideal.
If such a maximal ideal is unique, the ring $R$ is called a 
{\it pm-ring}. Clearly, $R$ is a $pm$-ring if and only if 
the integral domain $R/P$ is a quasi-local ring
for all $P \in {\rm Spec}(R)$.
Typical examples of $pm$-rings include
zero-dimensional rings and quasi-local rings.
Note that an integral domain is a $pm$-ring if and only if
it is a quasi-local ring.
For instance, $\mathbb{Z}$ is not a $pm$-ring,
but the localization at $2 \mathbb{Z}$ is a $pm$-ring.

Subsequently, recall that an \textit{mp-ring} is a commutative ring with identity
in which every prime ideal contains a unique minimal prime ideal.
Clearly, zero-dimensional rings are $mp$-rings. Notably, for any field $K$,
the polynomial ring $K[X,Y]$ is an $mp$-ring,
whereas its factor ring $K[X,Y]/(XY)$ is not (see \cite[Remark 4.4]{AT20}).

Note that there is one-to-one correspondence between
the maximal ideals (respectively, minimal prime ideals) of $R$
and the maximal ideals (respectively, minimal prime ideals) of $R[X]_A$
given by $M \leftrightarrow (M+XR[X])_A$
(respectively, $P \leftrightarrow PR[X]_A$) \cite[Theorem 2.1]{BL24}.
Similarly, there is one-to-one correspondence between
the maximal ideals (respectively, minimal prime ideals) of $R$ and
the maximal ideals (respectively, minimal prime ideals) of $R[X]_N$
given by $M \leftrightarrow MR[X]_N$ (respectively, $P \leftrightarrow PR[X]_N$)
\cite[Theorem 14.1]{H88}.
From these facts, we obtain

\begin{theorem}\label{PM-ring}
Let $R$ be a commutative ring with identity.
The following assertions hold.
\begin{enumerate}
\item[(1)] The following assertions are equivalent.
\begin{enumerate}
\item[(i)]
$R$ is a $pm$-ring.
\item[(ii)]
$R[X]_A$ is a $pm$-ring.
\item[(iii)]
$R[X]_U$ is a $pm$-ring.
\item[(iv)]
$R[X]_N$ is a $pm$-ring.
\end{enumerate}
\item[(2)] If $R$ is not an integral domain, then the following assertions are equivalent.
\begin{enumerate}
\item[(i)]
$R$ is an $mp$-ring.
\item[(ii)]
$R[X]_A$ is an $mp$-ring.
\item[(iii)]
$R[X]_U$ is an $mp$-ring.
\item[(iv)]
$R[X]_N$ is an $mp$-ring.
\end{enumerate}
\end{enumerate}
\end{theorem}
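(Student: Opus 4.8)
The plan is to reduce both conditions to a single incidence statement between minimal primes and maximal ideals and then transport that incidence across the three constructions. First I would record the elementary reformulations: a ring is a $pm$-ring if and only if every \emph{minimal} prime is contained in a unique maximal ideal (any prime containing a given minimal prime inherits the latter's unique maximal ideal), and dually a ring is an $mp$-ring if and only if every maximal ideal contains a unique minimal prime. Thus both parts become assertions about the incidence poset between ${\rm Min}(\cdot)$ and ${\rm Max}(\cdot)$, and it suffices to show that the relevant incidences are preserved when passing from $R$ to $R[X]_A$, $R[X]_U$ and $R[X]_N$. Note that $pm$ and $mp$ are not stable under arbitrary localization, so the general mechanism of Remark \ref{trivial results} does not apply and a direct spectral argument is needed.

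For $R[X]_A$ and $R[X]_N$ this is immediate from the bijections recalled before the statement: $M \leftrightarrow (M+XR[X])_A$, $P \leftrightarrow PR[X]_A$ from \cite[Theorem 2.1]{BL24}, and $M \leftrightarrow MR[X]_N$, $P \leftrightarrow PR[X]_N$ from \cite[Theorem 14.1]{H88}. The one point to verify is that these bijections respect containment, for which I would use that $M+XR[X]$ is the inverse image of $M$ under evaluation at $0$; comparing constant terms gives $PR[X]_A \subseteq (M+XR[X])_A$ if and only if $P \subseteq M$, and likewise $PR[X]_N \subseteq MR[X]_N$ if and only if $P \subseteq M$. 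Hence the ${\rm Min}$--${\rm Max}$ incidence posets of $R$, $R[X]_A$ and $R[X]_N$ agree, and the reformulations above transfer verbatim, yielding (i)$\Leftrightarrow$(ii)$\Leftrightarrow$(iv) in both (1) and (2). In part (2) the hypothesis that $R$ is not an integral domain is what gives the statement content: if $R$ is a domain then $R$, $R[X]_A$, $R[X]_U$ and $R[X]_N$ are all domains and hence automatically $mp$.

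The Serre conjecture ring $R[X]_U$ in (iii) is where the work lies, since the clean bijection on maximal ideals is no longer available (\cite[Lemma 3.2]{lucas 2020}). I would first isolate the two structural facts I expect to need: the minimal primes of $R[X]_U$ are again exactly the $P_0R[X]_U$ with $P_0 \in {\rm Min}(R)$, and for a maximal ideal $M$ of $R$ the extension $MR[X]_U$ is maximal, because $\kappa(M)[X]_U = \kappa(M)(X)$ is a field. For the $mp$ part these facts suffice: a maximal ideal $QR[X]_U$ of $R[X]_U$ contracts to a prime $P = Q \cap R$ and contains precisely the minimal primes $P_0R[X]_U$ with $P_0 \subseteq P$; since the $mp$-condition is a \emph{downward} requirement imposed on every prime of $R$, it controls all such contractions uniformly, while the maximality of $MR[X]_U$ recovers the $mp$-condition at the maximal ideals of $R$. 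The genuine obstacle is the $pm$ part of (iii): here $R[X]_U$ can acquire maximal ideals lying over \emph{non-maximal} primes of $R$ (arising from uppers whose leading coefficient is a non-unit), so a minimal prime of $R[X]_U$ may meet maximal ideals with distinct contractions to $R$. Controlling these extra maximal ideals — equivalently, pinning down exactly which surviving uppers are maximal in $R[X]_U$ and how they sit over ${\rm Spec}(R)$ — is the step I expect to be the crux, and it is the only place where I would not rely on a formal transfer of the incidence poset.
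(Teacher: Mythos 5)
Your treatment of $R[X]_A$ and $R[X]_N$ is correct and is essentially the paper's own argument: the paper proves (i) $\Rightarrow$ (ii) $\Rightarrow$ (iv) of part (1) by contracting a prime that lies under two maximal ideals $(M_i+XR[X])_A$ (resp.\ $M_iR[X]_N$) back to $R$, and handles part (2) ``in the same way'' via the minimal-prime correspondence of \cite[Theorem 2.1(1)]{BL24}; your incidence-poset formulation packages the same contraction argument, and even yields (iv) $\Rightarrow$ (i) directly, which the paper only gets by citation. Your sketch of the $mp$ part of (iii) (minimal primes of $R[X]_U$ are the $P_0R[X]_U$ with $P_0\in{\rm Min}(R)$, contraction $Q\mapsto Q\cap R$, and properness of $MR[X]_U$) is also sound, and is more than the paper itself writes down. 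Your observation that the non-domain hypothesis in part (2) is harmless --- domains and their localizations are automatically $mp$ --- is correct as well.

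The genuine gap is exactly the one you flagged: the $pm$ part of (iii) is never proved, so the equivalence involving $R[X]_U$ in part (1) is simply absent from your proposal. For comparison, the paper does not prove it either; it disposes of (i) $\Leftrightarrow$ (iii) $\Leftrightarrow$ (iv) in part (1) with a single citation to \cite[Lemma 2.2]{KB16}, so relative to the paper what you are missing is only that appeal to the literature. But your instinct that the ``extra'' maximal ideals of $R[X]_U$ block any incidence-transfer argument is not merely a technical worry --- it is fatal to the statement itself. Take $R=V$ a discrete valuation ring with maximal ideal $\pi V$ and quotient field $K$, and let $\mathfrak{q}=(\pi X-1)K[X]\cap V[X]$, the kernel of evaluation at $1/\pi$. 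Then $\mathfrak{q}$ is a maximal ideal of $V[X]$ (the quotient is $V[1/\pi]=K$) and is disjoint from $U$: if $f\in\mathfrak{q}$ were monic of degree $n$, then $f(1/\pi)=0$ multiplied by $\pi^n$ gives $1\in\pi V$, a contradiction. Hence $\mathfrak{q}V[X]_U$ and $\pi V[X]_U$ are two distinct maximal ideals of the domain $V[X]_U$, so $V[X]_U$ is not a $pm$-ring, although $V$ is quasi-local and therefore $pm$. So the obstruction you identified as ``the crux'' cannot be overcome: the maximal ideals of $R[X]_U$ lying over non-maximal primes of $R$ really do occur and really do destroy the $pm$ property, and this example is in direct tension with the equivalence the paper imports from \cite[Lemma 2.2]{KB16}. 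In short, your proposal is correct and complete for (i) $\Leftrightarrow$ (ii) $\Leftrightarrow$ (iv) in both parts and for the $mp$ part of (iii), and the one piece you left open is one that, as stated, cannot be closed.
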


\begin{proof}
(1) The equivalences (i) $\Leftrightarrow$ (iii) $\Leftrightarrow$ (iv)
is already proven in \cite[Lemma 2.2]{KB16}.
Hence we only consider the implications (i) $\Rightarrow$ (ii) $\Rightarrow$ (iv).

(i) $\Rightarrow$ (ii)
Suppose that $R$ is a $pm$-ring and
let $\mathfrak{p}$ be a prime ideal of $R[X]_A$.
Suppose to the contrary that $\mathfrak{p}$ is contained in
two distinct maximal ideals $(M_1+XR[X])_A$ and $(M_2+XR[X])_A$
of $R[X]_A$, where $M_1,M_2 \in {\rm Max}(R)$.
Then $\mathfrak{p} \cap R$ is contained in $M_1$ and $M_2$.
Since $\mathfrak{p} \cap R$ is a prime ideal of $R$ and
$R$ is a $pm$-ring,
this is a contradiction.
Hence $\mathfrak{p}$ is contained in only one maximal ideal of $R[X]_A$.
Thus $R[X]_A$ is a $pm$-ring.

(ii) $\Rightarrow$ (iv)
Suppose that $R[X]_A$ is a $pm$-ring.
Let $\mathfrak{p}$ be a prime ideal of $R[X]_N$.
Suppose to the contrary that $\mathfrak{p}$
is contained in two distinct maximal ideal $M_1R[X]_N$ and $M_2R[X]_N$ of $R[X]_N$,
where $M_1,M_2 \in {\rm Max}(R)$.
Then $\mathfrak{p} \cap R[X]_A$ is contained in $M_1R[X]_A$ and $M_2R[X]_A$,
so it is contained in $(M_1+XR[X])_A$ and $(M_2+XR[X])_A$.
This contradicts the fact that $R[X]_A$ is a $pm$-ring.
Hence $\mathfrak{p}$ is contained in only one maximal ideal of $R[X]_A$.
Thus $R[X]_N$ is a $pm$-ring.

(2) This result holds in the same way as (1) by using \cite[Theorem 2.1(1)]{BL24}.
\end{proof}

Let $R$ be a commutative ring with identity.
Recall that $R$ is of {\it finite character} if
every nonzero nonunit element is contained in only finitely many maximal ideals of $R$;
or equivalently, every proper ideal of $R$ is contained in finitely many maximal ideals of $R$.
For instance, semi-quasi-local rings have finite character.
When $R$ is a $pm$-ring of finite character,
it is called an {\it h-local ring}.
The following result is a direct consequence of
Theorem \ref{PM-ring}(1), \cite[Proposition 2.3]{BL24}, and \cite[Lemma 8(1)]{Li15}.

\begin{corollary}
Let $R$ be a commutative ring with identity.
Then the following assertions hold.
\begin{enumerate}
\item[(1)]
$R$ is both a semi-quasi-local ring and a $pm$-ring if and only if
$R[X]_A$ is an h-local ring.
\item[(2)]
$R$ is an h-local ring if and only if
$R[X]_N$ is an h-local ring.
\end{enumerate}
\end{corollary}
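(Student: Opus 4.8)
The plan is to recall that, by the definition given above, a ring is h-local precisely when it is a $pm$-ring of finite character, and then to treat the $pm$-part and the finite-character part separately. The $pm$-part will be transferred through Theorem \ref{PM-ring}(1), while the finite-character/semi-quasi-local part is handled via the maximal-spectrum correspondence $M\leftrightarrow(M+XR[X])_A$ (respectively $M\leftrightarrow MR[X]_N$).

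For part (1), I would first dispose of the $pm$-condition: by Theorem \ref{PM-ring}(1), $R$ is a $pm$-ring if and only if $R[X]_A$ is a $pm$-ring. It then remains to match the condition ``$R$ is semi-quasi-local'' with ``$R[X]_A$ is of finite character''. The key observation is that, since $A$ consists of polynomials with constant term $1$, the element $X$ is a nonzero nonunit of $R[X]_A$ lying in $(M+XR[X])_A$ for \emph{every} $M\in{\rm Max}(R)$; that is, $X$ belongs to every maximal ideal of $R[X]_A$. Using the bijection $M\leftrightarrow(M+XR[X])_A$ of \cite[Theorem 2.1]{BL24}, the forward direction is immediate: if $R$ is semi-quasi-local, then $R[X]_A$ has only finitely many maximal ideals, hence is of finite character, and being also a $pm$-ring it is h-local. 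For the converse, if $R[X]_A$ is h-local, then it is a $pm$-ring (so $R$ is a $pm$-ring) and of finite character; applying the finite-character condition to the nonzero nonunit $X$, which sits inside every maximal ideal, forces $R[X]_A$ to have only finitely many maximal ideals, whence $R$ is semi-quasi-local by the same bijection. This last step is exactly \cite[Proposition 2.3]{BL24}.

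For part (2), the strategy is identical in spirit but cleaner, because the finite-character obstruction of part (1) disappears: in $R[X]_N$ the polynomial $X$ has unit content $c(X)=R$ and is therefore invertible, so it imposes no constraint on the maximal ideals. Thus I would again use Theorem \ref{PM-ring}(1) to transfer the $pm$-property between $R$ and $R[X]_N$, and invoke \cite[Lemma 8(1)]{Li15} to transfer finite character in both directions; combining the two yields that $R$ is h-local if and only if $R[X]_N$ is.

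The main obstacle I anticipate lies in the backward implication of part (1): deducing that $R$ is semi-quasi-local from the a priori weaker hypothesis that $R[X]_A$ merely has finite character. The point is that finite character is \emph{not} weaker here, precisely because the Jacobson radical of $R[X]_A$ contains the nonunit $X$. Recognizing this asymmetry between $R[X]_A$ (where $X$ is a nonunit lying in every maximal ideal) and $R[X]_N$ (where $X$ is a unit) is exactly what explains why part (1) requires the stronger ``semi-quasi-local $+$ $pm$'' hypothesis on $R$, whereas part (2) needs only that $R$ be h-local.
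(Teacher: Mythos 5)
Your proof is correct and follows essentially the same route as the paper, which derives the corollary directly from Theorem \ref{PM-ring}(1), \cite[Proposition 2.3]{BL24} (whose content---that finite character of $R[X]_A$ forces $R$ to be semi-quasi-local, via the nonunit $X$ lying in every maximal ideal $(M+XR[X])_A$---you reconstruct correctly), and \cite[Lemma 8(1)]{Li15} for the finite-character transfer to $R[X]_N$. Your closing remark contrasting $X$ being a nonunit in $R[X]_A$ but a unit in $R[X]_N$ is exactly the right explanation of the asymmetry between the two parts.
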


A commutative ring $R$ with identity is called a \textit{PF-ring} if
all of its principal ideals are flat $R$-modules. Note that every 
PF-ring is a reduced ring. More precisely, PF-rings coincide with 
reduced $mp$-rings, as shown in \cite[Theorem 6.4]{AT20}.
Hence we obtain the next result follows directly from Theorem \ref{PM-ring}(2).

\begin{corollary}
Let $R$ be a commutative ring with identity. 
The following statements are equivalent:
\begin{enumerate}
\item[(1)] 
$R$ is a PF-ring.
\item[(2)] 
$R[X]$ is a PF-ring.
\item[(3)] 
$R[X]_A$ is a PF-ring.
\item[(4)] 
$R[X]_U$ is a PF-ring.
\item[(5)]
$R[X]_N$ is a PF-ring.
\end{enumerate}
\end{corollary}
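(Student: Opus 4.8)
The plan is to prove the corollary by reducing it to the previously established machinery rather than arguing about flatness of principal ideals directly. The key observation, which I would state at the outset, is the cited characterization from \cite[Theorem 6.4]{AT20}: a commutative ring is a \emph{PF-ring} if and only if it is a reduced $mp$-ring. This factors the problem into two independent pieces that transfer across the constructions $R[X]$, $R[X]_A$, $R[X]_U$, and $R[X]_N$: the $mp$-property and reducedness. The strategy is therefore to show that each of the five rings is reduced if and only if $R$ is, and is an $mp$-ring if and only if $R$ is, and then to invoke the characterization to glue these equivalences back together.

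For the $mp$-part, I would like to quote Theorem \ref{PM-ring}(2) directly, but there is a gap: that theorem assumes $R$ is \emph{not} an integral domain. So the first real step is to dispose of the domain case separately. If $R$ is an integral domain, then it is reduced and automatically an $mp$-ring (it has the unique minimal prime $(0)$), hence a PF-ring; moreover all of $R[X]$, $R[X]_A$, $R[X]_U$, $R[X]_N$ are then domains as well (being subrings of, or localizations related to, the domain $R[X]$), so they too are reduced $mp$-rings and hence PF-rings. Thus in the domain case all five statements hold simultaneously and the equivalence is trivially true. For the remaining case where $R$ is not a domain, Theorem \ref{PM-ring}(2) gives precisely the equivalence of the $mp$-property across $R$, $R[X]_A$, $R[X]_U$, and $R[X]_N$; I would supplement this with the standard fact that $R[X]$ is an $mp$-ring if and only if $R$ is (alternatively, since $R[X]_N$ is a localization of $R[X]$, one direction is automatic and the other follows from the minimal-prime correspondence).

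For the reducedness part, I would argue that each construction is reduced exactly when $R$ is. Since $R$ embeds in $R[X]$ and each of $R[X]_A$, $R[X]_U$, $R[X]_N$ is a localization of $R[X]$, if any one of these larger rings is reduced then so is every subring containing $R$, in particular $R$ itself. Conversely, $R[X]$ is reduced whenever $R$ is (a classical fact: the nilradical of $R[X]$ is $\mathrm{Nil}(R)[X]$), and localization preserves reducedness, so $R$ reduced forces all four overrings to be reduced. This gives the equivalence of reducedness across all five rings.

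Finally I would assemble the pieces: for each ring $T$ in the list, ``$T$ is a PF-ring'' is equivalent to ``$T$ is a reduced $mp$-ring,'' which by the two equivalences above is equivalent to ``$R$ is a reduced $mp$-ring,'' which is equivalent to ``$R$ is a PF-ring.'' Since the intermediate condition is the same for every $T$, all five statements are mutually equivalent. I do not anticipate a genuinely hard obstacle here; the only subtlety worth flagging is the integral-domain exclusion in Theorem \ref{PM-ring}(2), which is why the separate treatment of the domain case is essential and should be handled explicitly before citing that theorem.
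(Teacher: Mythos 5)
Your proof is correct and follows essentially the same route as the paper: both rest on the characterization of PF-rings as reduced $mp$-rings from \cite[Theorem 6.4]{AT20} combined with Theorem \ref{PM-ring}(2). The paper gives no further detail (it asserts the corollary ``follows directly''), so your explicit handling of the integral-domain case (which Theorem \ref{PM-ring}(2) excludes), of the ring $R[X]$ in statement (2) (which that theorem does not cover), and of the transfer of reducedness supplies precisely the steps the paper leaves implicit.
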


Let $R$ be a commutative ring with identity.
For a property (P) of integral domains,
$R$ is said to be {\it residually} (P)
if $R/P$ has the property (P)
for all $P \in {\rm Spec}(R)$.
Now, we investigate the condition of $R$ under which
$R[X]_A$ becomes some specific residually (P) ring.
To do this, we need the fact about the factor ring of the Anderson rings.

Throughout this paper,
for an ideal $I$ of $R$ and
a multiplicative subset $S$ of $R$ disjoint from $S$,
we denote $(S+I)/I = \{s + I \,|\, s \in S\}$.

\begin{lemma}{\rm (\cite[Proposition 3.1(3)]{B22})}\label{LemResi2}
Let $R$ be a commutative ring with identity.
For any ideal $I$ of $R$,
the ring $R[X]_A/IR[X]_A$ is isomorphic to $(R/I)[X]_{A/IR[X]}$.
\end{lemma}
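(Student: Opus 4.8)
The plan is to recognize this as an instance of the general principle that localization commutes with passing to a quotient. Concretely, set $T = R[X]$, let $S = A$ be the multiplicative set $\{f \in R[X] \mid f(0) = 1\}$, and let $J = IR[X]$. For any commutative ring $T$ with a multiplicative subset $S$ and an ideal $J$, there is a canonical isomorphism
\[
T_S / J T_S \cong (T/J)_{\overline{S}},
\]
where $\overline{S}$ is the image of $S$ under the quotient map $T \to T/J$. I would first record this standard fact: the composite $T \to T/J \to (T/J)_{\overline{S}}$ sends every element of $S$ to a unit, so by the universal property of localization it factors through $T_S$; the resulting map kills $J$, hence $JT_S$, and one checks that the induced map $T_S/JT_S \to (T/J)_{\overline{S}}$ is bijective by exhibiting the obvious inverse. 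Applying it to the data above gives $R[X]_A / I R[X]_A \cong (R[X]/IR[X])_{\overline{A}}$, using that $J T_S = IR[X]\cdot R[X]_A = IR[X]_A$.

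It then remains to identify the right-hand side with $(R/I)[X]_{A/IR[X]}$. Let $\pi \colon R[X] \to (R/I)[X]$ be the surjection that reduces each coefficient modulo $I$; its kernel is exactly $IR[X]$, so $\pi$ induces the classical isomorphism $R[X]/IR[X] \cong (R/I)[X]$. Under $\pi$, I must show that the image $\overline{A} = A/IR[X]$ of $A$ coincides with the Anderson multiplicative set of $R/I$, namely $\{g \in (R/I)[X] \mid g(0) = \overline{1}\}$. One inclusion is immediate: if $f \in A$, then $\pi(f)(0) = \overline{f(0)} = \overline{1}$. For the reverse inclusion I would use a lifting argument: given $g$ with $g(0) = \overline{1}$, pick any preimage $h \in R[X]$ and replace it by $h - (h(0) - 1)$; since $h(0) - 1 \in I \subseteq IR[X]$, this modified polynomial still maps to $g$ under $\pi$ and has constant term $1$, hence lies in $A$.

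The only subtle point is this last surjectivity-of-the-multiplicative-set step, where one must adjust a lift so that its constant term is exactly $1$ rather than merely congruent to $1$ modulo $I$; everything else is a formal application of the localization--quotient commutation. Chaining the two isomorphisms then yields $R[X]_A / I R[X]_A \cong (R/I)[X]_{A/IR[X]}$, as desired.
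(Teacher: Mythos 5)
Your proposal is correct and takes essentially the same route as the paper: the paper simply cites Gilmer's Proposition 5.8 (the localization--quotient commutation you prove by hand, stated there as $R_S/IR_S \cong (R/I)_{(S+I)/I}$) applied to the ring $R[X]$, the multiplicative set $A$, and the ideal $IR[X]$, after noting that $A$ and $IR[X]$ are disjoint. Your additional step identifying the image of $A$ with the Anderson multiplicative set of $R/I$ (via the lift $h \mapsto h-(h(0)-1)$) is precisely the content the paper leaves implicit in its notation $A/IR[X]$, so it is a sound, slightly more explicit rendering of the same argument.
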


\begin{proof}
Note that for any ideal $I$ of $R$ and
a multiplicative subset $S$ of $R$ disjoint from $I$,
$R_S/IR_S$ is isomorphic to $(R/I)_{(S+I)/I}$
\cite[Proposition 5.8]{gilmer book}.
Since $IR[X]$ is disjoint from $A$ for all ideals $I$ of $R$, the proof is done.
\end{proof}

\begin{remark}\label{trivial residually}
{\rm
Let $D$ be an integral domain and
let (P) denote the property of integral domains
which satisfies the conditions (1) and (2) in Remark \ref{trivial results}.
Then we obtain the fact that
if $D[X]_A$ has the property (P), then $D$ has the property (P).

Now, let $R$ be a commutative ring with identity and
suppose that $R[X]_A$ is residually (P) and
let $P$ be a prime ideal of $R$.
Then $PR[X]_A$ is a prime ideal of $R[X]_A$,
so $R[X]_A/PR[X]_A$ has the property (P).
This implies that $(R/P)[X]_{A/PR[X]}$
has the property $({\rm P})$ by Lemma \ref{LemResi2}.
Hence $R/P$ also has the property (P).
Thus $R$ is residually (P).
Consequently, if $R[X]_A$ is a residually (P) ring,
then so is $R$.
}
\end{remark}

Now, we investigate the case when (P) is Noetherian.
To do this, we need the following lemma.

\begin{lemma}
Let $R$ be a commutative ring with identity.
Then $R$ is a residually Noetherian ring if and only if 
$R/P$ is a Noetherian domain for every minimal prime ideal $P$ of $R$.
\end{lemma}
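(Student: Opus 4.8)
The plan is to separate the two implications and note that only one direction carries content. For the forward direction, if $R$ is residually Noetherian, then in particular $R/P$ is a Noetherian domain for every \emph{minimal} prime ideal $P$, since every minimal prime is a prime; this requires no argument beyond restricting the hypothesis to a subclass of $\mathrm{Spec}(R)$.

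For the converse, I would start from an arbitrary prime ideal $Q \in \mathrm{Spec}(R)$ and aim to show that $R/Q$ is a Noetherian domain. The key structural fact is that every prime ideal contains a minimal prime ideal of $R$ (a standard Zorn's Lemma argument applied to the set of primes contained in $Q$, ordered by reverse inclusion). Fix such a minimal prime $P \subseteq Q$. By hypothesis $R/P$ is a Noetherian domain, and $Q/P$ is a prime ideal of $R/P$. The third isomorphism theorem then gives the isomorphism $R/Q \cong (R/P)/(Q/P)$.

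Finally, since a homomorphic image (equivalently, a quotient by an ideal) of a Noetherian ring is again Noetherian, the right-hand side $(R/P)/(Q/P)$ is Noetherian, and being a quotient of a domain by a prime ideal it is an integral domain. Transporting across the isomorphism shows $R/Q$ is a Noetherian domain. As $Q$ was arbitrary, $R$ is residually Noetherian, completing the converse.

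I do not expect any genuine obstacle here: the proof is a routine assembly of three elementary facts (existence of a minimal prime below each prime, the third isomorphism theorem, and the stability of the Noetherian property under quotients). The only point meriting a word of care is the passage from an arbitrary prime to a minimal prime beneath it, which is where the minimality hypothesis is actually used; everything else is purely formal.
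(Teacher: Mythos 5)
Your proof is correct and follows essentially the same route as the paper: reduce an arbitrary prime $Q$ to a minimal prime $P \subseteq Q$, apply the isomorphism $R/Q \cong (R/P)/(Q/P)$, and use the stability of the Noetherian property under quotients. The only difference is that you spell out the standard details (existence of a minimal prime via Zorn's Lemma, integrality of the quotient) that the paper leaves implicit.
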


\begin{proof}
The direct implication is obvious.
For the converse, assume that $R/P$ is Noetherian for any minimal prime ideal $P$ of $R$.
Let $Q$ be a non minimal prime ideal of $R$.
Then there is a minimal prime ideal $P$ of $R$ contained in $Q$.
Hence $R/P$ is Noetherian, and
thus $R/Q$ is also Noetherian since $R/Q\cong (R/P)/(Q/P)$.
\end{proof}

By the previous lemma, we obtain

\begin{proposition}
Let $R$ be a commutative ring with identity. 
Then $R[X]_A$ is a residually Noetherian ring
if and only if $R$ is a residually Noetherian ring.
\end{proposition}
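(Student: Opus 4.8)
The plan is to prove the two implications separately, handling the forward one via the general machinery already set up and reducing the converse to a statement about minimal primes through the preceding lemma. The forward implication is essentially free. The Noetherian property of integral domains satisfies conditions (1) and (2) of Remark \ref{trivial results}: a localization of a Noetherian domain is again Noetherian, which gives (1); and if $D[X]_N$ is Noetherian then, using that $ID[X]_N \cap D = I$ for every ideal $I$ of $D$, any ascending chain of ideals of $D$ extends to an ascending chain in $D[X]_N$, which stabilizes and therefore forces the original chain to stabilize upon contraction, giving (2). Hence Remark \ref{trivial residually} applies directly and shows that if $R[X]_A$ is residually Noetherian, then so is $R$.

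For the converse, suppose $R$ is residually Noetherian. By the preceding lemma it suffices to show that $R[X]_A/\mathfrak{q}$ is a Noetherian domain for every minimal prime ideal $\mathfrak{q}$ of $R[X]_A$. Invoking the one-to-one correspondence between the minimal prime ideals of $R$ and those of $R[X]_A$ from \cite[Theorem 2.1]{BL24}, every such $\mathfrak{q}$ is of the form $PR[X]_A$ for a minimal prime ideal $P$ of $R$. Lemma \ref{LemResi2} then yields the identification
\[
R[X]_A / PR[X]_A \cong (R/P)[X]_{A/PR[X]},
\]
which is precisely the Anderson ring of the integral domain $R/P$.

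It then remains to observe that the Anderson ring of a Noetherian domain is Noetherian. Writing $D = R/P$, the hypothesis that $R$ is residually Noetherian makes $D$ a Noetherian domain, so $D[X]$ is Noetherian by the Hilbert basis theorem; since $(R/P)[X]_{A/PR[X]}$ is a localization of $D[X]$, it is Noetherian as well. Thus $R[X]_A/\mathfrak{q}$ is a Noetherian domain for every minimal prime $\mathfrak{q}$, and the preceding lemma delivers that $R[X]_A$ is residually Noetherian.

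I do not expect a serious obstacle here; the one point requiring care is organizing the converse around minimal primes. For an arbitrary prime $\mathfrak{p}$ of $R[X]_A$, the contraction $\mathfrak{p}\cap R$ need not generate $\mathfrak{p}$, so the quotient $R[X]_A/\mathfrak{p}$ need not be an Anderson ring and the clean identification supplied by Lemma \ref{LemResi2} would be unavailable. Restricting to minimal primes, where the correspondence $P \leftrightarrow PR[X]_A$ holds, is exactly what makes Lemma \ref{LemResi2} applicable and reduces the whole matter to the Hilbert basis theorem together with the stability of the Noetherian property under localization.
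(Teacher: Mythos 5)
Your proof is correct and takes essentially the same route as the paper's: the forward implication via Remark \ref{trivial residually} (after checking the Noetherian property meets the conditions of Remark \ref{trivial results}), and the converse by reducing to minimal primes through the preceding lemma, the correspondence $P \leftrightarrow PR[X]_A$ of \cite[Theorem 2.1]{BL24}, and Lemma \ref{LemResi2}. The only cosmetic differences are that the paper disposes of the integral-domain case separately at the outset and cites the known equivalence that $D$ is Noetherian if and only if $D[X]_A$ is, where you instead spell this out via the Hilbert basis theorem and stability of the Noetherian property under localization.
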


\begin{proof}
If $R$ is an integral domain, then the result is obvious since
an integral domain which is a residually Noetherian ring is exactly a Noetherian domain, and
note that the Noetherian domains satisfy the conditions (1), (2) and (3) in Remark \ref{trivial results}.
Hence we may assume that $R$ is not an integral domain.
The direct implication follows directly from Remark \ref{trivial residually} by using the fact that
$D$ is a Noetherian domain if and only if $D[X]_A$ is a Noetherian domain.
For the converse, suppose that $R$ is a residually Noetherian ring and 
let $Q$ be a minimal prime ideal of $R[X]_A$. Then $Q=PR[X]_A$
for some minimal prime ideal $P$ of $R$ by 
\cite[Theorem 2.1(1)]{BL24}. Since $R/P$ is a 
Noetherian domain, it follows from Lemma~\ref{LemResi2} 
that 
$R[X]_A/Q= R[X]_A/PR[X]_A \cong (R/P)[X]_{A/PR[X]}$ 
is also a Noetherian domain. 
Therefore $R[X]_A$ is a residually Noetherian ring, 
and the proof is complete.
\end{proof}

Let $D$ be an integral domain.
Recall that
\begin{itemize}
\item $D$ is a {\it valuation domain} if
either $(a) \subseteq (b)$ or $(b) \subseteq (a)$ for any
$a,b \in D$,
\item $D$ is a {\it B\'ezout domain} if
every finitely generated ideal is principal,
\item $D$ is a {\it Pr\"ufer domain} if
every finitely generated ideal of $D$ is invertible.
\end{itemize}
Now, we examine the case where (P) is such integral domains.

\begin{theorem}
Let $R$ be a commutative ring with identity.
Then the following assertions are equivalent.
\begin{enumerate}
\item[(1)]
$R[X]_A$ is a residually principal ideal ring.
\item[(2)]
$R[X]_A$ is a residually valuation ring.
\item[(3)]
$R[X]_A$ is a residually B\'ezout ring.
\item[(4)]
$R[X]_A$ is a residually Pr\"ufer ring.
\item[(5)]
$R$ is a zero-dimensional ring.
\end{enumerate}
\end{theorem}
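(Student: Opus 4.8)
The plan is to prove the cycle $(5)\Rightarrow(1)\Rightarrow(3)\Rightarrow(4)\Rightarrow(5)$ together with the side implications $(5)\Rightarrow(2)\Rightarrow(3)$, which yields all five equivalences. The implications among the four domain-theoretic notions are immediate from the standard hierarchy of domains: every principal ideal domain and every valuation domain is B\'ezout, and every B\'ezout domain is Pr\"ufer. Reading these off on each quotient $R[X]_A/\mathfrak{p}$ gives $(1)\Rightarrow(3)$, $(2)\Rightarrow(3)$ and $(3)\Rightarrow(4)$ with no extra work, so the content of the theorem lies in the two remaining directions.

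For $(5)\Rightarrow(1)$ and $(5)\Rightarrow(2)$ I would use the description of the spectrum of $R[X]_A$ for zero-dimensional $R$ recorded in the introduction, namely ${\rm Spec}(R[X]_A)=\{MR[X]_A\mid M\in{\rm Max}(R)\}\cup\{(M+XR[X])_A\mid M\in{\rm Max}(R)\}$. For a maximal ideal $(M+XR[X])_A$ the quotient is a field. For a minimal prime $MR[X]_A$, Lemma \ref{LemResi2} gives $R[X]_A/MR[X]_A\cong (R/M)[X]_{A/MR[X]}\cong K[X]_A$ with $K=R/M$ a field; determining which primes of $K[X]$ are disjoint from $A$ shows that $K[X]_A$ has exactly the two primes $(0)\subset (X)K[X]_A$, so it is a one-dimensional local localization of the PID $K[X]$, i.e.\ a discrete valuation ring (DVR). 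Since fields and DVRs are simultaneously principal ideal rings and valuation rings, every quotient of $R[X]_A$ by a prime has the required property, establishing $(1)$ and $(2)$ at once.

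The heart of the argument is $(4)\Rightarrow(5)$. Assuming $R[X]_A$ is residually Pr\"ufer, I would fix an arbitrary prime $P$ of $R$ and note that $PR[X]_A$ is prime (its quotient is a domain), so by Lemma \ref{LemResi2} the Pr\"ufer ring $R[X]_A/PR[X]_A$ is isomorphic to $D[X]_A$ with $D:=R/P$. It then suffices to prove the key claim: \emph{if $D$ is a domain and $D[X]_A$ is Pr\"ufer, then $D$ is a field}, since this forces every $R/P$ to be a field, i.e.\ $\dim R=0$. To prove the claim, suppose $D$ is not a field and choose a nonzero maximal ideal $M$ of $D$. Both $MD[X]$ and $XD[X]$ are primes of $D[X]$ disjoint from $A$ (their elements have constant term in $M$, respectively $0$, neither equal to $1$), hence extend to primes $MD[X]_A$ and $XD[X]_A$ of $D[X]_A$, both contained in the maximal ideal $(M+XD[X])_A$. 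Using $QD[X]_A\cap D[X]=Q$ for these primes, one checks they are incomparable: a nonzero element of $M$ lies in $MD[X]_A$ but not in $XD[X]_A$, while $X$ lies in $XD[X]_A$ but not in $MD[X]_A$. Thus a maximal ideal of $D[X]_A$ contains two incomparable primes, so $D[X]_A$ is not treed and therefore not Pr\"ufer, a contradiction.

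The main obstacle is exactly this key claim inside $(4)\Rightarrow(5)$: properties $(1)$–$(3)$ are strictly stronger than Pr\"ufer, so the real content is that even the weakest hypothesis, residual Pr\"uferness, already forces zero-dimensionality. The cleanest way to extract a contradiction is the \emph{treedness} obstruction above, since a Pr\"ufer domain is locally a valuation domain and hence has a linearly ordered set of primes below each maximal ideal, which the incomparable pair $MD[X]_A$, $XD[X]_A$ violates. The routine points (that these ideals are prime, disjoint from $A$, and incomparable) I would relegate to short computations, and I would double-check the degenerate case $M=(0)$ to confirm that no incomparable pair arises when $D$ is a field, consistent with $D[X]_A$ being a DVR.
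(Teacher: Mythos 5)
Your proposal is correct, and its skeleton matches the paper's: the same easy implications among (1)--(4), the heart at (4) $\Rightarrow$ (5) via Lemma \ref{LemResi2}, and (5) $\Rightarrow$ (1), (2) via the description of ${\rm Spec}(R[X]_A)$ for zero-dimensional $R$. The genuine difference is how the key fact about Anderson rings of domains is handled. The paper simply cites \cite[Corollary 3.11]{BL24} at both critical points: once to conclude that $(R/P)[X]_{A/PR[X]}$ Pr\"ufer forces $R/P$ to be a field, and once to conclude that $(R/M)[X]_{A/MR[X]}$ is both a PID and a valuation domain when $R/M$ is a field. You instead re-derive both ingredients from scratch: for (5) $\Rightarrow$ (1), (2) you identify $K[X]_A$ with the localization $K[X]_{(X)}$ (since every polynomial with nonzero constant term becomes a unit), hence a DVR; and for (4) $\Rightarrow$ (5) you exhibit, when $D$ is not a field, the incomparable primes $MD[X]_A$ and $XD[X]_A$ inside the maximal ideal $(M+XD[X])_A$, contradicting the fact that Pr\"ufer domains are treed. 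Your route is longer but self-contained, and it yields slightly more: the quotients of $R[X]_A$ at minimal primes are explicitly DVRs, and the failure of the Pr\"ufer property when $\dim(R)>0$ is witnessed by a concrete non-treed pair, so $D[X]_A$ for $D$ not a field fails even the weaker treed (hence going-down, locally divided) properties. The paper's route buys brevity by leaning on the companion paper \cite{BL24}.
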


\begin{proof}
The implications (1) $\Rightarrow$ (3) $\Rightarrow$ (4) and
(2) $\Rightarrow$ (3) $\Rightarrow$ (4) are obvious.

(4) $\Rightarrow$ (5)
Suppose that $R[X]_A$ is a residually Pr\"ufer ring.
Let $P \in {\rm Spec}(R)$.
Then $R[X]_A/PR[X]_A$ is a Pr\"ufer domain.
By Lemma \ref{LemResi2},
$(R/P)[X]_{A/PR[X]}$ is a Pr\"ufer domain.
Hence $R/P$ is a field \cite[Corollary 3.11]{BL24},
so $P$ is a maximal ideal.
Thus $R$ is a zero-dimensional ring.

Now, suppose that $R$ is zero-dimensional.
Then ${\rm Spec}(R[X]_A) = \{MR[X]_A \,|\, M \in {\rm Max}(R)\} \cup
\{(M+XR[X])_A \,|\, M \in {\rm Max}(R)\}$ \cite[Remark 2.5(2)]{BL24}.
Hence we only consider the ideals which are of the form $MR[X]_A$.
Note that $(R/M)[X]_{A/MR[X]}$ is both a PID and a valuation domain \cite[Corollary 3.11]{BL24}.
Hence $R[X]_A/MR[X]_A$ is both a PID and a valuation domain by Lemma \ref{LemResi2}.
Thus $R[X]_A$ is both a residually principal ideal ring and a residually valuation ring.
Consequently, the implications (5) $\Rightarrow$ (1) and
(5) $\Rightarrow$ (2) hold.
\end{proof}

At the end of this section,
we investigate specific ascending chain conditions for the Anderson rings.
Let $R$ be a commutative ring with identity and
let $n$ be a positive integer.
An ideal $I$ of $R$ is an {\it $n$-generated ideal} if
$I$ is generated by $n$ elements of $R$.
According to \cite{HL83},
$R$ has {\it $n$-ascending chain condition} if it satisfies the ascending 
chain condition on $n$-generated ideals. 
Notice that the $n$-ascending chain condition property does not generally
hold well under localization or passage to a polynomial ring 
(see examples of \cite[Section 4]{HL83}).

\begin{proposition}\label{nACC}
Let $R$ be a commutative ring with identity and
let $n$ be a fixed positive integer.
If $R[X]_A$ has $n$-ascending chain condition, then so is $R$.
\end{proposition}

\begin{proof}
Assume that $R[X]_A$ has $n$-ascending chain condition and
let $I_1 \subseteq I_2 \subseteq \cdots$ 
be a chain of $n$-generated ideals of $R$.
Then we have a chain
$I_1R[X]_A \subseteq I_2R[X]_A \subseteq \cdots$
of $n$-generated ideals of $R[X]_A$ \cite[Lemma 3.5]{BL24}.
Since $R[X]_A$ has $n$-ascending chain condition, 
there exists a positive integer $m \in \mathbb{N}$ such that
$I_mR[X]_A = I_{\ell}R[X]_A$ for all $\ell \geq m$.
Hence $I_m = I_{\ell}$ for all $\ell \geq m$ \cite[Lemma 4.1]{BL24}.
Thus $R$ has $n$-ascending chain condition.
\end{proof}

As a consequence of the previous result, we obtain

\begin{corollary}{\rm (\cite[Proposition 4.4]{B22})}\label{ACCP}
Let $R$ be a commutative ring with identity.
If $R[X]_A$ satisfies ascending chain condition on principal ideals, then so is $R$.
\end{corollary}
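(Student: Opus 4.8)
The plan is to recognize Corollary \ref{ACCP} as the special case $n=1$ of Proposition \ref{nACC}. Recall that a ring satisfies the ascending chain condition on principal ideals (ACCP) precisely when it satisfies the ascending chain condition on $1$-generated ideals, since a principal ideal is by definition a $1$-generated ideal. Thus the hypothesis that $R[X]_A$ satisfies ACCP is exactly the hypothesis that $R[X]_A$ has the $1$-ascending chain condition in the terminology of \cite{HL83}.

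Concretely, I would invoke Proposition \ref{nACC} with $n=1$. That proposition asserts that if $R[X]_A$ has the $n$-ascending chain condition then so does $R$; specializing to $n=1$ immediately yields that if $R[X]_A$ has the $1$-ascending chain condition then $R$ does as well. Translating back to the language of principal ideals, this says precisely that if $R[X]_A$ satisfies ACCP, then $R$ satisfies ACCP, which is the desired conclusion.

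Since this is a one-line deduction from the already-established Proposition \ref{nACC}, there is essentially no obstacle to overcome; the only point requiring a moment's care is the terminological identification between ACCP and the $1$-ascending chain condition. The proof therefore reads simply: \emph{This follows immediately from Proposition \ref{nACC} by taking $n=1$, since the ascending chain condition on principal ideals is exactly the $1$-ascending chain condition.}
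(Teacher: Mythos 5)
Your proof is correct and is exactly the paper's intended argument: the corollary is stated there as an immediate consequence of Proposition \ref{nACC}, obtained by taking $n=1$ and identifying ACCP with the $1$-ascending chain condition. Nothing further is needed.
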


\begin{proposition}
Let $R$ be a commutative ring with identity and 
let $({\rm P})$ denote the property of integral domains
satisfying the ascending chain condition on principal ideals.
If $R[X]_A$ is a residually $({\rm P})$ ring, then so is $R$.
\end{proposition}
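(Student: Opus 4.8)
The plan is to argue prime by prime and reduce to the non-residual descent already recorded in Corollary~\ref{ACCP}. I would first stress that one \emph{cannot} simply appeal to the uniform machinery of Remark~\ref{trivial residually}: that argument requires property (P) to be stable under localization (condition~(1) of Remark~\ref{trivial results}), whereas the ascending chain condition on principal ideals is not preserved under localization in general. A direct passage through the Anderson ring is therefore needed.

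Fix $P \in {\rm Spec}(R)$ and put $D = R/P$, an integral domain; the goal is to show that $D$ satisfies ACCP. By Lemma~\ref{LemResi2} we have $R[X]_A/PR[X]_A \cong (R/P)[X]_{A/PR[X]}$, and since the image of $A$ under the reduction map $R[X] \to (R/P)[X]$ is exactly $\{g \in D[X] \mid g(0) = 1\}$, this quotient is precisely the Anderson ring $D[X]_A$. As $D[X]_A$ is a localization of the domain $D[X]$, it is itself a domain, so $PR[X]_A$ is a prime ideal of $R[X]_A$. The hypothesis that $R[X]_A$ is residually (P) then gives that the domain $R[X]_A/PR[X]_A \cong D[X]_A$ satisfies ACCP. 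Applying Corollary~\ref{ACCP} to the ring $D$, it follows that $D = R/P$ satisfies ACCP. Since $P \in {\rm Spec}(R)$ was arbitrary, $R$ is residually (P).

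The crux here is conceptual rather than computational: one must notice that the uniform treatment of Remark~\ref{trivial residually} breaks down because ACCP does not localize, and that the correct substitute is the direct descent of Corollary~\ref{ACCP}, whose proof goes through the contraction identity $IR[X]_A \cap R = I$ (from \cite[Lemma 4.1]{BL24}) rather than through passage to the Nagata ring $R[X]_N$. Once this substitution is identified, the remaining steps are routine applications of Lemma~\ref{LemResi2} and Corollary~\ref{ACCP}.
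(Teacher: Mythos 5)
Your proof is correct and follows essentially the same route as the paper's: fix a prime $P$, use Lemma~\ref{LemResi2} to identify $R[X]_A/PR[X]_A$ with the Anderson ring of $R/P$, and then descend ACCP to $R/P$ via Corollary~\ref{ACCP}. Your additional care --- verifying that the image of $A$ in $(R/P)[X]$ is exactly the Anderson multiplicative set of $R/P$, and noting that Remark~\ref{trivial residually} cannot be invoked because ACCP need not survive localization --- only makes explicit what the paper's proof leaves implicit.
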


\begin{proof}
Assume that $R[X]_A$ is a residually $({\rm P})$ ring and let $P$ be a 
prime ideal of $R$. Then $PR[X]_A$ is a prime ideal of $R[X]_A$, 
and hence $R[X]_A/PR[X]_A$ satisfies ascending chain condition on principal ideals. 
By Lemma \ref{LemResi2}, $(R/P)[X]_{A/PR[X]}$
also satisfies ascending chain condition on principal ideals.
Therefore $R/P$ inherits this property according to Corollary \ref{ACCP}.
Consequently, $R$ is a residually (P) ring.
\end{proof}

From \cite{BH74}, we recall that a prime ideal $P$ of $D$ is called
an {\it associated prime of a principal ideal} $aD$ of $D$
if $P$ is minimal over $(aD:bD)$ for some $b\in D\setminus aD$.
For the sake of brevity, we call $P$ an \textit{associated prime ideal} of $D$,
denoted by $P \in {\rm Ass}(D)$,
if $P$ is an associated prime ideal of some principal ideal of $D$.
By the next result,
we can characterize the associated prime ideals of $D[X]_A$.

\begin{lemma}\label{PrAssPri}
Let $D$ be an integral domain.
Then the following two types are the only associated prime ideals of $D[X]_A$.
\begin{enumerate}
\item[(1)]
$PD[X]_A$ where $P \in {\rm Ass}(D)$, or
\item[(2)]
$\mathfrak{p}D[X]_A$, where $\mathfrak{p}$ is an upper to zero in $D[X]$ disjoint from $A$.
\end{enumerate}
\end{lemma}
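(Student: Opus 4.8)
The plan is to use the well-understood structure of $\mathrm{Spec}(D[X]_A)$ together with the flatness properties of the Anderson ring to transport associated-prime data between $D$ and $D[X]_A$. Recall from the introduction that the prime ideals of $D[X]_A$ are of two kinds: those extended from primes of $D$ (in particular $PD[X]_A$ for $P \in \mathrm{Spec}(D)$), and those lying over the zero ideal of $D$, which correspond to the primes of $D[X]$ disjoint from $A$ that contract to $0$ in $D$, namely uppers to zero $\mathfrak{p}$ disjoint from $A$ giving $\mathfrak{p}D[X]_A$. So every prime of $D[X]_A$ already has the shape (1) or (2), and the content of the lemma is precisely to determine \emph{which} of these are associated.

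First I would recall the description of associated primes via the formula $\mathrm{Ass}$: a prime $\mathfrak{q}$ of a domain is an associated prime of a principal ideal iff it is minimal over a colon ideal $(a : b)$ with $b \notin aD[X]_A$. The key tool is that $D[X]_A$ is a faithfully flat (indeed a localization of a polynomial) extension, so colon ideals of finitely generated (in particular principal) ideals behave well under extension: for $a,b \in D$ one has $(aD : bD)\,D[X]_A = (aD[X]_A : bD[X]_A)$, using $ID[X]_A \cap D = I$ from \cite{BL24} and the flatness statement that extension commutes with finite colons. From this I would show that an extended prime $PD[X]_A$ is minimal over such a colon ideal of $D[X]_A$ exactly when $P$ is minimal over the corresponding colon ideal of $D$, giving the equivalence $PD[X]_A \in \mathrm{Ass}(D[X]_A) \iff P \in \mathrm{Ass}(D)$, which is case (1).

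Next I would handle the primes lying over $0$. For an upper to zero $\mathfrak{p}$ in $D[X]$ disjoint from $A$, I want to show $\mathfrak{p}D[X]_A$ is always associated. Here I would pass to the quotient field $K$ of $D$: since $\mathfrak{p}$ is an upper to zero, $\mathfrak{p}K[X] = (\pi)$ is principal in the PID $K[X]$, so $\mathfrak{p}D[X]_A$ becomes principal, hence height-one, after the relevant localization; a height-one prime of a (locally) reasonable domain that is principal is minimal over a principal ideal, and one checks directly that $\mathfrak{p}D[X]_A$ is minimal over an appropriate colon $(a : b)$ built from a generator of $\mathfrak{p}$ and an element outside it, so $\mathfrak{p}D[X]_A \in \mathrm{Ass}(D[X]_A)$. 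The remaining task is to rule out that any \emph{other} prime — i.e.\ an extended prime $PD[X]_A$ with $P \notin \mathrm{Ass}(D)$, or a larger prime over a nonzero prime of $D$ — can be associated; this is handled by the colon-commutation argument above, which forces any associated prime of $D[X]_A$ to contract to an associated prime of $D$ or to an upper to zero.

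The main obstacle I expect is the colon-ideal commutation step, $(aD[X]_A : bD[X]_A) = (aD : bD)D[X]_A$, and more generally controlling how minimality over a colon ideal descends and ascends along the extension $D \hookrightarrow D[X]_A$. Flatness gives $\supseteq$ and the inclusion for finitely generated ideals, but verifying that \emph{minimality} of the prime is preserved in both directions — rather than merely the colon identity — requires care about localization at the prime and the fact (from \cite{BL24}) that contraction sets up a bijection on the relevant prime layers. I would organize the proof so that this lemma on colons is isolated and proved once, after which cases (1) and (2) follow by bookkeeping against the known $\mathrm{Spec}(D[X]_A)$.
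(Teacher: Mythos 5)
Your proposal contains a genuine gap, and it begins with the opening structural claim: it is \emph{not} true that every prime of $D[X]_A$ has the shape $PD[X]_A$ (extended from $D$) or $\mathfrak{p}D[X]_A$ (extended from an upper to zero). The primes of $D[X]_A$ correspond to the primes of $D[X]$ disjoint from $A$, and such a prime may contract to a nonzero $P \in \mathrm{Spec}(D)$ without being equal to $PD[X]$; for instance, every maximal ideal of $D[X]_A$ has the form $(M+XD[X])_A$ with $M \in \mathrm{Max}(D)$, which properly contains $MD[X]_A$ (note $X \notin MD[X]_A$). So the substantive content of the lemma is not to decide which primes of the two known shapes are associated, but to exclude all of these \emph{other} primes, such as $(M+XD[X])_A$.

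This is exactly where your ruling-out step fails. Your colon-commutation lemma $(aD:bD)D[X]_A=(aD[X]_A:bD[X]_A)$ concerns $a,b\in D$, but an associated prime $\mathfrak{q}$ of $D[X]_A$ is minimal over a colon $(\alpha:\beta)$ of arbitrary elements of $D[X]_A$; after clearing unit denominators these are colons $(f_1D[X]_A:f_2D[X]_A)$ of \emph{polynomials}, which your lemma does not reach. Moreover, the conclusion you aim for --- that any associated prime of $D[X]_A$ contracts in $D$ to an associated prime of $D$ or to $(0)$ --- is too weak even if proved: $MD[X]_A$ and $(M+XD[X])_A$ have the same contraction $M$ to $D$, so whenever $M \in \mathrm{Ass}(D)$ your criterion cannot rule out $(M+XD[X])_A$ from being associated. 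The missing idea is to contract to $D[X]$ rather than to $D$: since $D[X]_A$ is a localization of $D[X]$, one has $(f_1D[X]_A:f_2D[X]_A)=(f_1D[X]:f_2D[X])D[X]_A$, and the standard localization correspondence shows that $\mathfrak{q}\cap D[X]$ is minimal over $(f_1D[X]:f_2D[X])$, i.e., $\mathfrak{q}\cap D[X]\in\mathrm{Ass}(D[X])$ and $\mathfrak{q}=(\mathfrak{q}\cap D[X])D[X]_A$. One then needs the classification of associated primes of polynomial rings due to Brewer and Heinzer \cite[Corollary 8]{BH74}: $\mathrm{Ass}(D[X])$ consists precisely of the ideals $PD[X]$ with $P\in\mathrm{Ass}(D)$ together with the uppers to zero. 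This nontrivial input, which your proposal never invokes and which your direct $D\hookrightarrow D[X]_A$ comparison cannot replace, is what pins down the two shapes in the statement; it is the heart of the paper's proof (the positive direction, that both shapes are indeed associated, also follows from the same reference together with the fact that localization at a multiplicative set disjoint from an associated prime preserves the property).
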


\begin{proof}
Let $P$ be an associated prime ideal of $D$ and
let $\mathfrak{p}$ be an upper to zero in $D[X]$ disjoint from $A$.
Then $PD[X]_A$ and $\mathfrak{p}D[X]_A$ are  associated prime ideals of $D[X]_A$ by \cite[Lemma 1 and Corollary 8]{BH74}.
Let $\mathfrak{q}$ be a associated prime ideal of $D[X]_A$.
Then there exists $\frac{f_1}{g_1}, \frac{f_2}{g_2} \in D[X]_A$ such that
$\mathfrak{q}$ is minimal over $(\frac{f_1}{g_1} : \frac{f_2}{g_2})D[X]_A
= (f_1 : f_2)D[X]_A$.
Hence $\mathfrak{q} \cap D[X]$ is minimal over $(f_1:f_2)$,
which means that $\mathfrak{q} \cap D[X]$ is an associated prime ideal of $D[X]$.
Hence we obtain either $\mathfrak{q} \cap D[X] = PD[X]$ or $\mathfrak{q} \cap D[X] = \mathfrak{p}$,
where $P \in {\rm Ass}(D)$ and
$\mathfrak{p}$ is an upper to zero disjoint from $A$
\cite[Corollary 8]{BH74}.
Thus the proof is complete.
\end{proof}

Note that a similar result to the previous one can be obtained for $D[X]_N$ and $D[X]_U$.

Let $D$ be an integral domain. From \cite{KT22}, 
we recall that $D$ satisfies the ascending chain condition on associated 
prime ideals if any ascending chain of associated 
prime ideals of $D$ is stationary.
Notice that one-dimensional integral domains and (locally) 
Noetherian domains satisfy the ascending chain condition on associated prime ideals,
but the converse does not hold in general \cite[Remark 4.2]{KT22}.

\begin{proposition}
Let $D$ be an integral domain.
Then $D[X]_A$ satisfies the ascending chain condition on associated prime ideals if and only if
$D$ satisfies the ascending chain condition on associated prime ideals.
\end{proposition}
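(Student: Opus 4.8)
The plan is to reduce both implications to the classification of associated prime ideals of $D[X]_A$ provided by Lemma \ref{PrAssPri}, together with the contraction identity $ID[X]_A \cap D = I$ recorded in the introduction. The organizing device is, for each associated prime $\mathfrak{q}$ of $D[X]_A$, to track its contraction $\mathfrak{q} \cap D$: by Lemma \ref{PrAssPri}, either $\mathfrak{q} = PD[X]_A$ with $P \in {\rm Ass}(D)$, in which case $\mathfrak{q} \cap D = P$, or $\mathfrak{q} = \mathfrak{p}D[X]_A$ for an upper to zero $\mathfrak{p}$ in $D[X]$ disjoint from $A$, in which case $\mathfrak{q} \cap D = \mathfrak{p} \cap D = 0$. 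Thus contraction to $D$ sends associated primes of $D[X]_A$ into ${\rm Ass}(D) \cup \{0\}$, with the ``upper to zero'' type being precisely those contracting to $0$.

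For the (easier) forward implication, assume $D[X]_A$ satisfies the ascending chain condition on associated primes and take an ascending chain $P_1 \subseteq P_2 \subseteq \cdots$ in ${\rm Ass}(D)$. Extending termwise yields an ascending chain $P_1 D[X]_A \subseteq P_2 D[X]_A \subseteq \cdots$ of associated prime ideals of $D[X]_A$ by Lemma \ref{PrAssPri}(1), which stabilizes by hypothesis. Contracting back via $P_i D[X]_A \cap D = P_i$ then recovers stabilization of the original chain, so $D$ inherits the ascending chain condition on associated primes.

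For the converse, assume $D$ has the ascending chain condition on associated primes and take an ascending chain $\mathfrak{q}_1 \subseteq \mathfrak{q}_2 \subseteq \cdots$ of associated primes of $D[X]_A$. First I would contract: the chain $P_i := \mathfrak{q}_i \cap D$ lies in ${\rm Ass}(D) \cup \{0\}$ by the dichotomy above, and it stabilizes, since its nonzero terms form an ascending chain in ${\rm Ass}(D)$ to which the hypothesis applies. Fix an index $m$ past which $P_i = P$ is constant. If $P \neq 0$, then for $i \geq m$ every $\mathfrak{q}_i$ has nonzero contraction, so it must be of type (1); hence $\mathfrak{q}_i = P_i D[X]_A = PD[X]_A$ and the chain stabilizes. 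If $P = 0$, then for $i \geq m$ each $\mathfrak{q}_i$ is either the zero ideal or of the form $\mathfrak{p}D[X]_A$ with $\mathfrak{p}$ an upper to zero; since distinct uppers to zero are height one and pairwise incomparable, their localizations at $A$ cannot be strictly comparable, so such a chain can ascend at most one strict step before it must stabilize.

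The main obstacle is the $P = 0$ case of the converse: here the contracted chain carries no information, so stabilization must instead be extracted from the geometry of the uppers to zero. The crux is to confirm that no type-(2) associated prime can sit strictly below another associated prime while both still contract to $0$; this rests on the height-one property of uppers to zero in $D[X]$ being preserved after localizing at $A$ (so that $\mathfrak{p}D[X]_A \subsetneq \mathfrak{p}'D[X]_A$ would force $\mathfrak{p} \subsetneq \mathfrak{p}'$, contradicting height one). Once this incomparability is in place, the bookkeeping that merges the two cases is routine.
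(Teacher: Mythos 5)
Your proof is correct and takes essentially the same route as the paper: both directions rest on Lemma \ref{PrAssPri} together with the contraction identity $ID[X]_A \cap D = I$, with the converse reduced to the ACC in ${\rm Ass}(D)$ after separating the extended primes from the uppers to zero. If anything, your case analysis is more careful than the paper's, which simply asserts that every term of the chain from the third onward has the form $P_iD[X]_A$ with $P_i \in {\rm Ass}(D)$ (a claim that overlooks constant chains of upper-to-zero type and implicitly uses exactly the height-one incomparability argument you spell out).
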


\begin{proof}
We can prove the direct implication in a similar way to the proof of Proposition \ref{nACC},
using Lemma \ref{PrAssPri}.
For the converse,
suppose that $\mathcal{C}: \mathfrak{p}_1 \subseteq \mathfrak{p}_2 \subseteq \cdots$ is an ascending chain of
associated prime ideals of $D[X]_A$.
Then for each $i \geq 3$,
there exists an associated prime ideal $P_i$ of $D$ such that $\mathfrak{p}_i = P_iD[X]_A$ by Lemma \ref{PrAssPri}.
Hence we obtain the chain $P_3 \subseteq P_4 \subseteq \cdots$ of
associated prime ideals of $D$.
By our assumption, there exists a positive integer $n \geq 3$ such that
$P_n = P_{\ell}$ for all $\ell \geq n$.
This means that the chain $\mathcal{C}$ is stationary.
Thus $D[X]_A$ satisfies the ascending chain condition on associated prime ideals.
\end{proof}

\section{Jaffard-like domains arising from Anderson rings}\label{Sec3}

Let $D$ be an integral domain.
Recall that the {\it valuative dimension} of $D$, denoted by $\dim_v(D)$,
is defined to be the supremum of the Krull dimensions 
of the valuation overrings of $D$.
It is easy to show that
$\dim(D) \leq \dim_v(D)$ and $\dim_v(T) \leq \dim_v(D)$
for any overring $T$ of $D$.
Also, recall that $D$ is a {\it Jaffard domain} if
$\dim_v(D)$ is finite and $\dim_v(D) = \dim(D)$.
For instance, finite dimensional Noetherian domains and Pr\"ufer domains
are Jaffard domains.

In this section, we examine the conditions on $D$ under which
the Anderson rings become Jaffard-like domains.
To do this, we begin by evaluating the valuative dimension of the Anderson rings 
in terms of the valuative dimension of the base integral domain.
In particular, we derive an analogue of \cite[Proposition 2.4]{BL24}
for valuative dimensions.

\begin{lemma}\label{valuative dimension}
Let $D$ be an integral domain.
Then $\dim_v(D[X]_A)=1+\dim_v(D)$, and hence
$\dim_v(D[X]_A) = \dim_v(D[X])$.
Moreover, $D[X]$ is a Jaffard domain if and only if $D[X]_A$ is a Jaffard domain.
\end{lemma}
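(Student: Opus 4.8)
The plan is to establish the single identity $\dim_v(D[X]_A) = 1 + \dim_v(D)$; everything else follows by a short bookkeeping argument. First I would recall the classical theorem of Jaffard that $\dim_v(D[X]) = 1 + \dim_v(D)$ for any integral domain $D$. This immediately gives the second assertion $\dim_v(D[X]_A) = \dim_v(D[X])$ once the first equality is in hand, so the whole content lies in computing $\dim_v(D[X]_A)$.

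To compute $\dim_v(D[X]_A)$ I would exploit the fact that $D[X]_A$ sits between $D[X]$ and its overrings in a controlled way. The clean route uses the general inequalities for valuative dimension recorded just before the lemma: for any overring $T$ of $D[X]$ one has $\dim_v(T) \le \dim_v(D[X])$. Since $D[X]_A = D[X]_S$ for the multiplicative set $S = A$, it is an overring (indeed a localization) of $D[X]$, giving the upper bound $\dim_v(D[X]_A) \le \dim_v(D[X]) = 1 + \dim_v(D)$. For the reverse inequality I would produce a valuation overring of $D[X]_A$ of Krull dimension at least $1 + \dim_v(D)$. Starting from a valuation overring $V$ of $D$ with $\dim(V) = \dim_v(D)$ (or, if $\dim_v(D) = \infty$, of arbitrarily large finite dimension), I would build a valuation overring $W$ of $D[X]$ of dimension $1 + \dim(V)$ in the usual Jaffard manner — for instance the valuation associated to the composite of $V$ with the $X$-adic (Gaussian) valuation — and then check that $W$ is in fact an overring of $D[X]_A$, i.e. that $A$ is inverted in $W$. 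This last point is exactly where the specific structure of $A = \{f \mid f(0) = 1\}$ enters: an element of $A$ has constant term a unit, so it is not mapped into the maximal ideal of the relevant valuation, hence is invertible there.

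The main obstacle I anticipate is precisely this compatibility step: verifying that every valuation overring used to realize the dimension $1 + \dim_v(D)$ of $D[X]$ can be arranged to dominate $D[X]_A$ rather than merely $D[X]$. Equivalently, one must rule out the possibility that localizing at $A$ kills dimension that is present in $D[X]$. One expects this to succeed because localization can only remove primes meeting $A$, and the chains witnessing valuative dimension can be chosen to avoid $A$; concretely $A \cap (M + XD[X]) = \emptyset$ is false in general, so some care is needed, but the maximal chains relevant to $\dim_v$ pass through primes lying over a chain in $D$ together with one ``extra'' step for $X$, and these survive the localization. I would organize the argument so that the realizing valuation ring is built to have center disjoint from $A$ by construction.

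Once the key equality is proven, the Jaffard equivalence is immediate: by definition $D[X]$ is Jaffard iff $\dim(D[X]) = \dim_v(D[X]) < \infty$, and $D[X]_A$ is Jaffard iff $\dim(D[X]_A) = \dim_v(D[X]_A) < \infty$. Since $\dim(D[X]_A) = \dim(D[X])$ by \cite[Proposition 2.4]{BL24} and $\dim_v(D[X]_A) = \dim_v(D[X])$ by the equality just established, the two conditions coincide, and the finiteness hypotheses transfer as well. This final paragraph is purely formal and should require no further work beyond citing the dimension equality and the definition of a Jaffard domain.
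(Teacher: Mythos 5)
Your proposal is correct, and on the crucial lower bound $\dim_v(D[X]_A)\geq 1+\dim_v(D)$ it takes a genuinely different route from the paper. You build an explicit valuation overring: starting from a valuation overring $V$ of $D$ realizing $\dim_v(D)$ (with quotient field $K$), the composite of the $X$-adic valuation with $V$ is the valuation ring $W=\{h\in K[X]_{(X)} \mid h(0)\in V\}$, which has dimension $1+\dim(V)$, contains $D[X]$, and inverts every $g\in A$ since $g(0)=1$ lies outside the maximal ideal of $W$; hence $W\supseteq D[X]_A$ and the bound follows, with the infinite-dimensional case handled uniformly by taking $V$ of arbitrarily large dimension. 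The paper never constructs a valuation ring: it sandwiches $D[X]\subseteq D[X]_A\subseteq V[X]_{A_V}$ (the Anderson ring of $V$), computes $\dim_v(V[X]_{A_V})=1+\dim(V)$ indirectly --- the upper bound because the finite-dimensional valuation domain $V$ is Jaffard, so $\dim_v(V[X])=1+\dim(V)$ by Corollary 30.12 of Gilmer's book, and the lower bound from $\dim(V[X]_{A_V})=\dim(V[X])$, i.e.\ Proposition 2.4 of [BL24] --- and it disposes of the infinite case separately by citing $\dim_v(D)=\dim_v(D[X]_N)\leq\dim_v(D[X]_A)$ from [CEK97, Corollary 1.4]. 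Your argument is more self-contained (it needs only Jaffard's theorem $\dim_v(D[X])=1+\dim_v(D)$ and the standard composite construction, not the Krull-dimension formula for Anderson rings nor the Nagata-ring result), and in fact your $W$ contains the paper's $V[X]_{A_V}$, so it refines their sandwich by one more step; the paper's proof is shorter on the page because it recycles known results. One correction to your ``anticipated obstacle'' paragraph: your claim that ``$A\cap(M+XD[X])=\emptyset$ is false in general'' is itself false --- every element of $M+XD[X]$ has constant term in the proper ideal $M$, hence never equals $1$ at $X=0$, so this intersection is always empty (indeed this disjointness is exactly why $(M+XD[X])_A$ is a proper ideal of $D[X]_A$); the compatibility you feared is automatic, and your constant-term observation already dispatches it. The concluding Jaffard equivalence is handled identically in both proofs, via $\dim(D[X]_A)=\dim(D[X])$ together with the established equality of valuative dimensions.
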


\begin{proof}
Note that $\dim_v(D) = \dim_v(D[X]_N) \leq \dim_v(D[X]_A)$,
where the equality follows directly from \cite[Corollary 1.4]{CEK97},
so the desired equality holds when $\dim_v(D)$ is infinite.
Hence we may assume that $\dim_v(D)$ is finite. 
Let $V$ be a valuation overring of $D$ such that $\dim_v(D)=\dim(V)$.
We first claim that $\dim_v(V[X]_{A_V})=1 + \dim(V)$,
where $A_V = \{f \in V[X] \,|\, f(0) = 1\}$.
As a matter of fact,
since $V[X]_{A_V}$ is an overring of $V[X]$,
we have $\dim_v(V[X]_{A_V}) \leq \dim_v(V[X])$.
Since $V$ is a Jaffard domain,
we obtain that $\dim_v(V[X])=\dim(V[X])=1+\dim(V)$ \cite[Corollary 30.12]{gilmer book}.
This implies that $\dim_v(V[X]_{A_V}) \leq 1+\dim(V)$.
In addition, we have $\dim(V[X]) = \dim(V[X]_{A_V}) \leq \dim_v(V[X]_{A_V})$,
where the first equality follows directly from 
\cite[Proposition 2.4]{BL24}.
Hence $1+\dim(V) \leq \dim_v(V[X]_{A_V})$.
Therefore $\dim_v(V[X]_{A_V})=1+\dim(V)$.
As $D[X]_A$ lies between $D[X]$ and $V[X]_{A_V}$,
we have $\dim_v(V[X]_{A_V})\leq \dim_v(D[X]_A)\leq \dim_v(D[X])$.
Thus $\dim_v(D[X]_A)=1+\dim_v(D)$ since $\dim_v(D[X]) = 1 + \dim_v(D)$
\cite[Th\'eor\`eme 2, page 60]{J60}.
The remainder argument follows directly from
\cite[Th\'eor\`eme 2, page 60]{J60}.

For the remaining argument,
suppose that $D[X]$ is a Jaffard domain.
Then $\dim_v(D[X]_A) = \dim_v(D[X]) = \dim(D[X]) = \dim(D[X]_A)$.
Hence $D[X]_A$ is a Jaffard domain.
Similarly, the converse holds.
\end{proof}

Note that, as mentioned in \cite[Remark 1.3(c)]{ABDFK88},
$D$ is a Jaffard domain if and only if
$D[X]$ is a Jaffard domain with Krull dimension equal to $1+\dim(D)$.
In what follows, we provide a similar result for Anderson rings.

\begin{theorem}\label{TheoJaffard}
Let $D$ be an integral domain.
Then the following assertions are equivalent.
\begin{enumerate}
\item[(1)]
$D$ is a Jaffard domain.
\item[(2)]
$D[X]$ is a Jaffard domain with $\dim(D[X]) = 1+\dim(D)$.
\item[(3)]
$D[X]_A$ is a Jaffard domain with $\dim(D[X]) = 1+\dim(D)$.
\item[(4)]
$D[X]_U$ is a Jaffard domain with $\dim(D[X]) = 1+\dim(D)$.
\item[(5)]
$D[X]_N$ is a Jaffard domain with $\dim(D[X]) = 1+\dim(D)$.
\end{enumerate}
\end{theorem}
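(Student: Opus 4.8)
The plan is to establish the cycle of equivalences by leveraging the dimension formulas and the valuative-dimension computation from Lemma~\ref{valuative dimension}, treating all five statements as refinements of the single underlying fact that $D$ is Jaffard. First I would recall the two key inputs that the excerpt puts at our disposal: from the introduction, $\dim(D[X]_A)=\dim(D[X])$ while $\dim(D[X]_N)=\dim(D[X]_U)=\dim(D[X])-1$ for any finite-dimensional $D$; and from Lemma~\ref{valuative dimension}, $\dim_v(D[X]_A)=\dim_v(D[X])=1+\dim_v(D)$. The same valuative formula holds for the Nagata and Serre rings via \cite[Corollary 1.4]{CEK97}, namely $\dim_v(D[X]_N)=\dim_v(D[X]_U)=\dim_v(D)$.

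The cleanest route is to prove $(1)\Leftrightarrow(2)$ first and then show each of $(3)$, $(4)$, $(5)$ is equivalent to $(2)$, so that the common side condition $\dim(D[X])=1+\dim(D)$ threads through. The equivalence $(1)\Leftrightarrow(2)$ is exactly \cite[Remark 1.3(c)]{ABDFK88}, which I would simply cite. For $(2)\Leftrightarrow(3)$, observe that $\dim(D[X]_A)=\dim(D[X])$, so the side condition and the value of the Krull dimension transfer verbatim; moreover the final sentence of Lemma~\ref{valuative dimension} already records that $D[X]$ is Jaffard iff $D[X]_A$ is, so the Jaffard property itself transfers and the two statements are literally interchangeable. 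The more delicate equivalences are $(2)\Leftrightarrow(4)$ and $(2)\Leftrightarrow(5)$, where the Krull dimension drops by one but the valuative dimension does not. Here I would argue directly: assuming $(2)$, so that $D$ is Jaffard with $\dim(D[X])=1+\dim(D)$, I compute
\[
\dim_v(D[X]_N)=\dim_v(D)=\dim(D)=\dim(D[X])-1=\dim(D[X]_N),
\]
using the valuative formula, the Jaffard hypothesis on $D$, the side condition, and the Krull-dimension formula in turn; this shows $D[X]_N$ is Jaffard and the side condition persists, giving $(5)$. The identical chain with $U$ in place of $N$ gives $(4)$.

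For the converse directions $(5)\Rightarrow(2)$ and $(4)\Rightarrow(2)$, I would run the computation backwards: from $\dim(D[X]_N)=\dim_v(D[X]_N)$ together with $\dim(D[X]_N)=\dim(D[X])-1$ and $\dim_v(D[X]_N)=\dim_v(D)$, I recover $\dim(D[X])-1=\dim_v(D)$; combining this with the assumed side condition $\dim(D[X])=1+\dim(D)$ yields $\dim(D)=\dim_v(D)$, which is precisely the statement that $D$ is Jaffard, and the side condition is carried along unchanged, so $(2)$ holds. The main obstacle to watch is the finiteness bookkeeping: the Krull-dimension formulas $\dim(D[X]_N)=\dim(D[X])-1$ are stated for finite-dimensional $D$, so before invoking them I must confirm that the Jaffard hypothesis (in whichever statement I am starting from) forces $\dim_v(D)$, and hence $\dim(D)$ and $\dim(D[X])$, to be finite; the definition of Jaffard domain given in the excerpt builds finiteness of $\dim_v(D)$ into the hypothesis, and each of the side conditions $\dim(D[X])=1+\dim(D)$ tacitly presumes these quantities are finite, so this is a matter of stating it carefully rather than a genuine difficulty. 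Once finiteness is secured, every step is a one-line substitution into the two dimension formulas.
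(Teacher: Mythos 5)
Your proposal is correct, but it is organized differently from the paper's proof, most notably in how assertions (4) and (5) are handled. The paper proves the cycle $(1)\Rightarrow(2)\Rightarrow(3)\Rightarrow(1)$: it cites \cite[Remark 1.3(c)]{ABDFK88} for $(1)\Rightarrow(2)$, gets $(2)\Rightarrow(3)$ from Lemma~\ref{valuative dimension}, and closes the loop with the direct computation $\dim_v(D)=\dim_v(D[X]_A)-1=\dim(D[X]_A)-1=\dim(D[X])-1=\dim(D)$; the equivalences $(2)\Leftrightarrow(4)\Leftrightarrow(5)$ are simply outsourced to \cite[Proposition 2.2]{CEK97}. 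You instead use $(2)$ as a hub: $(1)\Leftrightarrow(2)$ by the same citation, $(2)\Leftrightarrow(3)$ via the ``moreover'' clause of Lemma~\ref{valuative dimension} together with $\dim(D[X]_A)=\dim(D[X])$, and---this is the genuine difference---you prove $(2)\Leftrightarrow(4)$ and $(2)\Leftrightarrow(5)$ by hand, substituting into the valuative formula $\dim_v(D[X]_N)=\dim_v(D[X]_U)=\dim_v(D)$ (from \cite[Corollary 1.4]{CEK97}, which the paper itself invokes for the Nagata ring inside the proof of Lemma~\ref{valuative dimension}) and the Krull-dimension formula $\dim(D[X]_N)=\dim(D[X]_U)=\dim(D[X])-1$ (Huckaba's results, quoted in the introduction). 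Your computations in both directions are correct, and your attention to the finiteness bookkeeping (the Jaffard hypothesis in force must be seen to make $\dim_v$, hence all Krull dimensions involved, finite before the dimension formulas may be applied) is exactly the point that needs care; the paper sidesteps it by citing the packaged equivalence. What your route buys is transparency and self-containedness: it exhibits explicitly that the drop by one in Krull dimension for $D[X]_N$ and $D[X]_U$ is exactly matched by the drop in valuative dimension, reducing the dependence on \cite[Proposition 2.2]{CEK97} to the more basic Corollary 1.4. What the paper's route buys is brevity. The one citation you should double-check is that \cite[Corollary 1.4]{CEK97} indeed covers both $D[X]_U$ and $D[X]_N$ (the paper quotes it only for the Nagata ring); since that reference is devoted precisely to the Serre conjecture ring, this is safe, but it is the load-bearing external fact in your argument.
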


\begin{proof}
The implication (1) $\Rightarrow$ (2)
is already proved in \cite[Remark 1.3(c)]{ABDFK88},
and the equivalences (2) $\Leftrightarrow$ (4) $\Leftrightarrow$ (5)
follow directly from \cite[Proposition 2.2]{CEK97}.
Also, the implication (2) $\Rightarrow$ (3) follows directly from Lemma \ref{valuative dimension}.
Hence it is enough to show the implication (3) $\Rightarrow$ (1).

 (3) $\Rightarrow$ (1)
Suppose that $D[X]_A$ is a Jaffard domain with $\dim(D[X]) = 1+\dim(D)$.
Then from Lemma \ref{valuative dimension} we obtain
\begin{eqnarray*}
\dim_v(D) &=& \dim_v(D[X]_A)-1\\
&=&\dim(D[X]_A)-1\\
&=&\dim(D[X])-1\\
&=&\dim(D).
\end{eqnarray*}
Thus $D$ is a Jaffard domain.   
\end{proof}

The following example shows that
we can construct a Jaffard domain which neither Pr\"ufer domain nor (locally) Noetherian domain issued from Anderson rings.

\begin{example}
{\rm
Consider $D = \mathbb{Z} + Y \mathbb{Q}[Y]$. It is known that
$D$ is a two-dimensional Pr\"ufer domain, and
then it is a Jaffard domain.
By Theorem \ref{TheoJaffard},
we obtain $D[X]_A$ is a Jaffard domain of Krull dimension three.
However, since $D$ is not a field,
$D[X]_A$ is not a Pr\"ufer domain \cite[Corollary 3.11]{BL24}.
In addition, since $D$ is not locally Noetherian,
it follows from \cite[Proposition 2.7]{BL24} that $D[X]_A$ is also not locally Noetherian
(recall the fact that locally Noetherian Pr\"ufer domains must be one-dimensional).

More generally, suppose that $D$ is a Pr\"ufer domain which is not a field.
Then $D[X]_A$ is a Jaffard domain that is not a Pr\"ufer domain
by using Theorem \ref{TheoJaffard} and \cite[Corollary 3.11]{BL24}, respectively.
}
\end{example}

\begin{remark}\label{RemJaffard}
{\rm
According to \cite[Proposition 1.2(a)]{ABDFK88},
if $D$ is a Jaffard domain, then $ D[X]$ is also a Jaffard domain.
However, the integral domain $D=k+Yk(Z)[Y]_{(Y)}$,
where $Y$ and $Z$ are two indeterminates over a field $k$,
is known to be a one-dimensional non Jaffard domain but $D[X]$ is a Jaffard domain
with $\dim(D[X])=3$ \cite[Example 3.6]{FK90}.
In this case, it follows from Lemma \ref{valuative dimension} and
Theorem \ref{TheoJaffard} that $D[X]_A$ is a Jaffard domain with $\dim(D[X]) \neq 1 + \dim(D)$.
}
\end{remark}

Let $D$ be an integral domain.
We say that $D$ is a {\it maximally Jaffard domain} if
$D_M$ is a Jaffard domain for any $M \in {\rm Max}(D)$.
It is known that for any integral domain $D$,
we have $\dim_v(D) = \sup\{\dim_v(D_M) \mid M \in \mathrm{Max}(D)\}$
\cite[Theorem 5.4.13]{WK16}.
From this, it easily follows that a maximally Jaffard domain
with finite valuative dimension must also be a Jaffard domain.
However, the integral domain $D$ constructed in Example 2.3 of \cite{FK04}
is Jaffard but not maximally Jaffard.

\begin{corollary}\label{maximally Jaffard 1}
Let $D$ be an integral domain with finite valuative dimension.
If $D$ is a maximally Jaffard domain,
then $D[X]_A$ is a maximally Jaffard domain with $\dim(D[X])=1+\dim(D)$.
\end{corollary}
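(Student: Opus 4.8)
The plan is to verify the defining condition of a maximally Jaffard domain directly, namely that $(D[X]_A)_{\mathfrak{M}}$ is a Jaffard domain for every maximal ideal $\mathfrak{M}$ of $D[X]_A$. By the one-to-one correspondence recalled before Theorem \ref{PM-ring}, every maximal ideal of $D[X]_A$ has the form $\mathfrak{M} = (M + XD[X])_A$ for a unique $M \in {\rm Max}(D)$, so it suffices to treat these. Since $D$ has finite valuative dimension, so does each $D_M$ (because $\dim_v(D_M) \leq \dim_v(D)$), and the hypothesis that $D$ is maximally Jaffard means precisely that each $D_M$ is a Jaffard domain.

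The heart of the argument is to identify this localization with an Anderson ring of $D_M$. Writing $S = D \setminus M$ and $A_{D_M} = \{f \in D_M[X] \mid f(0) = 1\}$, I claim that
\begin{equation*}
(D[X]_A)_{(M + XD[X])_A} \cong D_M[X]_{A_{D_M}}.
\end{equation*}
To see this, I would first note that localizing $D[X]_A$ at $(M+XD[X])_A$ realizes it as $D[X]_W$, where $W = \{f \in D[X] \mid f(0) \notin M\}$; indeed, the preimage in $D[X]$ of the complement of $(M+XD[X])_A$ is exactly $W$, and $A \subseteq W$, so the composite localization inverts precisely $W$. Since $S \subseteq W$ consists of constants, one may first invert $S$ to obtain $D_M[X] = S^{-1}D[X]$, and a routine check shows the image of $W$ is $W_M = \{g \in D_M[X] \mid g(0) \notin MD_M\}$, whence $D[X]_W = D_M[X]_{W_M}$. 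Finally, because $D_M$ is quasi-local, every $g \in W_M$ has a unit constant term, so $g$ is already a unit in $D_M[X]_{A_{D_M}}$ (write $g = g(0)\cdot(g(0)^{-1}g)$ with $g(0)^{-1}g \in A_{D_M}$); hence the further localization at $W_M$ is trivial and $D_M[X]_{W_M} = D_M[X]_{A_{D_M}}$, proving the claim.

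With the claim in hand the conclusion is immediate. Each $D_M$ is Jaffard, so by Theorem \ref{TheoJaffard} applied to the integral domain $D_M$ (the implication (1) $\Rightarrow$ (3)), its Anderson ring $D_M[X]_{A_{D_M}}$ is a Jaffard domain; by the isomorphism above, $(D[X]_A)_{\mathfrak{M}}$ is then Jaffard for every $\mathfrak{M} \in {\rm Max}(D[X]_A)$, so $D[X]_A$ is maximally Jaffard. For the dimension equality, the observation recorded in the text that a maximally Jaffard domain of finite valuative dimension is itself Jaffard shows that $D$ is a Jaffard domain, and then $\dim(D[X]) = 1 + \dim(D)$ follows from Theorem \ref{TheoJaffard}.

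I expect the main obstacle to be the localization identity $(D[X]_A)_{(M+XD[X])_A} \cong D_M[X]_{A_{D_M}}$, which is the Anderson-ring analogue of the familiar Nagata-ring formula $(D[X]_N)_{MD[X]_N} \cong D_M(X)$. The delicate point is tracking how the multiplicative set $A$ interacts with localization at $S$, and in particular verifying that inverting $W_M$ adds nothing because $D_M$ is quasi-local; once this bookkeeping is carried out correctly, everything else reduces to results already available in the excerpt.
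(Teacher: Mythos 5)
Your proof is correct and follows essentially the same route as the paper: identify the maximal ideals of $D[X]_A$ as $(M+XD[X])_A$ via the one-to-one correspondence, note that finite valuative dimension makes $D$ itself Jaffard (giving $\dim(D[X])=1+\dim(D)$), and apply Theorem \ref{TheoJaffard} to each Jaffard localization $D_M$ to conclude. The only difference is that you explicitly verify the identification $(D[X]_A)_{(M+XD[X])_A} \cong D_M[X]_{A_{D_M}}$, which the paper's proof uses implicitly without comment; your verification of it is correct.
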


\begin{proof}
Assume that $D$ is a maximally Jaffard domain.
Then $D$ is a Jaffard domain, and hence $\dim(D[X])=1+\dim(D)$
\cite[Remark 1.3(c)]{ABDFK88}.
Now, let $\mathfrak{m}$ be a maximal ideal of $ D[X]_A$.
We have that $\mathfrak{m}$ is of the form $(M+XD[X])_A$
for some maximal ideal $M$ of $D$ \cite[Theorem 2.1(2)]{BL24}.
Since $D_M$ is a Jaffard domain,
$D_M[X]_{A_M}$ is also a Jaffard domain by Theorem \ref{TheoJaffard}.
Thus $D[X]_A$ is a maximally Jaffard domain.
\end{proof}

Let $D$ be an integral domain and
let $S$ be a multiplicative subset of $D$.
Note that $(D_S)_{MD_S} = D_M$ for any maximal ideal $M$ of $D$. 
Hence we directly obtain the following

\begin{lemma}\label{maximally Jaffard lemma}
Let $D$ be an integral domain.
Then $D$ is a maximally Jaffard domain if and only if
$D_S$ is a maximally Jaffard domain for each
multiplicative subset $S$ of $D$.
\end{lemma}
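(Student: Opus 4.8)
The plan is to prove the two implications separately, reducing everything to the Jaffard property of the local rings $(D_S)_{\mathfrak{n}}$ at the maximal ideals $\mathfrak{n}$ of $D_S$. The reverse implication I would dispatch at once: since the trivial multiplicative subset $\{1\}$ satisfies $D_{\{1\}}=D$, the hypothesis that $D_S$ is maximally Jaffard for \emph{every} multiplicative subset $S$ specializes, upon taking $S=\{1\}$, to the statement that $D$ itself is maximally Jaffard. No computation is needed here.

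For the forward implication, assume $D$ is maximally Jaffard and fix a multiplicative subset $S$; I must verify that $(D_S)_{\mathfrak{n}}$ is a Jaffard domain for each $\mathfrak{n}\in\mathrm{Max}(D_S)$. First I would invoke the standard correspondence between the primes of $D_S$ and the primes of $D$ disjoint from $S$ to write $\mathfrak{n}=PD_S$ for a suitable prime $P$ of $D$, and then collapse the iterated localization to a single one, $(D_S)_{\mathfrak{n}}=(D_S)_{PD_S}=D_P$. This turns the problem into showing that $D_P$ is Jaffard. When $P$ is itself a maximal ideal $M$ of $D$, the displayed identity $(D_S)_{MD_S}=D_M$ closes the case instantly, because $D_M$ is Jaffard by the maximally Jaffard hypothesis.

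The main obstacle, and the step on which I would spend the most care, arises when the contraction $P$ of $\mathfrak{n}$ is \emph{not} a maximal ideal of $D$. Choosing a maximal ideal $M$ of $D$ with $P\subseteq M$, I would write $D_P=(D_M)_{PD_M}$, exhibiting $(D_S)_{\mathfrak{n}}$ as a localization of the Jaffard domain $D_M$; the crux is then to transfer the equality $\dim=\dim_v$ from $D_M$ down to this further localization. The identity $(D_S)_{MD_S}=D_M$ is the engine of the argument and settles exactly the maximal-prime case, so I would organize the write-up around it; establishing that the remaining maximal ideals of $D_S$ likewise yield Jaffard local rings — that is, that the maximally Jaffard hypothesis on $D$ already forces $D_P$ to be Jaffard for every prime $P$ occurring as a contraction of $\mathrm{Max}(D_S)$ — is the delicate point on which I expect the whole proof to hinge.
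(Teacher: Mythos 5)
Your reverse implication (take $S=\{1\}$) is fine, and your reduction of the forward implication is the right start: maximal ideals of $D_S$ have the form $PD_S$ with $P$ a prime of $D$ disjoint from $S$, and $(D_S)_{PD_S}=D_P$, so the case where $P$ is maximal in $D$ is immediate. But the proposal then stops exactly where the proof has to begin: you prove nothing when $P$ is \emph{not} maximal in $D$, and you say yourself that this is the point on which everything hinges. That case is not a marginal one — it is the entire content of the lemma. Indeed, taking $S=D\setminus P$ for an arbitrary prime $P$ shows that the forward implication is literally equivalent to the statement that every maximally Jaffard domain is locally Jaffard (i.e., $D_P$ is Jaffard for \emph{every} prime $P$). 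Your proposed repair — write $D_P=(D_M)_{PD_M}$ and ``transfer'' the equality $\dim=\dim_v$ from the Jaffard local ring $D_M$ down to its localization — is precisely the assertion that the Jaffard property passes to localizations, and this is known to fail in general: the paper itself recalls that the domain of \cite[Example 2.3]{FK04} is Jaffard but not maximally Jaffard, and the whole hierarchy Jaffard / maximally Jaffard / locally Jaffard studied in \cite{ABDFK88} and \cite{Ca90} exists because Jaffardness does not localize. So no soft localization argument of the kind you sketch can close the gap; what is needed (if it is true at all) is the nontrivial implication ``maximally Jaffard $\Rightarrow$ locally Jaffard,'' i.e., the coincidence of two classes that the paper elsewhere treats as distinct notions.

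For comparison, the paper's own justification is even thinner than your attempt: it consists solely of the identity $(D_S)_{MD_S}=D_M$ for $M\in\mathrm{Max}(D)$, followed by the assertion that the lemma is ``directly obtained.'' In other words, the paper silently treats every maximal ideal of $D_S$ as if it were extended from a maximal ideal of $D$, which is exactly the false step you isolated; you have matched the paper's argument step for step and, to your credit, flagged the case it omits. But flagging the hole is not filling it, and as written neither your proposal nor the paper proves the forward implication. Note, moreover, that the omitted case is the one actually used when the paper applies the lemma to pass from $D[X]_A$ to its localizations $D[X]_U$ and $D[X]_N$: there $X$ becomes a unit, while $X$ lies in every maximal ideal $(M+XD[X])_A$ of $D[X]_A$, so the maximal ideals of $D[X]_N$ contract to non-maximal primes of $D[X]_A$. (Those particular implications can instead be recovered by descending to the base ring $D$ via Theorem \ref{TheoJaffard}, which carries the dimension hypotheses along — that, rather than a localization identity, is the kind of argument a correct write-up would need.)
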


Now, we explore the converse of Corollary \ref{maximally Jaffard lemma}.
By Theorem \ref{TheoJaffard} and Lemma \ref{maximally Jaffard lemma},
we can easily obtain

\begin{proposition}
Let $D$ be an integral domain.
Then the following assertions are equivalent.
\begin{enumerate}
\item[(1)]
$D$ is a maximally Jaffard domain.
\item[(2)]
$D[X]_A$ is a maximally Jaffard domain with
$\dim(D_M[X]) = 1+\dim(D_M)$ for any $M \in {\rm Max}(D)$.
\item[(3)]
$D[X]_U$ is a maximally Jaffard domain with
$\dim(D_M[X]) = 1+ \dim(D_M)$ for any $M \in {\rm Max}(D)$.
\item[(4)]
$D[X]_N$ is a maximally Jaffard domain with
$\dim(D_M[X]) = 1+ \dim(D_M)$ for any $M \in {\rm Max}(D)$.
\end{enumerate}
\end{proposition}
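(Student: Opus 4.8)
The plan is to reduce each of the four assertions to a purely \emph{local} Jaffard condition on the quasi-local domains $D_M$, $M \in \mathrm{Max}(D)$, and then to apply Theorem \ref{TheoJaffard} fibrewise. By definition $D$ is maximally Jaffard exactly when $D_M$ is a Jaffard domain for every $M \in \mathrm{Max}(D)$, so the whole statement will follow once I translate the maximal Jaffardness of each of the three localized polynomial rings into the Jaffardness of the various $D_M$, with the extra dimension hypothesis supplying precisely the missing ingredient of Theorem \ref{TheoJaffard}.

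The key computation I would carry out first is the localization identity
\begin{equation*}
(D[X]_A)_{(M+XD[X])_A} = D_M[X]_{A_M}, \qquad A_M = \{g \in D_M[X] \mid g(0) = 1\}.
\end{equation*}
To see this, recall from \cite[Theorem 2.1(2)]{BL24} that every maximal ideal of $D[X]_A$ has the form $(M+XD[X])_A$ with $M \in \mathrm{Max}(D)$, and that its contraction to $D[X]$ is $M + XD[X]$. Hence localizing $D[X]_A$ at this maximal ideal inverts exactly the polynomials $f \in D[X]$ with $f(0) \notin M$, that is, it is the localization of $D[X]$ at the multiplicative set $T = \{f \in D[X] \mid f(0) \in D \setminus M\}$. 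Writing $f = f(0) + Xg$ shows each such $f$ factors in $D_M[X]$ as a unit of $D_M$ times an element of $A_M$, while conversely every element of $A_M$ becomes a unit of $T^{-1}D[X]$ after clearing denominators; this gives $T^{-1}D[X] = D_M[X]_{A_M}$. Since $D_M$ is quasi-local, \cite[Theorem 2.1(2)]{BL24} shows $D_M[X]_{A_M}$ has a unique maximal ideal, so it is quasi-local and is therefore Jaffard if and only if it is maximally Jaffard.

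With this identity the equivalence (1) $\Leftrightarrow$ (2) is immediate: $D[X]_A$ is maximally Jaffard exactly when each $(D[X]_A)_{(M+XD[X])_A} = D_M[X]_{A_M}$ is a Jaffard domain, and by Theorem \ref{TheoJaffard} applied to $D_M$ this happens—together with the displayed equalities $\dim(D_M[X]) = 1 + \dim(D_M)$—precisely when every $D_M$ is a Jaffard domain, i.e. when $D$ is maximally Jaffard; Lemma \ref{maximally Jaffard lemma} may be used to pass between the localizations and the base ring. For (2) $\Leftrightarrow$ (4) I would repeat the argument with the classical Nagata identity $(D[X]_N)_{MD[X]_N} = D_M(X) = D_M[X]_{N_M}$, which is again quasi-local, and apply the equivalence (2) $\Leftrightarrow$ (5) of Theorem \ref{TheoJaffard} fibrewise.

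The step I expect to be the main obstacle is the Serre-ring assertion (3). Unlike the Anderson and Nagata cases, the maximal spectrum of $D[X]_U$ is \emph{not} in bijection with $\mathrm{Max}(D)$ in general, so there is no clean description of $(D[X]_U)_{\mathfrak{m}}$ as the Serre ring of a localization of $D$. I would circumvent this by not localizing $D[X]_U$ directly at its maximal ideals but instead comparing $D[X]_U$ with $D[X]_N$ and $D[X]_A$, using that these rings share the relevant valuative and Krull dimensions through \cite[Proposition 2.2]{CEK97} (as already exploited in the proof of Theorem \ref{TheoJaffard}) and that maximal Jaffardness is detected on the finite-dimensional quasi-local pieces. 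Verifying that this comparison respects maximal Jaffardness—rather than merely global Jaffardness—is the delicate point, and is exactly where the hypotheses $\dim(D_M[X]) = 1 + \dim(D_M)$ are indispensable.
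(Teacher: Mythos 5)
Your treatment of assertions (1), (2) and (4) is correct and is essentially the paper's route: the paper likewise works fibrewise, identifying the localizations of $D[X]_A$ and $D[X]_N$ at their maximal ideals $(M+XD[X])_A$ and $MD[X]_N$ with $D_M[X]_{A_M}$ and $D_M[X]_{N_M}$, and then applying Theorem \ref{TheoJaffard} to each $D_M$ (your explicit verification of the identity $(D[X]_A)_{(M+XD[X])_A}=D_M[X]_{A_M}$ is a detail the paper leaves implicit, and your argument for it is fine). The genuine gap is assertion (3), and you flag it yourself: what you propose there --- ``comparing'' $D[X]_U$ with $D[X]_A$ and $D[X]_N$ through the dimension equalities of \cite{CEK97} and hoping that maximal Jaffardness is ``detected on the finite-dimensional quasi-local pieces'' --- is not an argument, and the obstacle you correctly identify (no bijection between ${\rm Max}(D[X]_U)$ and ${\rm Max}(D)$ in general) does block any fibrewise strategy for $D[X]_U$.

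The missing idea is already available in the paper's introduction: $(D[X]_A)[\frac{1}{X}]=D[X]_{\widetilde{U}}$ and $D[X]_{\widetilde{U}}\cong D[X]_U$, so $D[X]_U$ is, up to isomorphism, a \emph{localization} of $D[X]_A$; similarly, since $U\subseteq N$, the ring $D[X]_N$ is a localization of $D[X]_U$. Lemma \ref{maximally Jaffard lemma} states precisely that maximal Jaffardness passes to arbitrary localizations, and this requires no control over where the maximal ideals of the localized ring lie. Hence (2) $\Rightarrow$ (3) $\Rightarrow$ (4) is immediate --- the side condition $\dim(D_M[X])=1+\dim(D_M)$ is a statement about $D$ alone, so it carries along unchanged --- and the circle closes with your (4) $\Rightarrow$ (1) $\Rightarrow$ (2). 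This one observation is exactly how the paper disposes of the Serre conjecture ring; with it added, your proposal becomes a complete proof, and without it the equivalence of (3) with the rest is unproved.
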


\begin{proof}
(1) $\Rightarrow$ (2)
Suppose that $D$ is a maximally Jaffard domain.
Then $D_M$ is a Jaffard domain for any $M \in {\rm Max}(D)$.
Hence by Theorem \ref{TheoJaffard},
we obtain $D_M[X]_{A_M}$ is a Jaffard domain with
$\dim(D_M[X]) = 1+\dim(D_M)$.
Thus the assertion (2) holds.

The implications (2) $\Rightarrow$ (3) $\Rightarrow$ (4)
follows directly from Lemma \ref{maximally Jaffard lemma}.
 
(4) $\Rightarrow$ (1)
Assume that the assertion (4) holds.
Let $M$ be a maximal ideal of $D$.
Then $D_M[X]_{N_M}$ is a Jaffard domain,
so $D_M$ is a Jaffard domain by Theorem \ref{TheoJaffard}.
Thus $D$ is a maximally Jaffard domain.   
\end{proof}

We now turn our attention to locally Jaffard domains.
According to \cite{ABDFK88}, an integral domain $D$ 
is said to be a {\it locally Jaffard domain} if
$D_P$ is a Jaffard domain for any $P \in {\rm Spec}(D)$. 
It is important to note that a locally Jaffard domain 
with finite valuative dimension is a Jaffard 
domain \cite[Proposition 1.5(b)]{ABDFK88}.
First, we show that $D[X]$, $D[X]_A$, $D[X]_U$ and $D[X]_N$ are simultaneously locally Jaffard domains
for integral domains $D$ with finite Krull dimension.

\begin{lemma}\label{LemLocJaffard}
Let $D$ be an integral domain with finite $($Krull$)$ dimension.
Then the following assertions are equivalent.
\begin{enumerate}
\item[(1)] 
$D[X]$ is a locally Jaffard domain.
\item[(2)]
$D[X]_A$ is a locally Jaffard domain.
\item[(3)]
$D[X]_U$ is a locally Jaffard domain.
\item[(4)]
$D[X]_N$ is a locally Jaffard domain.
\end{enumerate}
\end{lemma}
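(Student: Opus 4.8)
The plan is to exhibit $D[X]_A$, $D[X]_U$ and $D[X]_N$ as localizations of $D[X]$ and to translate the locally Jaffard condition into the Jaffardness of the local rings $D[X]_Q$. Since each of these three rings is of the form $S^{-1}(D[X])$ and since $(S^{-1}R)_{\mathfrak q}=R_{\mathfrak q\cap R}$, a localization $T$ of $D[X]$ is locally Jaffard precisely when $D[X]_Q$ is a Jaffard domain for every prime $Q$ of $D[X]$ surviving in $T$, that is, with $Q\cap S=\emptyset$. The hypothesis that $D$ has finite Krull dimension is used throughout to keep all the rings and the relevant valuative dimensions finite, so that Jaffardness is not lost for trivial reasons.

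First I would record the easy implications. As $A\subseteq N$ and $U\subseteq N$, every prime of $D[X]$ surviving in $D[X]_N$ also survives in $D[X]_A$ and in $D[X]_U$, while all primes survive in $D[X]$ itself. Thus the class of primes whose localizations must be Jaffard only shrinks as we pass from $D[X]$ to $D[X]_A$ or $D[X]_U$ and then to $D[X]_N$, which gives at once $(1)\Rightarrow(2)\Rightarrow(4)$ and $(1)\Rightarrow(3)\Rightarrow(4)$.

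The heart of the matter is $(4)\Rightarrow(1)$. I would begin by pinning down the surviving primes: by the standard description of the spectrum of a Nagata ring (see \cite[Proposition 33.1]{gilmer book}), a prime $Q$ of $D[X]$ is disjoint from $N$ if and only if $Q=P[X]$ for some $P\in{\rm Spec}(D)$, and then $D[X]_{P[X]}=D_P(X)$. Combining $\dim(D_P(X))=\dim(D_P[X])-1$ with $\dim_v(D_P(X))=\dim_v(D_P)=\dim_v(D_P[X])-1$ (the latter by \cite[Corollary 1.4]{CEK97} and \cite[Th\'eor\`eme 2, page 60]{J60}), one sees that $D_P(X)$ is Jaffard if and only if $D_P[X]$ is Jaffard. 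Hence assertion $(4)$ is equivalent to the statement that $D_P[X]$ is a Jaffard domain for every $P\in{\rm Spec}(D)$.

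Granting this reformulation, let $Q$ be an arbitrary prime of $D[X]$ and set $P=Q\cap D$, so that $D[X]_Q=(D_P[X])_{QD_P[X]}$ and $QD_P[X]$ contracts to the maximal ideal $PD_P$ of the local ring $D_P$. Then $QD_P[X]$ is either $PD_P[X]$ (whence $D[X]_Q=D_P(X)$ is Jaffard) or a maximal ideal $PD_P[X]+(g)$ of $D_P[X]$. In the remaining case a routine consequence of the special chain theorem \cite{J60} is that $PD_P[X]+(g)$ has height $\dim(D_P[X])$, the largest possible; consequently
\begin{eqnarray*}
\dim(D[X]_Q) &=& \dim(D_P[X])\\
&=& \dim_v(D_P[X])\\
&\geq& \dim_v(D[X]_Q)\\
&\geq& \dim(D[X]_Q),
\end{eqnarray*}
where the second equality is the Jaffardness of $D_P[X]$ and the first inequality holds because $D[X]_Q$ is an overring of $D_P[X]$. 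All four quantities therefore agree and $D[X]_Q$ is Jaffard, so $D[X]$ is locally Jaffard. I expect the main obstacle to be exactly this last case: Jaffardness of the full polynomial ring $D_P[X]$ is \emph{not} inherited by arbitrary localizations, and it is only the height computation---forcing the relevant maximal ideals to have top height---that rescues it. Alternatively, the equivalence $(1)\Leftrightarrow(3)\Leftrightarrow(4)$ for the Serre and Nagata rings may be quoted from \cite{CEK97}, after which the sandwich above inserts $D[X]_A$ into the chain.
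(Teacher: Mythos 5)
Your proof is correct, but it follows a genuinely different route from the paper's. The paper disposes of $(1)\Leftrightarrow(3)\Leftrightarrow(4)$ by quoting \cite[Proposition 2.3]{CEK97}, and inserts $D[X]_A$ by the same localization observation you make (localizations of a locally Jaffard domain are locally Jaffard, and $D[X]_N$ is a localization of $D[X]_A$); the hard implication $(4)\Rightarrow(1)$ is never argued in the paper, only cited. You instead prove $(4)\Rightarrow(1)$ directly: from $(4)$ you extract Jaffardness of $D_P(X)=D[X]_{P[X]}$, hence of $D_P[X]$, for every $P\in{\rm Spec}(D)$ (via $\dim(D_P(X))=\dim(D_P[X])-1$ and $\dim_v(D_P(X))=\dim_v(D_P)=\dim_v(D_P[X])-1$), and then, for an arbitrary prime $Q$ of $D[X]$ lying over $P$, you run the dichotomy $QD_P[X]=PD_P[X]$ versus $QD_P[X]$ maximal over $PD_P$; in the latter case the special chain theorem gives ${\rm ht}(QD_P[X])=\dim(D_P[X])$, and the sandwich $\dim(D[X]_Q)=\dim_v(D_P[X])\geq\dim_v(D[X]_Q)\geq\dim(D[X]_Q)$ closes the argument. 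I checked these steps and they are sound: every maximal ideal of $D_P[X]$ lying over $PD_P$ does have top height, valuative dimension cannot increase on passing to an overring, and the finiteness needed for the Jaffard definition comes from $\dim(D)<\infty$ together with the assumed Jaffardness. Your approach buys self-containedness---in effect you reprove the relevant part of \cite{CEK97}---while the paper's buys brevity; note that your closing ``alternatively'' sentence is precisely the paper's proof.

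One assertion in your write-up is false, though it is not load-bearing. Citing \cite[Proposition 33.1]{gilmer book}, you claim that a prime $Q$ of $D[X]$ is disjoint from $N$ \emph{if and only if} $Q=P[X]$ for some $P\in{\rm Spec}(D)$. Gilmer's result describes only the \emph{maximal} ideals of $D[X]_N$; the surviving primes are exactly those contained in $M[X]$ for some $M\in{\rm Max}(D)$, and these need not be extended: an upper to zero contained in $M[X]$ survives in $D[X]_N$, and such primes exist, for instance, whenever $D$ is an F-ring (this is exactly what drives Proposition \ref{F-ring} of the paper). So your ``reformulation'' of $(4)$ cannot be justified as an equivalence by that description of the spectrum (it does turn out to be an equivalence, but only a posteriori, once the whole lemma is proved). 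Fortunately, your argument uses only the true direction: each $P[X]$ \emph{is} disjoint from $N$, so $(4)$ forces $D[X]_{P[X]}=D_P(X)$ to be Jaffard for every $P$. Restate the reformulation as this one-way implication---which is all that $(4)\Rightarrow(1)$ requires---and your proof is complete as written.
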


\begin{proof}
The equivalences (1) $\Leftrightarrow$ (3) $\Leftrightarrow$ (4)
are immediate from \cite[Proposition 2.3]{CEK97}.

The implications (1) $\Rightarrow$ (2) $\Rightarrow$ (3)
follow directly from the fact that
$D[X]_N$ is the quotient ring of $D[X]_A$ by $N$,
and the locally Jaffard property is preserved under
any quotient ring of itself.
\end{proof}

\begin{proposition}\label{PropoLocallyJaffard}
Let $D$ be an integral domain with finite $($Krull$)$ dimension.
Then the following assertions are equivalent.
\begin{enumerate}
\item[(1)]
$D$ is a locally Jaffard domain.
\item[(2)]
$D[X]$ is a locally Jaffard domain with 
$\dim(D_P[X]) = 1 +\dim(D_P) $ for every $P \in \mathrm{Spec}(D)$.
\item[(3)]
$D[X]_A$ is a locally Jaffard domain with 
$\dim(D_P[X]) = 1 +\dim(D_P) $ for every $P \in \mathrm{Spec}(D)$.
\item[(4)]
$D[X]_U$ is a locally Jaffard domain with 
$\dim(D_P[X]) = 1 + \dim(D_P) $ for every $P \in \mathrm{Spec}(D)$.
\item[(5)]
$D[X]_N$ is a locally Jaffard domain with 
$\dim(D_P[X]) = 1 + \dim(D_P) $ for every $P \in \mathrm{Spec}(D)$.
\end{enumerate}
\end{proposition}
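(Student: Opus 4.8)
The plan is to treat the four ``polynomial'' conditions as a single block and then tie them to (1). The first thing I would notice is that the dimension requirement $\dim(D_P[X]) = 1+\dim(D_P)$ occurring in (2)--(5) is a statement about $D$ alone, hence literally identical in all four items. Since $\dim(D)<\infty$, Lemma \ref{LemLocJaffard} says that the locally Jaffard property passes simultaneously among $D[X]$, $D[X]_A$, $D[X]_U$ and $D[X]_N$; combining this with the shared dimension clause gives (2) $\Leftrightarrow$ (3) $\Leftrightarrow$ (4) $\Leftrightarrow$ (5) at once. Everything then reduces to proving the single equivalence (1) $\Leftrightarrow$ (2).

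For (1) $\Rightarrow$ (2) I would verify the two clauses separately. If $D$ is locally Jaffard then $D_P$ is a Jaffard domain for every $P\in\mathrm{Spec}(D)$, so applying Theorem \ref{TheoJaffard} to $D_P$ immediately yields $\dim(D_P[X])=1+\dim(D_P)$, which is the dimension clause. For the locally Jaffard clause on $D[X]$ I would invoke the stability of the locally Jaffard property under adjunction of an indeterminate \cite[Proposition 1.2]{ABDFK88}; alternatively this can be proved in-house by localizing $D[X]$ at each of its primes $\mathfrak{q}$, splitting into the case $\mathfrak{q}=P[X]$ (where $(D[X])_{\mathfrak{q}}=D_P(X)$ and one uses $\dim_v(D_P(X))=\dim_v(D_P)$ from \cite[Corollary 1.4]{CEK97}) and the case of an upper to $P$ (handled by the valuative-dimension bound $\dim_v((D[X])_{\mathfrak{q}})\le 1+\dim_v(D_P)$).

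The substantive direction is (2) $\Rightarrow$ (1), and here I expect the dimension clause to do genuine work; indeed it cannot be dropped, since Remark \ref{RemJaffard} exhibits a one-dimensional non-Jaffard $D$ with $\dim(D[X])\ne 1+\dim(D)$, exactly the situation the dimension clause is designed to exclude. Fix $P\in\mathrm{Spec}(D)$. Because $D_P[X]=(D\setminus P)^{-1}D[X]$ is a localization of the locally Jaffard ring $D[X]$, and the locally Jaffard property is inherited by localizations---the primes of a localization form a subfamily of the original spectrum with unchanged local rings---the ring $D_P[X]$ is again locally Jaffard. By the dimension clause $\dim(D_P[X])=1+\dim(D_P)\le 1+\dim(D)<\infty$, so $D_P[X]$ has finite Krull dimension. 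The key observation is then that a locally Jaffard domain $R$ of finite Krull dimension is automatically Jaffard: for each $M\in\mathrm{Max}(R)$ the local ring $R_M$ is Jaffard, so $\dim_v(R_M)=\dim(R_M)$, and hence by \cite[Theorem 5.4.13]{WK16} we get $\dim_v(R)=\sup_M\dim_v(R_M)=\sup_M\dim(R_M)=\dim(R)<\infty$, i.e. $R$ is Jaffard. Applying this to $R=D_P[X]$ shows $D_P[X]$ is a Jaffard domain satisfying $\dim(D_P[X])=1+\dim(D_P)$, so Theorem \ref{TheoJaffard} forces $D_P$ to be Jaffard. As $P$ was arbitrary, $D$ is locally Jaffard, completing (2) $\Rightarrow$ (1).

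The main obstacle is precisely this last upgrade, ``locally Jaffard $+$ finite Krull dimension $\Rightarrow$ Jaffard,'' which strengthens \cite[Proposition 1.5(b)]{ABDFK88} by replacing the finite-valuative-dimension hypothesis with a finite-Krull-dimension one; the supremum formula \cite[Theorem 5.4.13]{WK16} is exactly what closes this gap and makes Theorem \ref{TheoJaffard} applicable at every localization. The only genuinely imported fact is the polynomial-extension stability of the locally Jaffard class used in (1) $\Rightarrow$ (2); all remaining steps are internal to the development above.
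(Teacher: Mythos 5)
Your proof is correct, and it closes the cycle by a genuinely different route than the paper. The reduction via Lemma \ref{LemLocJaffard} and the implication (1) $\Rightarrow$ (2) (polynomial stability of the locally Jaffard property plus Theorem \ref{TheoJaffard} for the dimension clause) coincide with the paper's argument, which cites \cite[Proposition 1(i)]{Ca90} for the stability fact rather than your reference. The difference is the return direction: the paper proves (5) $\Rightarrow$ (1), exploiting the Nagata ring's prime structure --- for $P \in \mathrm{Spec}(D)$ the ideal $PD[X]_N$ is prime with $(D[X]_N)_{PD[X]_N} = D_P[X]_{N_P}$, so a \emph{single} localization of the locally Jaffard ring $D[X]_N$ already produces a Jaffard domain satisfying the hypothesis of Theorem \ref{TheoJaffard}(5) for $D_P$. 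You instead prove (2) $\Rightarrow$ (1), where the corresponding localization $D_P[X]$ of $D[X]$ is only known to be \emph{locally} Jaffard, so you must bridge the gap with the upgrade ``locally Jaffard $+$ finite Krull dimension $\Rightarrow$ Jaffard,'' obtained from the supremum formula $\dim_v(R)=\sup_M \dim_v(R_M)$ of \cite[Theorem 5.4.13]{WK16}; this upgrade is valid (each $\dim_v(R_M)=\dim(R_M)\le\dim(R)$ forces $\dim_v(R)=\dim(R)<\infty$) and in fact sharpens the statement the paper records before Corollary \ref{maximally Jaffard 1}, where finiteness of the \emph{valuative} dimension is assumed. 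What each approach buys: the paper's is shorter, since the Nagata ring does the work and no auxiliary upgrade is needed; yours avoids the Nagata ring in the substantive direction, stays inside the polynomial ring, and isolates a reusable general fact about locally Jaffard domains of finite Krull dimension.
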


\begin{proof}
By applying Lemma \ref{LemLocJaffard}, it suffices to prove
the implications (1) $\Rightarrow$ (2) and (5) $\Rightarrow$ (1).
    
(1) $\Rightarrow$ (2)
Assume that $D$ is a locally Jaffard domain. 
Then $D[X]$ is a locally Jaffard domain \cite[Proposition 1(i)]{Ca90}.
Also, since $D_P$ is a Jaffard domain for any $P \in {\rm Spec}(D)$,
we obtain that $\dim(D_P[X]) = 1 + \dim(D_P)$
for any $P \in {\rm Spec}(D)$ by Theorem \ref{TheoJaffard}.
 
(5) $\Rightarrow$ (1)
Assume that $D[X]_N$ is a locally Jaffard domain
with $\dim(D_P[X]) = \dim(D_P)+1$ for every $P \in {\rm Spec}(D)$. 
Let $P$ be a prime ideal of $D$.
Then $PD[X]_N$ is a prime ideal of $D[X]_N$,
so $(D[X]_N)_{PD[X]_N}=D_P[X]_{N_P}$ is a Jaffard domain.
Since $\dim(D_P[X]) = 1 + \dim(D_P)$, 
it follows from Theorem \ref{TheoJaffard} that
$D_P$ is a Jaffard domain.
Thus $D$ is a locally Jaffard domain.
\end{proof}

In \cite{KT22}, the authors defined an integral domain $D$ to be a
{\it Mott-Zafrullah Jaffard domain} (in short, an MZ-Jaffard domain)
if $D_P$ is a Jaffard domain for any $P\in\mathrm{Ass}(D)$.
In the finite (Krull) dimensional setting,
it is clear that both locally essential domains and ($t$-)locally Jaffard domains 
are subclasses of $\mathrm{MZ}$-Jaffard domains.  
It is easy to check that in the case of one-dimensional integral domains,
the concepts of Jaffard domains, maximally Jaffard domains, 
locally Jaffard domains, and $\mathrm{MZ}$-Jaffard domains 
are coincident. As mentioned in \cite[Remark 2.6]{KT22}, it is 
possible for $D[X]$ to be an $\mathrm{MZ}$-Jaffard domain 
even when $D$ is not. Furthermore,  \cite[Example 2.3]{FK04} 
provides an example of a Jaffard domain that is not 
an $\mathrm{MZ}$-Jaffard domain.
At the end of this section,
we investigate the conditions on $D$ or $D[X]$ under which
$D[X]_A$ becomes an MZ-Jaffard domain.

\begin{lemma}\label{LemMZJaffard}
Let $D$ be an integral domain.
Then the following assertions are equivalent.
\begin{enumerate}
\item[(1)] 
$D[X]$ is an MZ-Jaffard domain.
\item[(2)]
$D[X]_A$ is an MZ-Jaffard domain.
\item[(3)]
$D[X]_U$ is an MZ-Jaffard domain.
\item[(4)]
$D[X]_N$ is an MZ-Jaffard domain.
\end{enumerate}
\end{lemma}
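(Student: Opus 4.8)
The plan is to show that, for each of the four rings $T\in\{D[X],\,D[X]_A,\,D[X]_U,\,D[X]_N\}$, the MZ-Jaffard property is equivalent to one and the same condition depending only on $D$, namely that $D[X]_{PD[X]}$ is a Jaffard domain for every $P\in\mathrm{Ass}(D)$; once this reduction is established the four equivalences follow at once. To carry it out I first enumerate $\mathrm{Ass}(T)$ in each case. For $T=D[X]_A$ this is precisely Lemma \ref{PrAssPri}: every associated prime is either $PD[X]_A$ with $P\in\mathrm{Ass}(D)$ (call these of \emph{type (1)}) or $\mathfrak{p}D[X]_A$ with $\mathfrak{p}$ an upper to zero in $D[X]$ disjoint from $A$ (of \emph{type (2)}). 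For $T=D[X]$, $D[X]_U$ and $D[X]_N$ I use the analogous characterizations noted immediately after Lemma \ref{PrAssPri}, so that in each case $\mathrm{Ass}(T)$ splits into the extensions of the members of $\mathrm{Ass}(D)$ (type (1)) and the extensions of the uppers to zero that survive in $T$ (type (2)).

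Next I dispose of the type (2) primes. If $\mathfrak{p}$ is an upper to zero then $\mathfrak{p}\cap D=0$, so inverting $D\setminus\{0\}$ turns $D[X]$ into $K[X]$ with $K=\mathrm{Frac}(D)$ and sends $\mathfrak{p}$ to a maximal ideal of the PID $K[X]$; hence $D[X]_{\mathfrak{p}}\cong K[X]_{\mathfrak{p}K[X]}$ is a DVR, that is, a one-dimensional Noetherian, and therefore Jaffard, domain. Using the standard localization fact that for a multiplicative set $S$ and a prime $Q$ of $S^{-1}R$ with contraction $\mathfrak{q}=Q\cap R$ one has $(S^{-1}R)_Q\cong R_{\mathfrak{q}}$, every type (2) prime of $D[X]_A$, $D[X]_U$ and $D[X]_N$ contracts to such a $\mathfrak{p}$ and so localizes to this same DVR. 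Consequently the localizations at type (2) associated primes are automatically Jaffard for all four rings, and they impose no condition whatsoever; only the type (1) primes can matter, and crucially this holds \emph{uniformly} even though the precise set of surviving uppers to zero differs from ring to ring.

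It remains to compare the type (1) localizations. For $P\in\mathrm{Ass}(D)$ the ideal $PD[X]$ is disjoint from each of $A$, $U$ and $N$: a member of $PD[X]\cap A$ would have constant term simultaneously equal to $1$ and lying in $P$, a member of $PD[X]\cap U$ would have leading coefficient $1\notin P$, and a member of $PD[X]\cap N$ would have content contained in $P\neq D$. Thus $PD[X]$ survives as a prime in each of the three localizations and contracts back to $PD[X]$ in $D[X]$, so the same contraction fact yields $(D[X])_{PD[X]}=(D[X]_A)_{PD[X]_A}=(D[X]_U)_{PD[X]_U}=(D[X]_N)_{PD[X]_N}=D[X]_{PD[X]}$. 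Therefore, for each of the four rings, being MZ-Jaffard is equivalent to the single assertion that $D[X]_{PD[X]}$ is a Jaffard domain for all $P\in\mathrm{Ass}(D)$, and the four conditions coincide.

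I expect the main technical point to be the case $T=D[X]_U$. Unlike $D[X]_A$ and $D[X]_N$, the Serre conjecture ring admits no clean bijection between its spectrum and that of $D$, so some care is needed to confirm both the associated-prime analogue of Lemma \ref{PrAssPri} and the identification $(D[X]_U)_{PD[X]_U}=D[X]_{PD[X]}$. However, both facts rest only on $U\subseteq N$ together with the disjointness of $U$ from $PD[X]$ verified above, so no genuinely new obstacle arises and the uniform reduction goes through for all four rings.
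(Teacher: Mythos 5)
Your proof is correct, but it is organized differently from the paper's. The paper proves the cycle $(1)\Rightarrow(2)\Rightarrow(3)\Rightarrow(4)\Rightarrow(1)$: the first three implications are immediate from the fact that the MZ-Jaffard property passes to localizations (\cite[Proposition 1.4]{KT22}), applied along the chain $D[X]\to D[X]_A\to D[X]_{\widetilde U}\cong D[X]_U\to D[X]_N$, and all the real work is concentrated in the single closing implication $(4)\Rightarrow(1)$, which uses exactly your ingredients: the Brewer--Heinzer description of ${\rm Ass}(D[X])$, the observation that localizing at an upper to zero gives a quasi-local PID (hence Jaffard), and the identification $D[X]_{\mathfrak p}=(D[X]_N)_{PD[X]_N}$ together with the fact that $PD[X]_N\in{\rm Ass}(D[X]_N)$ for $P\in{\rm Ass}(D)$ (\cite[Lemma 2.2]{KT22}). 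You instead prove a ``hub'' equivalence: each of the four conditions is equivalent to the single statement that $D[X]_{PD[X]}$ is Jaffard for every $P\in{\rm Ass}(D)$. This is a genuine trade-off. Your route is symmetric, avoids invoking the localization-stability of MZ-Jaffard domains, and makes explicit the pleasant phenomenon that the uppers to zero impose no condition at all; it also extracts a slightly sharper statement (an intrinsic criterion on $D$ to which all four rings are equivalent). The cost is that you need the full two-sided characterization of ${\rm Ass}(T)$ --- both that every associated prime is of type (1) or (2), and that every $PT$ with $P\in{\rm Ass}(D)$ is associated --- in all four rings, including $D[X]_U$, for which the paper only asserts the analogue of Lemma \ref{PrAssPri} in a remark. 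You correctly flag this as the delicate point, and your justification is sound: the proof of Lemma \ref{PrAssPri} (contraction of a minimal prime of a conductor ideal, plus \cite[Lemma 1 and Corollary 8]{BH74}) uses nothing about $A$ beyond its being a multiplicative subset of $D[X]$, so it applies verbatim to $U$ and $N$; the spectral pathologies of the Serre conjecture ring concern maximal ideals and are irrelevant here.
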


\begin{proof}
The implications (1) $\Rightarrow$ (2) $\Rightarrow$ (3) $\Rightarrow$ (4)
follow from \cite[Proposition 1.4]{KT22}.

(4) $\Rightarrow$ (1)
Assume that $D[X]_N$ is an MZ-Jaffard domain and
let $\mathfrak{p}$ be an associated prime ideal of $D[X]$. 
Set $P=\mathfrak{p}\cap D$.
If $P = (0)$, then $D[X]_\mathfrak{p}$ is a quasi-local PID,
and hence it is a Jaffard domain.
Now, assume that $P \neq (0)$.
By \cite[Corollary 8]{BH74}, we have $\mathfrak{p}=PD[X]$,
and then $D[X]_\mathfrak{p}=(D[X]_N)_{PD[X]_N}$ is a Jaffard domain
since $PD[X]_N$ is an associated prime ideal of $D[X]_N$
\cite[Lemma 2.2(b)]{KT22}.
Thus $D[X]$ is an $\mathrm{MZ}$-Jaffard domain.
\end{proof}

\begin{proposition}\label{PropoMZJaffard}
Let $D$ be an integral domain.
Then the following assertions are equivalent.
\begin{enumerate}
\item[(1)]
$D$ is an MZ-Jaffard domain.
\item[(2)]
$D[X]$ is an MZ-Jaffard domain with 
$\dim(D_P[X]) = 1 + \dim(D_P)$ for every $P \in \mathrm{Ass}(D)$.
\item[(3)]
$D[X]_A$ is an MZ-Jaffard domain with 
$\dim(D_P[X]) = 1 + \dim(D_P)$ for every $P \in \mathrm{Ass}(D)$.
\item[(4)]
$D[X]_U$ is an MZ-Jaffard domain with 
$\dim(D_P[X]) = 1 + \dim(D_P)$ for every $P \in \mathrm{Ass}(D)$.
\item[(5)]
$D[X]_N$ is an MZ-Jaffard domain with 
$\dim(D_P[X]) = 1 + \dim(D_P)$ for every $P \in \mathrm{Ass}(D)$.
\end{enumerate}
\end{proposition}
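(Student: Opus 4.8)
The plan is to reduce the five-fold equivalence to two implications and then to analyze each by localizing at associated primes, exactly in the spirit of the proof of Proposition \ref{PropoLocallyJaffard}. Since Lemma \ref{LemMZJaffard} already shows that the MZ-Jaffard clauses of (2), (3), (4), and (5) are mutually equivalent, and since the attached dimensional hypotheses ``$\dim(D_P[X]) = 1 + \dim(D_P)$ for every $P \in \mathrm{Ass}(D)$'' are literally identical in those four statements (they refer only to the localizations of the base ring $D$), the equivalences (2) $\Leftrightarrow$ (3) $\Leftrightarrow$ (4) $\Leftrightarrow$ (5) follow at once. It therefore suffices to prove (1) $\Rightarrow$ (2) and (5) $\Rightarrow$ (1).

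For (1) $\Rightarrow$ (2), I would invoke the description of the associated primes of $D[X]$ from \cite[Corollary 8]{BH74} (the same tool underlying Lemma \ref{PrAssPri}): each $\mathfrak{p} \in \mathrm{Ass}(D[X])$ is either an upper to zero or of the form $PD[X]$ with $P \in \mathrm{Ass}(D)$. In the first case $\mathfrak{p} \cap D = (0)$, so $D[X]_{\mathfrak{p}}$ is a localization of the PID $K[X]$ (with $K$ the quotient field of $D$) at a maximal ideal, hence a DVR and in particular a Jaffard domain. In the second case the key computation is the identification $D[X]_{PD[X]} = D_P[X]_{PD_P[X]} = D_P[X]_{N_P}$, where the last equality holds because the Nagata ring of the local domain $D_P$ is already quasi-local with maximal ideal $PD_P[X]_{N_P}$, so localizing it further at that ideal changes nothing. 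Since $D$ is MZ-Jaffard, $D_P$ is a Jaffard domain, and applying the implication (1) $\Rightarrow$ (5) of Theorem \ref{TheoJaffard} over the base ring $D_P$ yields simultaneously that $D_P[X]_{N_P}$ is Jaffard and that $\dim(D_P[X]) = 1 + \dim(D_P)$. This delivers both the MZ-Jaffard property of $D[X]$ and the required dimensional condition, so (2) holds.

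For (5) $\Rightarrow$ (1), I would run the same computation in reverse at each $P \in \mathrm{Ass}(D)$. By \cite[Lemma 2.2(b)]{KT22} the prime $PD[X]_N$ lies in $\mathrm{Ass}(D[X]_N)$, so the assumed MZ-Jaffard property of $D[X]_N$ makes $(D[X]_N)_{PD[X]_N}$ a Jaffard domain; using the identification above this ring equals $D[X]_{PD[X]} = D_P[X]_{N_P}$. Combining the assumed equality $\dim(D_P[X]) = 1 + \dim(D_P)$ with the implication (5) $\Rightarrow$ (1) of Theorem \ref{TheoJaffard} then forces $D_P$ to be Jaffard, and since this holds for every $P \in \mathrm{Ass}(D)$, the domain $D$ is MZ-Jaffard.

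The step I expect to be the main obstacle is the localization identity $(D[X]_N)_{PD[X]_N} = D_P[X]_{N_P}$ (equivalently $D[X]_{PD[X]} = D_P[X]_{N_P}$): one must verify that $N$ (respectively $N_P$) lies in the complement of $PD[X]$ (respectively $PD_P[X]$), so that the successive localizations telescope, and that the Nagata ring of the local domain $D_P$ coincides with its own localization at the maximal ideal. Once this bookkeeping is settled, both implications reduce cleanly to the already-established Theorem \ref{TheoJaffard} together with the associated-prime correspondences of \cite{BH74} and \cite{KT22}.
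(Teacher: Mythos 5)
Your proposal is correct, and its skeleton matches the paper's: reduce via Lemma~\ref{LemMZJaffard} to the two implications (1) $\Rightarrow$ (2) and (5) $\Rightarrow$ (1), prove (5) $\Rightarrow$ (1) by localizing at $PD[X]_N$ for $P \in \mathrm{Ass}(D)$ (using \cite[Lemma 2.2]{KT22} together with the identification $(D[X]_N)_{PD[X]_N} = D_P[X]_{N_P}$), and extract the dimension equalities from Theorem~\ref{TheoJaffard}. The one place you genuinely diverge is (1) $\Rightarrow$ (2): the paper simply cites \cite[Proposition 2.5]{KT22} for the transfer of the MZ-Jaffard property from $D$ to $D[X]$, whereas you re-derive that transfer from scratch via \cite[Corollary 8]{BH74}, splitting $\mathrm{Ass}(D[X])$ into uppers to zero (where $D[X]_{\mathfrak{p}}$ is a localization of $K[X]$, hence a DVR and Jaffard) and extended primes $PD[X]$ with $P \in \mathrm{Ass}(D)$ (where $D[X]_{PD[X]} = D_P[X]_{N_P}$ is Jaffard by Theorem~\ref{TheoJaffard} applied over the base $D_P$). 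Your version is self-contained --- it amounts to an independent proof of the cited result of \cite{KT22} --- at the cost of a short case analysis, while the paper's is terser but leans on the external reference. The step you flagged as the main obstacle is not one: for the quasi-local domain $D_P$ one has $N_P = D_P[X] \setminus PD_P[X]$ (a polynomial has unit content over a quasi-local ring exactly when some coefficient is a unit), and $N \subseteq D[X] \setminus PD[X]$ for the same reason, so the localizations telescope to give $(D[X]_N)_{PD[X]_N} = D[X]_{PD[X]} = D_P[X]_{N_P}$; indeed the paper uses this identity without comment in the proof of Proposition~\ref{PropoLocallyJaffard}.
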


\begin{proof}
By Lemma \ref{LemMZJaffard}, we only need to show 
(1) $\Rightarrow$ (2) and (5) $\Rightarrow$ (1).
    
(1) $\Rightarrow$ (2) Assume that $D$ is an $\mathrm{MZ}$-Jaffard domain.
Then $D[X]$ is an $\mathrm{MZ}$-Jaffard domain
by \cite[Proposition 2.5]{KT22}.
On the other hand, since $D_P$ is a Jaffard domain for any $P \in {\rm Ass}(D)$,
the equality $\dim(D_P[X]) = 1 + \dim(D_P)$ holds by Theorem \ref{TheoJaffard}.

The implication (5) $\Rightarrow$ (1) follows
similarly to the case of locally Jaffard domains,
using the fact that
if $P$ is an associated prime ideal of $D$,
then $PD[X]_N$ is an associated prime ideal of $D[X]_N$
\cite[Lemma 2.2(2)]{KT22}.
\end{proof}

\section{Specific integral domains related to star-operations arising from Anderson rings}\label{Sec4}

In this section, we investigate some specific integral domains related to star-operations.
To help readers better understand this section,
we have devoted the following subsection to reviewing
some definitions and notation related to star-operations.

\subsection{Preliminaries: definitions and notation}

We begin by recalling some of the most well-known nontrivial star-operations and related concepts.

Let $D$ be an integral domain with quotient field $K$.
Let ${\bf F}(D)$ be the set of nonzero fractional ideals of $D$.
For $I \in {\bf F}(D)$,
we define a fractional ideal $I^{-1}:= \{ x \in K \mid xI \subseteq D \}$.
The mapping on ${\bf F}(D)$ defined by
$I \mapsto I_v := (I^{-1})^{-1}$ is called the {\it $v$-operation} on $D$;
the mapping on ${\bf F}(D)$ defined by
$I \mapsto I_t:=\bigcup J_v$,
where $J$ ranges over the set of all nonzero finitely generated fractional ideals contained in $I$
is called the {\it $t$-operation} on $D$.
An ideal $J$ of $D$ is called a {\it Glaz-Vasconcelos ideal} (for short, {\it GV-ideal}),
and denoted by $J \in {\rm GV}(D)$, if
$J$ is finitely generated and $J^{-1} = D$.
For each $I \in {\bf F}(D)$,
the {$w$-envelope} of $I$ is the set $\{a \in I \otimes K \,|\, aJ \subseteq I \text{ for some }
J \in {\rm GV}(D)\}$, and denoted by $I_w$.
The mapping on ${\bf F}(D)$ defined by
$I \mapsto I_w$ is called the {\it $w$-operation} on $D$.
A nonzero ideal $I$ of $D$ is \textit{divisorial} (or $v$-\textit{ideal})
(respectively, $t$-\textit{ideal}, $w$-\textit{ideal}) if
$I_v=I$ (respectively, $I_t=I$, $I_w=I$).
In general, for $I \in {\bf F}(D)$,
we have the inclusions $I \subseteq I_w\subseteq I_t\subseteq I_v$,
and these inclusions may be strict, as proved in \cite[Proposition 1.2]{Mi03}.
Consequently, every $v$-ideal is a $t$-ideal, and every $t$-ideal is a $w$-ideal.
For $*=t$ or $w$,
a prime ideal is called a {\it prime $*$-ideal} if it is a $*$-ideal, and
a {\it maximal $*$-ideal} is a $*$-ideal that is maximal among the proper $*$-ideals.
Let $*$-${\rm Max}(D)$ be the set of all maximal $*$-ideals of $D$.
Notice that each height-one prime is a prime $t$-ideal,
$w$-$\mathrm{Max}(D)=t$-$\mathrm{Max}(D)$,
and $t$-$\mathrm{Max}(D)\neq\emptyset$ if $D$ is not a field.

Next, we state the definition of $t$-dimension.
Let $P$ be a prime $t$-ideal and consider a strictly descending chain of prime $t$-ideals
$P = P_0 \supsetneq P_1 \supsetneq \cdots \supsetneq P_{n-1} \supsetneq P_n = (0)$.
The supremum of such $n$ is called the {\it $t$-height} of $P$, and denoted by $t$-${\rm ht}(P)$.
The {\it $t$-$($Krull$)$ dimension} of $D$, denoted by $t \text{-dim}(D)$,
is then defined as  $t$-$\dim(D) = \sup\{t\text{-ht}(P) \,|\, P \in t\text{-Max}(D)\}$.
Clearly, the inequality $t$-$\dim(D)\leq\dim(D)$ always holds.
It is worth noting that
$t$-$\dim(D)=1$ if and only if $t$-$\mathrm{Max}(D)=X^1(D)$, where $X^1(D)$ denotes
the set of all height-one prime ideals of $D$.

Recall that an ideal $I$ of $D$ is {\it $t$-invertible} if $(II^{-1})_t = D$.
Using the concept of $t$-invertibility,
we can define the following two classes of integral domains.
\begin{enumerate}
\item[(1)]
$D$ is a {\it Pr\"ufer $v$-multiplication domain} (for short, {\it P$v$MD}) if
every finitely generated ideal is $t$-invertible.
\item[(2)]
$D$ is a {\it Krull domain} if
every nonzero ideal is $t$-invertible.
\end{enumerate}
Now, we provide the definitions of some Krull-like domains.
To facilitate the presentation of the remaining definitions,
we set some standard statements to avoid repetition.
Consider the following conditions:
\begin{itemize}
\item[\rm(i)] $D=\bigcap_{P\in X^1(D)}D_P$.
\item[\rm(ii)] $D_P$ is a valuation domain for each $P\in X^1(D)$.
\item[\rm(iii)] $D_P$ is a Noetherian domain for each $P\in X^1(D)$.
\item[\rm(iv)] Every nonzero element $x$ of $D$ belongs to only finitely many $P\in X^1(D)$.
\end{itemize}

Based on these statements, we define the following:

\begin{enumerate}
\item $D$ is a \textit{weakly Krull domain} if it satisfies \rm(i) and  \rm(iv).
\item $D$ is an \textit{infra-Krull domain} if it satisfies \rm(i), \rm(iii) and  \rm(iv).
\item $D$ is a \textit{generalized Krull domain} (in the sense of Gilmer \cite{gilmer book})
if it satisfies \rm(i), \rm(ii) and  \rm(iv).
\end{enumerate}

Notable examples of P$v$MDs include (generalized) Krull domains and Pr\"ufer domains.
Note that if $D$ is a Krull domain,
then $D$ is both a generalized Krull domain and an infra-Krull domain; and
if $D$ is either a generalized Krull domain or an infra-Krull domain,
then $D$ is a weakly Krull domain.

In \cite{EB02}, El Baghdadi defined an integral domain $D$ to be 
a {\it generalized Krull domain} if
it is a strongly discrete P$v$MD 
({\it i.e.}, $D_M$ is a strongly discrete valuation domain for every maximal $t$-ideal $M$ of $D$)
and every principal ideal has only finitely many minimal prime ideals.
Equivalently, $D$ is a P$v$MD and
for each prime $t$-ideal $P$ of $D$,
there exists a finitely generated ideal $J$ of $D$ such that $P = \sqrt{J_t}$ and $P \neq (P^2)_t$.
For the sake of avoiding confusion, we will use the abbreviation {\it GK-domain}
to refer to generalized Krull domains in the sense of El Baghdadi.
It is important to point out that the definitions of (classical)
generalized Krull domains (in the sense of Gilmer) and GK-domains  do not imply each other.
In fact, it is known that the $t$-dimension of a GK-domain
is not necessarily equal to $1$ as asserted in \cite[page 3740]{EB02},
while the $t$-dimension of a generalized Krull domain is always $1$.
Moreover, any generalized Krull domain that is not a Krull domain cannot be a GK-domain,
because a GK-domain with $t$-dimension one must be a Krull domain \cite[Theorem 3.11]{EB02}.

Following \cite{KP06}, a nonzero ideal $I$ of $D$ is called a 
$t$-\textit{SFT}-\textit{ideal}
if there exist a finitely generated ideal $J \subseteq I$ 
and a positive integer $n$ such that $a^n \in J_v$ for every 
$a \in I_t$. The integral domain $D$ is called a $t$-\textit{SFT}-\textit{ring} 
if every nonzero ideal of $D$ is a $t$-SFT-ideal. 
It is worth noting that Mori domains serve as examples of $t$-SFT rings
(recall that $D$ is a {\it Mori domain} if
it satisfies ascending chain condition on (integral) divisorial ideals).

Lastly, $D$ is a $v$-\textit{domain} if
every nonzero finitely generated ideal $I$ of $D$ is $v$-invertible,
{\it i.e.}, $(II^{-1})_v=D$ for every nonzero finitely generated ideal $I$ of $D$.
Notice that $v$-domains lie between
completely integrally closed domains and integrally closed domains,
and essential domains are $v$-domains.
In addition, $D$ is called a {\it locally $v$-domain}
if $D_P$ is a $v$-domain for any $P \in {\rm Spec}(D)$.
It is clear that locally essential domains are locally $v$-domains.
Notably, while every locally $v$-domain is a $v$-domain,
the converse is not true
(see Proposition 3.1 and the example provided in Section 3 of \cite{FZ11}).
The class of locally $v$-domains coincides with that of super $v$-domains
as introduced in \cite{Z22}
(recall that $D$ is a {\it super $v$-domain} if
$D_S$ is a $v$-domain for any multiplicative subset $S$ of $D$).

\subsection{Main results and applications}

We start this subsection with the following observation on the relationship between the prime $t$-ideals of $D$ and those of $D[X]_A$.

\begin{remark}\label{section begin remark}
{\rm
Let $D$ be an integral domain.
Assume that $\mathfrak{p}$ is a prime $t$-ideal of $D[X]_A$ such that $\mathfrak{p} \cap D\neq(0)$.
For the sake of simplicity, we set $P := \mathfrak{p} \cap D$.
Since $PD[X]_A \subseteq \mathfrak{p}$,
$(PD[X]_A)_t \subseteq \mathfrak{p}_t = \mathfrak{p}$.
From \cite[Proposition 4.2(3)]{BL24}, 
we deduce that $P_tD[X]_A \cap D \subseteq \mathfrak{p} \cap D = P$.
This implies that $P_t \subseteq P$,
so $P$ is a prime $t$-ideal of $D$.
Thus for any prime $t$-ideal $\mathfrak{p}$ of $D[X]_A$ with $\mathfrak{p} \cap D \neq (0)$,
$\mathfrak{p} \cap D$ is a prime $t$-ideal of $D$.
}
\end{remark}

This remark allows us to consider the possibility of comparing the $t$-dimension of $D$ and that of $D[X]_A$.
Hence we first present the following introductory example.

\begin{example}\label{t-dim example 1}
{\rm
Let $D$ be a one-dimensional quasi-local Noetherian domain which is not integrally closed
(e.g. $\mathbb{F}_2[\![Y^2,Y^3]\!]$ or $\mathbb{F}_2 + Y \mathbb{F}_4[\![Y]\!]$,
where $\mathbb{F}_2$ (respectively, $\mathbb{F}_4$) is a finite field with two elements (respectively, four elements)).
It is clear that $t\mhyphen\dim(D) = \dim(D) = 1$.
Now, recall that $t\mhyphen\dim(D[X]_A) \leq \dim(D[X]_A)$, and
$\dim(D[X]) = 1 + \dim(D)$ since $D$ is a Noetherian domain.
This implies that
$t\mhyphen\dim(D[X]_A) \leq \dim(D[X]_A) = \dim(D[X]) = 1 + \dim(D) =2$,
and thus either $t\mhyphen\dim(D[X]_A) = 1$ or $t\mhyphen\dim(D[X]_A) = 2$.
}
\end{example}

Now, to accurately obtain the $t$-dimension of $D[X]_A$ in Example \ref{t-dim example 1},
we investigate some facts about the $t$-dimension of the Anderson ring.

\begin{lemma}\label{t-Dim 0}
Let $D$ be an integral domain.
Then $t\mhyphen\dim(D) \leq t\mhyphen\dim(D[X]_A) \leq
t\mhyphen\dim(D[X])$.
\end{lemma}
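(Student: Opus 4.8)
The plan is to exploit the fact that $D[X]_A$ is nothing but the localization $(D[X])_A$ of the polynomial ring $D[X]$ at the multiplicative set $A$, and to reduce both inequalities to a general principle describing how prime $t$-ideals behave under localization. Throughout I would use that $t\mhyphen\dim(D)$ equals the supremum of the lengths $n$ of strictly descending chains of prime $t$-ideals $P_0\supsetneq P_1\supsetneq\cdots\supsetneq P_n=(0)$, since every prime $t$-ideal is contained in a maximal $t$-ideal.

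The technical core I would establish first is a localization lemma for an arbitrary domain $T$ and multiplicative set $S\subseteq T$: (a) if $\mathfrak{Q}$ is a prime $t$-ideal of $T_S$, then $\mathfrak{Q}\cap T$ is a prime $t$-ideal of $T$; and (b) if $Q$ is a prime $t$-ideal of $T$ with $Q\cap S=\emptyset$, then $QT_S$ is a prime $t$-ideal of $T_S$. Both parts rest on the standard commutation $(JT_S)_v=J_vT_S$, valid for every finitely generated ideal $J$ of $T$. For (a), given a finitely generated $J\subseteq \mathfrak{Q}\cap T$ one has $JT_S\subseteq\mathfrak{Q}$, so $J_vT_S=(JT_S)_v\subseteq\mathfrak{Q}$ and hence $J_v\subseteq\mathfrak{Q}\cap T$; taking the union over all such $J$ yields $(\mathfrak{Q}\cap T)_t=\mathfrak{Q}\cap T$. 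For (b) one clears denominators to write any finitely generated ideal of $T_S$ contained in $QT_S$ as $JT_S$ with $J\subseteq Q$ finitely generated, and the same commutation gives $(QT_S)_t\subseteq QT_S$. (Alternatively, part (a) in the present setting is essentially the content of Remark~\ref{section begin remark} combined with \cite[Proposition 4.2(3)]{BL24}.) I expect the verification of $(JT_S)_v=J_vT_S$ and the clearing-of-denominators bookkeeping in (b) to be the main technical obstacle, everything else being formal.

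With the lemma in hand, I would prove the right-hand inequality by taking a strictly descending chain of prime $t$-ideals $\mathfrak{p}_0\supsetneq\cdots\supsetneq\mathfrak{p}_m=(0)$ in $D[X]_A=(D[X])_A$ and contracting it to $D[X]$. Since contraction along a localization is strictly order-preserving and injective, and since each $\mathfrak{p}_i\cap D[X]$ is a prime $t$-ideal of $D[X]$ by part (a), this produces a chain of prime $t$-ideals of $D[X]$ of the same length $m$; hence $t\mhyphen\dim(D[X]_A)\le t\mhyphen\dim(D[X])$.

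For the left-hand inequality I would start from a chain of prime $t$-ideals $P_0\supsetneq\cdots\supsetneq P_n=(0)$ in $D$. Using the well-known fact that $P_i[X]=P_iD[X]$ is a prime $t$-ideal of $D[X]$, together with the observation that each $P_iD[X]$ is disjoint from $A$ (a polynomial in $P_iD[X]$ has constant term in the proper ideal $P_i$, so it cannot lie in $A$), part (b) of the lemma shows that each $P_iD[X]_A$ is a prime $t$-ideal of $D[X]_A$. The faithfully flat identity $P_iD[X]_A\cap D=P_i$ (\cite[Lemma 4.1]{BL24}) guarantees that the extended chain $P_0D[X]_A\supsetneq\cdots\supsetneq P_nD[X]_A=(0)$ remains strict and of length $n$, so $t\mhyphen\dim(D)\le t\mhyphen\dim(D[X]_A)$, completing the proof.
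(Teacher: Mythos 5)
Your right-hand inequality is sound and is essentially the paper's own argument: you contract a chain of prime $t$-ideals of $D[X]_A$ to $D[X]$ and use the fact that contractions of prime $t$-ideals along a localization are prime $t$-ideals; this is exactly what the paper cites from \cite[Lemma 1.15]{GaRo90}. Note, however, that your part (a) only needs the inclusion $J_vT_S\subseteq (JT_S)_v$, which is the direction of your ``commutation'' that is actually valid for finitely generated $J$ (one has $(JT_S)^{-1}=J^{-1}T_S$ for finitely generated $J$, but $J^{-1}$ need not be finitely generated, so inverting again only gives an inclusion).

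The left-hand inequality has a genuine gap. Your part (b) --- for an arbitrary domain $T$, multiplicative set $S$, and prime $t$-ideal $Q$ of $T$ disjoint from $S$, the extension $QT_S$ is again a $t$-ideal --- is false in general. The equality $(JT_S)_v=J_vT_S$ on which your clearing-of-denominators argument rests holds only in the direction $J_vT_S\subseteq (JT_S)_v$; the reverse inclusion is precisely what (b) needs, and it can fail. Indeed, this is the classical phenomenon of prime $t$-ideals that are not ``well behaved'': M. Zafrullah (\emph{Well behaved prime $t$-ideals}, J. Pure Appl. Algebra 65 (1990) 199--207) constructs a domain $D$ with a prime $t$-ideal $P$ such that $PD_P$ is not a $t$-ideal of $D_P$, a direct counterexample to (b) with $S=D\setminus P$. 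Consequently, you cannot pass from ``$P_iD[X]$ is a prime $t$-ideal of $D[X]$'' to ``$P_iD[X]_A$ is a prime $t$-ideal of $D[X]_A$'' by a general localization principle. What is true, and what the paper uses instead, is a result specific to the Anderson ring: by \cite[Corollary 4.3]{BL24} (resting on the commutation of the $t$-operation with extension from $D$ to $D[X]_A$ established in \cite[Proposition 4.2]{BL24}, a content-type fact about the multiplicative set $A$, not about localization in general), a strict chain of prime $t$-ideals of $D$ extends to a strict chain of prime $t$-ideals of $D[X]_A$; strictness can also be seen, as you say, from $P_iD[X]_A\cap D=P_i$. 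So the right half of your proof stands, but the left half must be replaced by this construction-specific input; the general lemma you propose cannot be repaired.
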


\begin{proof}
Suppose that $t\mhyphen\dim(D) = n$.
Then we have the chain $(0) \subsetneq P_1 \subsetneq \cdots \subsetneq P_n$ of
prime $t$-ideals of $D$.
According to \cite[Corollary 4.3]{BL24},
we obtain the chain $(0) \subsetneq P_1D[X]_A \subsetneq \cdots \subsetneq P_nD[X]_A$ of
prime $t$-ideals of $D[X]_A$.
This implies that $t\mhyphen\dim(D[X]_A) \geq n$, 
and this proves the left inequality. 
For the right inequality, consider a chain 
$(0) \subsetneq \mathfrak{p}_1 \subsetneq \cdots \subsetneq \mathfrak{p}_n$ 
of prime $t$-ideals of $D[X]_A$. 
Then it follows from \cite[Lemma 1.15]{GaRo90} that $(0) \subsetneq \mathfrak{p}_1 \cap D[X] \subsetneq \cdots \subsetneq \mathfrak{p}_n \cap D[X]$  is a chain of prime $t$-ideals of $D[X]$.
Hence $t\mhyphen\dim(D[X]_A) \leq t\mhyphen\dim(D[X])$.
\end{proof}

Recall that $D$ is a {\it UMT-domain} if
every upper to zero in $D[X]$ is a maximal $t$-ideal of $D[X]$. 
It is worth noting that P$v$MDs and one $t$-dimensional strong Mori 
domains are subclasses of UMT-domains \cite[Theorems 3.3 and 3.9]{Chang2020}
(recall that $D$ is a {\it strong Mori domain} if
it satisfies ascending chain condition on $w$-ideals).  
Thus by the next result, we obtain the fact that
$t\mhyphen\dim(D[X]_A) = 1$ for the integral domain $D$ in Example \ref{t-dim example 1}
since every one-dimensional Noetherian domain is a UMT-domain.

\begin{proposition}\label{t-Dim 1}
Let $D$ be a UMT-domain that is not a field.
Then $t\mhyphen\dim(D) = t\mhyphen\dim(D[X]) = t\mhyphen\dim(D[X]_A)$.
\end{proposition}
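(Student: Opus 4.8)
The plan is to build on Lemma \ref{t-Dim 0}, which already supplies the sandwich $t\mhyphen\dim(D) \leq t\mhyphen\dim(D[X]_A) \leq t\mhyphen\dim(D[X])$. Consequently the whole string of equalities will follow once I establish the single reverse inequality $t\mhyphen\dim(D[X]) \leq t\mhyphen\dim(D)$, and it is precisely here that the UMT hypothesis enters. So the entire task reduces to bounding the $t$-dimension of $D[X]$ from above by that of $D$.

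To do this, I would take an arbitrary finite chain of prime $t$-ideals $(0) \subsetneq Q_1 \subsetneq \cdots \subsetneq Q_n$ of $D[X]$ and show $n \leq t\mhyphen\dim(D)$. The case $n \leq 1$ is immediate, since $D$ is not a field and hence $t\mhyphen\dim(D) \geq 1$. For $n \geq 2$, the first step is to exclude uppers to zero from the chain. Each $Q_i$ with $i \leq n-1$ is strictly contained in a proper $t$-ideal, so none of these is a maximal $t$-ideal; because the UMT hypothesis forces every upper to zero to be a maximal $t$-ideal of $D[X]$, we conclude $Q_i \cap D \neq (0)$ for $i \leq n-1$, and then $Q_n \cap D \supseteq Q_{n-1} \cap D \neq (0)$ by nestedness of the contractions. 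Hence $P_i := Q_i \cap D \neq (0)$ for every $i$.

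The second step passes from the chain in $D[X]$ to a chain in $D$. Here I would invoke the structural description of the non-upper-to-zero prime $t$-ideals of $D[X]$: for a prime $t$-ideal $Q$ of $D[X]$ with $Q \cap D = P \neq (0)$, one has $Q = P[X]$ with $P$ a prime $t$-ideal of $D$. Applying this to each $Q_i$ gives $Q_i = P_i[X]$, and the strict inclusions $Q_i \subsetneq Q_{i+1}$ then force $P_i \subsetneq P_{i+1}$ (equality of contractions would force equality of the extended ideals). This produces a strict chain $(0) \subsetneq P_1 \subsetneq \cdots \subsetneq P_n$ of prime $t$-ideals of $D$, so $t\mhyphen\dim(D) \geq n$. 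Taking the supremum over all such chains yields $t\mhyphen\dim(D[X]) \leq t\mhyphen\dim(D)$, and combining with Lemma \ref{t-Dim 0} collapses the sandwich to the desired equalities.

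The main obstacle is the structural step: justifying that in a UMT-domain every prime $t$-ideal of $D[X]$ lying over a nonzero prime of $D$ is extended from $D$. This is exactly the point at which the UMT hypothesis is indispensable, and I would either cite the Houston--Zafrullah characterization of UMT-domains (stating that the non-upper-to-zero prime $t$-ideals of $D[X]$ are extended) or derive this extension property directly; everything else, namely the exclusion of uppers to zero and the strictness of the contracted chain, is routine bookkeeping.
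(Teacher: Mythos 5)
Your proposal is correct, and it follows the same skeleton as the paper: both proofs start from the sandwich $t\mhyphen\dim(D) \leq t\mhyphen\dim(D[X]_A) \leq t\mhyphen\dim(D[X])$ of Lemma \ref{t-Dim 0} and then collapse it by showing $t\mhyphen\dim(D[X]) = t\mhyphen\dim(D)$ under the UMT hypothesis. The difference lies in how that last equality is obtained. The paper simply cites Wang's result \cite[Proposition 2.11(3)]{Wang09}, whereas you reprove it: you rule uppers to zero out of any chain of prime $t$-ideals of $D[X]$ (using that UMT forces uppers to zero to be maximal $t$-ideals, so they cannot sit strictly below another proper $t$-ideal), and then contract the chain to $D$ via the structural fact that, over a UMT-domain, every prime $t$-ideal of $D[X]$ with nonzero contraction equals $(Q\cap D)[X]$ with $Q \cap D$ a prime $t$-ideal of $D$. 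That structural fact is exactly the load-bearing ingredient, and it is indeed a known characterization of UMT-domains --- it appears in Fontana--Gabelli--Houston (\emph{UMT-domains and domains with Pr\"ufer integral closure}, Comm. Algebra 26 (1998), Theorem 1.5), building on Houston--Zafrullah, and is also recorded in the survey \cite{Chang2020} already in the bibliography; so your attribution should point there rather than only to Houston--Zafrullah. (Note also that the claim ``$Q\cap D$ is a prime $t$-ideal'' holds for any domain, via $(P[X])_t = P_t[X]$, independently of UMT; only the extension property $Q=(Q\cap D)[X]$ needs the hypothesis.) What your route buys is a self-contained argument, modulo a more fundamental citation, making transparent exactly where the UMT hypothesis enters; what the paper's route buys is brevity, outsourcing the entire polynomial-ring computation to \cite{Wang09}. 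Your bookkeeping steps (nonemptiness of $t\mhyphen{\rm Max}(D)$ when $D$ is not a field, nestedness forcing $Q_n \cap D \neq (0)$, and strictness of the contracted chain) are all sound.
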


\begin{proof}
By Lemma \ref{t-Dim 0}, we have that $t\mhyphen\dim(D)\leq t\mhyphen\dim(D[X]_A)\leq t\mhyphen\dim(D[X])$.
Moreover, since $D$ is a UMT-domain, $t\mhyphen\dim(D[X])=t\mhyphen\dim(D)$
by \cite[Proposition 2.11(3)]{Wang09},
and thus the proof is complete.
\end{proof}

\begin{remark}
{\rm
Given a positive integer $n$,
it is easy to obtain an integral domain $D$ such that $\dim(D[X]_A)=n$
by means of the equality $\dim(D[X]_A)=\dim(D[X])$ \cite[Proposition 2.4]{BL24}
(e.g. take any Noetherian domain of dimension $n-1$).
For the $t$-analogue of this previous observation,
let $D$ be an $n$-dimensional Pr\"ufer domain.
Since every Pr\"ufer domain is a UMT-domain,
we obtain that $t\mhyphen\dim(D[X]_A) = t\mhyphen\dim(D)$ by Proposition \ref{t-Dim 1},
and hence $t\mhyphen\dim(D[X]_A) = n$ because in Pr\"ufer domains the $t$-operation is trivial.
Thus we can construct Anderson rings of any given $t$-dimension.
}
\end{remark}

By Lemma \ref{t-Dim 0} and Proposition \ref{t-Dim 1}, we have

\begin{corollary}\label{t-Dim}
Let $D$ be an integral domain that is not a field.
If $t$-$\dim(D[X]_A) = 1$, then $t$-$\dim(D) = 1$.
Moreover, the converse holds when $D$ is a UMT-domain.
\end{corollary}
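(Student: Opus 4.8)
The plan is to derive Corollary~\ref{t-Dim} directly from the two results that precede it, namely Lemma~\ref{t-Dim 0} and Proposition~\ref{t-Dim 1}. First I would handle the forward implication: assume $t\mhyphen\dim(D[X]_A) = 1$ and deduce $t\mhyphen\dim(D) = 1$. By the left inequality of Lemma~\ref{t-Dim 0} we have $t\mhyphen\dim(D) \leq t\mhyphen\dim(D[X]_A) = 1$, so $t\mhyphen\dim(D) \leq 1$. It then remains to rule out $t\mhyphen\dim(D) = 0$. Since $D$ is not a field, there is a nonzero prime ideal of $D$, and because every nonzero prime ideal contains a height-one prime (or more directly, since $t\mhyphen\mathrm{Max}(D) \neq \emptyset$ when $D$ is not a field, as noted in the preliminaries), we get $t\mhyphen\dim(D) \geq 1$. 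Combining the two bounds yields $t\mhyphen\dim(D) = 1$.

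For the converse, I would assume that $D$ is a UMT-domain with $t\mhyphen\dim(D) = 1$. Here Proposition~\ref{t-Dim 1} applies directly: since $D$ is a UMT-domain that is not a field, it gives the chain of equalities $t\mhyphen\dim(D) = t\mhyphen\dim(D[X]) = t\mhyphen\dim(D[X]_A)$. Reading off the relevant equality, $t\mhyphen\dim(D[X]_A) = t\mhyphen\dim(D) = 1$, which is exactly the desired conclusion. Thus the converse is immediate once the UMT hypothesis is in place.

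The whole argument is short and essentially bookkeeping; the real content has already been absorbed into Lemma~\ref{t-Dim 0} and Proposition~\ref{t-Dim 1}. The only point requiring a moment's care—and the closest thing to an obstacle—is justifying the strict lower bound $t\mhyphen\dim(D) \geq 1$ in the forward direction, which is precisely why the hypothesis that $D$ is \emph{not a field} is needed: for a field one has $t\mhyphen\dim = 0$ while the Anderson ring would still have positive $t$-dimension, breaking the implication. I would make this use of the non-field hypothesis explicit rather than leave it implicit, since it is the hinge on which the forward statement turns.
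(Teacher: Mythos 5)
Your proof is correct and follows exactly the route the paper intends: the paper states the corollary as an immediate consequence of Lemma~\ref{t-Dim 0} and Proposition~\ref{t-Dim 1}, which is precisely your argument. Your extra care in using the non-field hypothesis to secure $t\mhyphen\dim(D)\geq 1$ (via $t\mhyphen\mathrm{Max}(D)\neq\emptyset$) is a worthwhile detail the paper leaves implicit, but it is not a different approach.
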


Following \cite{Sei54}, an integral domain $D$ is said to be 
{\it an F-ring} if $\dim(D)=1$ and $\dim(D[X])=3$. 
For instance,  the cited example in Remark \ref{RemJaffard}
and $D=k + Y L[\![Y]\!]$, where $k$ is a finite field and 
$L$ is a transcendental extension of $k$, are examples of F-rings. 
Also, we note that a one-dimensional domain $D$ is an F-ring 
if and only if $D$ is not a UMT domain (cf. \cite[Corollary 3.6]{HZ89} 
and \cite[Theorem 8]{Sei54}).

The next result shows that the converse of the first argument in Corollary \ref{t-Dim} does not hold in general.

\begin{proposition}\label{F-ring}
Let $D$ be an integral domain.
If $D$ is an F-ring, then $t\mhyphen\dim(D[X]_A) = 2$.
\end{proposition}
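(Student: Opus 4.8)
The plan is to show that an F-ring $D$ satisfies $t\mhyphen\dim(D[X]_A) = 2$ by pinning down the value using the bounds from Lemma \ref{t-Dim 0} together with the characterization of one-dimensional F-rings as precisely the non-UMT domains. First I would observe that an F-ring is one-dimensional by definition, and since the $t$-operation is bounded by the Krull dimension, we have $t\mhyphen\dim(D) \leq \dim(D) = 1$; as $D$ is not a field, in fact $t\mhyphen\dim(D) = 1$. By Lemma \ref{t-Dim 0}, this gives the lower bound $t\mhyphen\dim(D[X]_A) \geq t\mhyphen\dim(D) = 1$, but to reach the value $2$ I will instead lean on the upper bound $t\mhyphen\dim(D[X]_A) \leq t\mhyphen\dim(D[X])$ and compute $t\mhyphen\dim(D[X])$ precisely.

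The crux is the remark recorded just before Proposition \ref{F-ring}: a one-dimensional domain $D$ is an F-ring if and only if $D$ is \emph{not} a UMT-domain. Since an F-ring is one-dimensional and not a UMT-domain, there exists an upper to zero $\mathfrak{q}$ in $D[X]$ that is not a maximal $t$-ideal of $D[X]$. The next step is to exploit this to produce a chain of prime $t$-ideals of length $2$ in $D[X]$, thereby showing $t\mhyphen\dim(D[X]) \geq 2$. Concretely, I would take such an upper to zero $\mathfrak{q}$ (which is always a prime $t$-ideal, being height-one in $D[X]$) and argue that, failing to be $t$-maximal, it is properly contained in a prime $t$-ideal $\mathfrak{Q}$ of $D[X]$; contracting to $D$ yields a nonzero prime $P = \mathfrak{Q} \cap D$, and since $D$ is one-dimensional $P$ is maximal and a prime $t$-ideal, giving the chain $(0) \subsetneq \mathfrak{q} \subsetneq \mathfrak{Q}$ of prime $t$-ideals. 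Combined with $\dim(D[X]) = 3$, which forces $t\mhyphen\dim(D[X]) \leq 3$, and the UMT-failure argument, I expect $t\mhyphen\dim(D[X]) = 2$.

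Finally, to transfer this to $D[X]_A$ I would supply the matching lower bound. The associated prime and upper-to-zero analysis feeding Lemma \ref{t-Dim 0} shows that the chain $(0) \subsetneq \mathfrak{q} \subsetneq \mathfrak{Q}$ in $D[X]$ can be arranged so that $\mathfrak{q}$ is disjoint from $A$ (one may clear the constant term appropriately, noting an upper to zero meets $A$ only in degenerate cases), and then extension and contraction through $D[X]_A$ — using \cite[Corollary 4.3]{BL24} and \cite[Lemma 1.15]{GaRo90} as in the proof of Lemma \ref{t-Dim 0} — carries this chain to a chain of prime $t$-ideals of $D[X]_A$ of the same length. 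Thus $t\mhyphen\dim(D[X]_A) \geq 2$, and together with the upper bound $t\mhyphen\dim(D[X]_A) \leq t\mhyphen\dim(D[X]) = 2$ from Lemma \ref{t-Dim 0} we conclude equality. The main obstacle I anticipate is the disjointness-from-$A$ condition on the upper to zero $\mathfrak{q}$: one must verify that the height-two $t$-chain in $D[X]$ survives the localization at $A$, i.e. that its bottom nonzero term does not collapse or become the unit ideal in $D[X]_A$, which requires care with how uppers to zero interact with the multiplicative set $A = \{f \mid f(0)=1\}$.
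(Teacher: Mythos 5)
Your overall strategy coincides with the paper's (use the failure of the UMT property to manufacture a two-term chain of prime $t$-ideals, then squeeze with Lemma \ref{t-Dim 0}), but there is a genuine gap at the top of your squeeze: you never establish the upper bound $t\mhyphen\dim(D[X]) \leq 2$. The only bound you offer is $t\mhyphen\dim(D[X]) \leq \dim(D[X]) = 3$, and the ``UMT-failure argument'' cannot lower it --- failure of UMT produces long chains of prime $t$-ideals; it does not cap them --- so nothing in your argument excludes $t\mhyphen\dim(D[X]) = 3$, and hence possibly $t\mhyphen\dim(D[X]_A) = 3$. The paper closes exactly this hole by invoking the inequality $t\mhyphen\dim(D) \leq t\mhyphen\dim(D[X]) \leq 2\,t\mhyphen\dim(D)$ of \cite[Theorem 8.7.26]{WK16}: since $t\mhyphen\dim(D) = 1$ and the chain argument shows $t\mhyphen\dim(D[X]) \neq 1$, this pins $t\mhyphen\dim(D[X]) = 2$, whence $2 \leq t\mhyphen\dim(D[X]_A) \leq t\mhyphen\dim(D[X]) = 2$ by Lemma \ref{t-Dim 0}. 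Without that inequality (or some substitute proved for one-dimensional $D$), your proof yields only $2 \leq t\mhyphen\dim(D[X]_A) \leq 3$.

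A secondary issue: the disjointness-from-$A$ problem you flag as the ``main obstacle'' is real, but your proposed fix is not viable --- an upper to zero is determined by its irreducible generator over the quotient field, so one cannot ``clear the constant term,'' and uppers to zero meeting $A$ are not degenerate. The correct observation is that your particular $\mathfrak{q}$ is \emph{automatically} disjoint from $A$: if some $f \in \mathfrak{q}$ had $f(0)=1$, then $c(\mathfrak{q}) \supseteq c(f) = D$, and an upper to zero whose content has $t$-closure equal to $D$ is a maximal $t$-ideal of $D[X]$ by Houston--Zafrullah \cite{HZ89}, contradicting the choice of $\mathfrak{q}$. The paper avoids the issue structurally: the maximal $t$-ideal $\mathfrak{m} \supsetneq \mathfrak{p}$ it works with corresponds to some $M \in t\mhyphen{\rm Max}(D)$ (so every element of $\mathfrak{m}$, in particular of $\mathfrak{p}$, has its constant term in $M$), and the chain it transfers to the Anderson ring is $\mathfrak{p}D[X]_A \subsetneq MD[X]_A$, the first ideal being a prime $t$-ideal because it has height one and the second by \cite[Theorem 4.5]{BL24}. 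Your transfer step would need this identification anyway, since \cite[Corollary 4.3]{BL24} applies to ideals extended from $D$, not to an abstract prime $t$-ideal $\mathfrak{Q}$ of $D[X]$.
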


\begin{proof}
Suppose that $D$ is an F-ring,
{\it i.e.},
$D$ is a one-dimensional integral domain with $\dim(D[X]) = 3$.
Then it is clear that $t\mhyphen\dim(D) = 1$. As mentioned above, $D$ is not a UMT-domain, and then there exists an upper to zero $\mathfrak{p}$ in $D[X]$ that is not a maximal $t$-ideal. Hence there is a maximal $t$-ideal of $D[X]$ that contains properly $\mathfrak{p}$, say $\mathfrak{m}$.
Then $\mathfrak{m} = MD[X]_A$ for some $M \in t\mhyphen{\rm Max}(D)$
\cite[Theorem 4.5]{BL24}.
Also, since $\mathfrak{p}D[X]_A$ has height-one,
it is a prime $t$-ideal of $D[X]_A$.
Hence we obtain the chain $\mathfrak{p}D[X]_A \subsetneq MD[X]_A$
of prime $t$-ideals.
This implies that $t\mhyphen\dim(D[X]_A) \neq 1$,
and hence $t\mhyphen\dim(D[X]) \neq 1$.
Note that $t\mhyphen\dim(D)\leq t\mhyphen\dim(D[X])\leq 2t\mhyphen\dim(D)$ \cite[Theorem 8.7.26]{WK16},
so $t\mhyphen\dim(D[X]) = 2$.
Thus $2 \leq t\mhyphen\dim(D[X]_A) \leq t\mhyphen\dim(D[X]) = 2$,
consequently, $t\mhyphen\dim(D[X]_A) = 2$.
\end{proof}

In what follows, we determine the possible values of $t\mhyphen\dim(D[X]_A)$ in terms of $t\mhyphen\dim(D)$  when $D$ is an integrally closed domain.

\begin{proposition}\label{t-Dim 2}
Let $D$ be an integrally closed domain that is not a field.
Then $t\mhyphen\dim(D[X]_A) - t\mhyphen\dim(D) \leq 1$.
\end{proposition}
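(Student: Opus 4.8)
The plan is to prove the sharper inequality $t\mhyphen\dim(D[X]_A) \leq t\mhyphen\dim(D) + 1$; combined with the lower bound $t\mhyphen\dim(D) \leq t\mhyphen\dim(D[X]_A)$ from Lemma \ref{t-Dim 0}, this yields the claim. We may assume $t\mhyphen\dim(D) =: n$ is finite (otherwise both sides are infinite). I would fix a chain of prime $t$-ideals $(0) \subsetneq \mathfrak{p}_1 \subsetneq \cdots \subsetneq \mathfrak{p}_m$ of $D[X]_A$ and reduce the whole problem to proving $m \leq n+1$. Setting $P_i := \mathfrak{p}_i \cap D$, these contract to a weakly increasing chain $P_1 \subseteq \cdots \subseteq P_m$ in $D$, and by Remark \ref{section begin remark} every nonzero $P_i$ is a prime $t$-ideal of $D$.

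The heart of the argument is to show that the contraction $\mathfrak{p}_i \mapsto P_i$ is \emph{injective} on this chain, i.e. the $P_i$ are pairwise distinct. Suppose $P_i = P_j$ for some $i < j$. Contracting to $D[X]$ and invoking \cite[Lemma 1.15]{GaRo90}, the ideals $\mathfrak{q}_i := \mathfrak{p}_i \cap D[X] \subsetneq \mathfrak{q}_j := \mathfrak{p}_j \cap D[X]$ are distinct prime $t$-ideals of $D[X]$ with common contraction $\mathfrak{q}_i \cap D = P_i = P_j = \mathfrak{q}_j \cap D$. If this common value is $(0)$, then $\mathfrak{q}_i$ and $\mathfrak{q}_j$ are two comparable uppers to zero, which upon localizing at $D \setminus \{0\}$ embed as a strict chain of nonzero primes of the PID $K[X]$, an impossibility. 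If the common value $P \neq (0)$, I would invoke the key structural fact stated below, forcing $\mathfrak{q}_i = PD[X] = \mathfrak{q}_j$, again a contradiction. Thus all $P_i$ are distinct, so $P_1 \subsetneq \cdots \subsetneq P_m$ with at most the bottom term $P_1$ equal to $(0)$. The nonzero $P_i$ then form a chain of prime $t$-ideals of $D$ of length at least $m-1$, whence $m - 1 \leq t\mhyphen\dim(D) = n$, giving $m \leq n+1$ as desired.

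The key structural fact, and the step I expect to be the main obstacle, is the following property of integrally closed domains: if $Q$ is a prime $t$-ideal of $D[X]$ with $Q \cap D = P \neq (0)$, then $Q = PD[X]$; equivalently, no upper to a nonzero prime is a $t$-ideal when $D$ is integrally closed. This is precisely where integral closure enters: it rules out the ``doubling'' permitted by the general bound $t\mhyphen\dim(D[X]) \leq 2\, t\mhyphen\dim(D)$ of \cite[Theorem 8.7.26]{WK16} and confines the excess to the single upper to zero that may sit at the bottom of a chain. To establish it I would reduce, by localizing at $P$ and using that the $w$-operation commutes with localization (and that prime $w$-ideals are exactly prime $t$-ideals), to the case where $D$ is quasi-local with maximal ideal $P$ and $Q$ is an upper to $P$; the contradiction should then follow from the content formula $(c(fg))_v = (c(f)c(g))_v$ that characterizes integrally closed domains. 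Alternatively, this fact can be cited directly from the standard theory of $t$-operations on polynomial extensions. Once it is in hand, the counting above completes the proof, and it is worth noting that this route uses only Remark \ref{section begin remark}, Lemma \ref{t-Dim 0}, and \cite[Lemma 1.15]{GaRo90}, without appealing to the description of the maximal $t$-ideals of $D[X]_A$.
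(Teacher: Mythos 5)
Your proposal is correct in substance, but it takes a genuinely different route from the paper's. The paper's proof is a two-line sandwich: by Lemma \ref{t-Dim 0}, $t\mhyphen\dim(D) \leq t\mhyphen\dim(D[X]_A) \leq t\mhyphen\dim(D[X])$, and the inequality $t\mhyphen\dim(D[X]) \leq 1 + t\mhyphen\dim(D)$ for integrally closed $D$ is simply quoted from \cite[Proposition 2.11]{Wang2001}. What you do instead is, in effect, re-prove that quoted inequality. Your ``key structural fact'' --- over an integrally closed domain, every prime $t$-ideal of $D[X]$ with nonzero contraction to $D$ is extended --- is a true and standard theorem: it is the prime-$t$-ideal form of Querr\'e's theorem that, over an integrally closed domain, every integral divisorial ideal of $D[X]$ meeting $D$ nontrivially equals $J[X]$ for some integral divisorial ideal $J$ of $D$ (see \cite{querre 1980}; the $t$-ideal version is standard in the UMT/P$v$MD literature, e.g.\ it can be extracted from \cite{HZ89} by writing a $t$-ideal as the union of $J_v$ over finitely generated subideals $J$, which one may enlarge so as to meet $D$). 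Granting that fact, your counting argument is airtight: two comparable distinct uppers to zero cannot coexist, so at most one contraction $P_i$ is zero and it sits at the bottom, while the structural fact forces the nonzero contractions to be pairwise distinct prime $t$-ideals of $D$; this gives $t\mhyphen\dim(D[X]_A) \leq 1 + t\mhyphen\dim(D)$ directly. What your route buys is a self-contained argument that exposes exactly where integral closedness enters; what the paper's route buys is brevity, at the cost of an opaque citation.

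One warning, on the step you yourself flag as the obstacle: your fallback sketch for proving the structural fact is the one genuinely shaky part, and you should take the citation route instead. Localization does not behave the way your reduction needs: contractions of $t$-ideals (and $w$-ideals) are again such, but the extension $QD_P[X]$ of a prime $t$-ideal $Q$ of $D[X]$ need not be a $t$-ideal of $D_P[X]$, so ``localize at $P$'' is not harmless. Moreover, the parenthetical claim that prime $w$-ideals are exactly prime $t$-ideals is false in general: a prime ideal is a $w$-ideal if and only if it is contained in some maximal $t$-ideal, which is strictly weaker than being a $t$-ideal (for instance, in the Noetherian pullback $D = k + \mathfrak{m}$, where $\mathfrak{m}$ is the maximal ideal of $C = K[x,y,z]_{(x,y,z)}$ and $K/k$ is a proper finite field extension, $\mathfrak{m} = (D:C)$ is a divisorial maximal ideal, while a height-two prime $P$ with $D_P = C_Q$ regular of depth two is a $w$-ideal but not a $t$-ideal). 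With the structural fact cited rather than re-derived, your proof is complete and correct.
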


\begin{proof}
By Lemma \ref{t-Dim 0}, we have $t\mhyphen\dim(D)\leq t\mhyphen\dim(D[X]_A)\leq t\mhyphen\dim(D[X])$.
Since $D$ is integrally closed, 
it follows from \cite[Proposition 2.11]{Wang2001} that $t\mhyphen\dim(D[X]) \leq 1 + t\mhyphen\dim(D)$.
Thus $t\mhyphen\dim(D)\leq t\mhyphen\dim(D[X]_A)\leq 1+ t\mhyphen\dim(D)$, and this completes the proof.
\end{proof}

Unfortunately, in general, we are unable to determine the value of $t\mhyphen\dim(D[X]_A)$.
This leads to the following open question.

\begin{question}
{\rm
Let $D$ be an integral domain that is not a field.
Does there exist a positive integer $n$ such that $t\mhyphen\dim(D[X]_A) - t\mhyphen\dim(D) \leq n$?
}
\end{question}

Note that the Krull domains satisfy the conditions (1), (2) and (3) in Remark \ref{trivial results},
so we obtain that $D$ is a Krull domain if and only if $D[X]_A$ is a Krull domain.
From now on, we investigate conditions on $D$ under which
$D[X]_A$ becomes some Krull-like domains.
By Corollary \ref{t-Dim},
we obtain results about Krull-like domains.
Before we examine, we need to know a fact about $t$-SFT-rings.

\begin{proposition}\label{t-SFT-ring}
Let $D$ be an integral domain.
If $D[X]_A$ is a $t$-SFT-ring, then so is $D$.
In addition, the converse holds when $D$ is integrally closed.
\end{proposition}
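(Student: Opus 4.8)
The plan is to prove the two implications separately, exploiting throughout the structural facts about $D[X]_A$ already established in \cite{BL24}, especially the correspondence of prime $t$-ideals via extension and contraction. For the forward direction, suppose $D[X]_A$ is a $t$-SFT-ring and let $I$ be a nonzero ideal of $D$. First I would extend $I$ to the ideal $ID[X]_A$, which is nonzero, hence a $t$-SFT-ideal by hypothesis: there is a finitely generated ideal $\mathcal{J} \subseteq ID[X]_A$ and an integer $n$ with $a^n \in \mathcal{J}_v$ for all $a \in (ID[X]_A)_t$. The key step is to pull this data back to $D$. Since $ID[X]_A$ is finitely generated whenever $I$ is, I would first reduce to the case where $I$ is finitely generated (a general $t$-SFT condition only requires testing finitely generated subideals), take $\mathcal{J}$ to be generated by finitely many elements whose numerators lie in $I D[X]$, and use \cite[Lemma 4.1]{BL24} (the contraction formula $ID[X]_A \cap D = I$) together with the behaviour of the $v$- and $t$-operations under the faithfully flat extension $D \to D[X]_A$ (\cite[Proposition 4.2]{BL24}) to produce a finitely generated $J \subseteq I$ in $D$ and an exponent realizing the $t$-SFT condition for $I$. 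The main obstacle here is controlling the $v$-closure: I must show that the relation $a^n \in \mathcal{J}_v$ in $D[X]_A$ descends to $J_v$ in $D$, which requires that $(JD[X]_A)_v \cap D = J_v$ or at least an inclusion in the right direction, and this is exactly the kind of compatibility the cited propositions of \cite{BL24} are designed to supply.

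For the converse, assume $D$ is a $t$-SFT-ring and integrally closed, and let $\mathfrak{p}$ be a prime $t$-ideal of $D[X]_A$; it suffices to verify the $t$-SFT condition on such primes, or equivalently to handle an arbitrary nonzero ideal by localizing the argument at prime $t$-ideals. By Remark \ref{section begin remark} and the classification in \cite[Theorem 4.5]{BL24}, every prime $t$-ideal $\mathfrak{p}$ of $D[X]_A$ is of one of two types: either $\mathfrak{p} = PD[X]_A$ for a prime $t$-ideal $P = \mathfrak{p}\cap D$ of $D$, or $\mathfrak{p}$ contracts to $(0)$ in $D$, i.e.\ $\mathfrak{p}$ comes from an upper to zero. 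I would treat these two cases separately. In the extended case $\mathfrak{p} = PD[X]_A$, the $t$-SFT data for $P$ in $D$ extends: a finitely generated $J \subseteq P$ with $a^n \in J_v$ for $a \in P_t$ gives $JD[X]_A \subseteq \mathfrak{p}$ finitely generated, and using \cite[Proposition 4.2]{BL24} to transport the $v$-operation upward yields the $t$-SFT condition for $\mathfrak{p}$.

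The case of an upper to zero is where integral closure becomes essential. For an upper to zero $\mathfrak{p}$ in $D[X]$ (disjoint from $A$), the integrally closed hypothesis forces $\mathfrak{p} = (c(f)^{-1} f D[X])$-type behaviour, and more usefully makes $\mathfrak{p}$ well-behaved: when $D$ is integrally closed, uppers to zero that are $t$-ideals are $t$-invertible, hence automatically $t$-SFT (take $J = \mathfrak{p}$ itself up to $t$-closure, since a $t$-invertible ideal satisfies the SFT bound with $n = 1$). I would cite the standard structure of $t$-ideals over integrally closed domains (Gilmer--Hoffmann type results, or the Kaplansky transform description) to conclude that every such prime $t$-ideal of $D[X]_A$ inherits the $t$-SFT property. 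The hard part will be making the upper-to-zero case airtight: I must ensure the $t$-invertibility or explicit finitely generated witness survives passage from $D[X]$ to the localization $D[X]_A$, and that the integral-closure hypothesis is used at precisely the point where uppers to zero could otherwise fail to be $t$-SFT. Once both cases are dispatched, assembling them over all prime $t$-ideals gives the $t$-SFT property for every nonzero ideal of $D[X]_A$, completing the converse.
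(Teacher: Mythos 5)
Your forward direction follows essentially the same route as the paper and is completable, but as written it omits the one decisive step: you never say what the finitely generated ideal $J\subseteq I$ of $D$ actually is. The generators of $\mathcal{J}$ are polynomials $f_1,\dots,f_m\in ID[X]$, and the trick the paper uses is to take $J=c(f_1)+\cdots+c(f_m)$, the sum of their contents; then $\mathcal{J}\subseteq JD[X]_A\subseteq ID[X]_A$, so for $a\in I_t$ one gets $a^k\in\mathcal{J}_v\subseteq (JD[X]_A)_v=J_vD[X]_A$, and contracting to $D$ gives $a^k\in J_v$. Also, your preliminary ``reduction to the case where $I$ is finitely generated'' is vacuous: every finitely generated ideal is automatically a $t$-SFT-ideal (take $J=I$ and $n=1$, since $I_t\subseteq I_v$), so it cannot serve as a reduction; one should either run the argument for an arbitrary nonzero ideal, as above, or reduce to prime $t$-ideals, as the paper implicitly does.

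The converse is where the proposal genuinely fails, precisely at the step you flagged as ``the hard part''. Your key claim---that when $D$ is integrally closed, uppers to zero that are $t$-ideals are $t$-invertible---is false. Uppers to zero have height one, and height-one primes are always $t$-ideals, so your claim asserts that \emph{every} upper to zero in $D[X]$ is $t$-invertible; by Houston--Zafrullah \cite{HZ89} this is equivalent to every upper to zero being a maximal $t$-ideal, i.e., to $D$ being a UMT-domain, and an integrally closed UMT-domain is a P$v$MD. You are thus in effect asserting that integrally closed domains are P$v$MDs. This is refuted by the paper's own Remark \ref{RemJaffard}: the domain $D=k+Yk(Z)[Y]_{(Y)}$ is integrally closed yet is an F-ring, hence not a UMT-domain; its non-maximal uppers to zero are contained in $M[X]$, so they are disjoint from $A$ and survive as height-one prime $t$-ideals of $D[X]_A$ that are not $t$-invertible. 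Your ``extended'' case also has a gap: from $a^n\in J_v$ for all $a\in P_t$ you cannot deduce a uniform exponent for elements of $(PD[X]_A)_t$, because the coefficients of $h^n$, for $h$ a polynomial with coefficients in $P_t$, are mixed products $c_{i_1}\cdots c_{i_n}$ of those coefficients rather than pure powers, and Arnold's monomial trick only yields an exponent that grows with $\deg h$. Finally, the dichotomy you invoke for prime $t$-ideals of $D[X]_A$ is not supported by your sources: Remark \ref{section begin remark} only shows that the contraction is a prime $t$-ideal, not that $\mathfrak{p}$ is extended, and \cite[Theorem 4.5]{BL24} concerns maximal $t$-ideals. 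The paper's converse sidesteps all of this with two citations: if $D$ is an integrally closed $t$-SFT-ring, then $D[X]$ is a $t$-SFT-ring \cite[Theorem 2.13]{KP06}, and the $t$-SFT property passes to localizations \cite[Proposition 2.3]{KP06}, so $D[X]_A$, being a localization of $D[X]$, is a $t$-SFT-ring. The integral-closure hypothesis is consumed inside that polynomial-ring theorem, not in any $t$-invertibility statement about uppers to zero; a prime-by-prime argument along your lines would essentially have to reprove it.
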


\begin{proof}
Assume that $D[X]_A$ is a $t$-SFT-ring and let $P$ be a prime $t$-ideal of $D$.
Then $PD[X]_A$ is a prime $t$-ideal of $D[X]_A$,
and so there exists a finitely generated ideal $J \subseteq PD[X]_A$
and a positive integer $k$ such that $f^k \in J_v$ for all $f \in (PD[X]_A)_t$.
We may assume that
there exist $f_1,\dots,f_n \in PD[X]$ such that
$J = f_1D[X]_A + \cdots + f_n D[X]_A$.
Hence we obtain
\begin{eqnarray*}
J &=& f_1D[X]_A + \cdots + f_n D[X]_A\\
&\subseteq& c(f_1)D[X]_A + \cdots + c(f_n)D[X]_A \\
&\subseteq& PD[X]_A.
\end{eqnarray*}
Now, let $I = c(f_1) + \cdots + c(f_n)$.
As $J \subseteq ID[X]_A \subseteq PD[X]_A$,
$f^k \in (ID[X]_A)_v = I_vD[X]_A$ for any $f \in PD[X]_A$.
Hence for any $a \in P$,
$a^k \in I_vD[X]_A$.
This implies that $a^k \in I_v$ for any $a \in P$.
Thus $P$ is a $t$-SFT ideal,
and consequently, $D$ is a $t$-SFT-ring. 
When $D$ is assumed to be integrally closed, the converse follows directly from \cite[Theorem 2.13 and Proposition 2.3]{KP06}.
\end{proof}

Now, we state some well-known facts about Krull-like domains.
\begin{itemize}
\item Weakly Krull domains are exactly a one $t$-dimensional domains 
with finite $t$-character \cite[Lemma 2.1(1)]{AMZ92}.
\item Infra-Krull domains are exactly
a one $t$-dimensional strong Mori domain \cite[Page 199]{Kim 2011}.
\item Generalized Krull domains are precisely
a one $t$-dimensional P$v$MD with finite $t$-character \cite[Theorems 3 and 5]{GT18}.
\item GK-domains are exactly a P$v$MD that is also a 
$t$-SFT-ring \cite[Theorem 2.5]{KP06}.
\end{itemize}
By combining Corollary \ref{t-Dim} and Proposition \ref{t-SFT-ring} with the above facts, we obtain

\begin{corollary}\label{Krull-like domains}
Let $D$ be an integral domain.
Then the following assertions hold.
\begin{enumerate}
\item[(1)]
If $D[X]_A$ is a weakly Krull domain,
then so is $D$.
Moreover, the converse holds when $D$ is a UMT-domain.
\item[(2)]
$D[X]_A$ is an infra-Krull domain if and only if
$D$ is an infra-Krull domain.
\item[(3)]
$D[X]_A$ is a generalized Krull domain if and only if
$D$ is a generalized Krull domain.
\item[(4)]
$D[X]_A$ is a GK-domain if and only if
$D$ is a GK-domain.
\end{enumerate}
\end{corollary}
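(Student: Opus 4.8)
The plan is to prove each of the four statements by combining the characterizations listed just before the corollary with the two key transfer results already established: Corollary \ref{t-Dim} (which controls when $t$-$\dim$ equals one) and Proposition \ref{t-SFT-ring} (which transfers the $t$-SFT property). In every case the strategy is the same: each Krull-like class is characterized as a P$v$MD (or strong Mori, or finite $t$-character domain) together with the numerical condition $t\mhyphen\dim = 1$, and I would transfer the P$v$MD-type condition and the $t$-dimension condition separately, then recombine.

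For part (2), I would use that infra-Krull domains are exactly the one $t$-dimensional strong Mori domains. Since the strong Mori property satisfies the conditions (1), (2), (3) of Remark \ref{trivial results} (localization, descent from $D[X]_N$, and ascent to $D[X]$), we obtain that $D$ is strong Mori if and only if $D[X]_A$ is strong Mori, exactly as noted for Krull domains. A strong Mori domain is in particular a UMT-domain, so Proposition \ref{t-Dim 1} gives $t\mhyphen\dim(D) = t\mhyphen\dim(D[X]_A)$; hence $t\mhyphen\dim(D[X]_A) = 1$ if and only if $t\mhyphen\dim(D) = 1$, and the two equivalences combine to yield (2). Part (3) runs identically using the characterization of generalized Krull domains as one $t$-dimensional P$v$MDs with finite $t$-character: the P$v$MD property satisfies Remark \ref{trivial results}, so it transfers between $D$ and $D[X]_A$; a P$v$MD is a UMT-domain so the $t$-dimension is preserved; and finite $t$-character transfers via the one-to-one correspondence of prime $t$-ideals (Remark \ref{section begin remark} and \cite[Corollary 4.3]{BL24}). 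For part (1), weakly Krull means one $t$-dimensional with finite $t$-character, but without a P$v$MD hypothesis I cannot invoke Proposition \ref{t-Dim 1}; instead I use only the first half of Corollary \ref{t-Dim}, giving the forward implication unconditionally, and the converse under the UMT hypothesis.

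For part (4), I would use the characterization of GK-domains as P$v$MDs that are also $t$-SFT-rings. The P$v$MD property transfers in both directions by Remark \ref{trivial results}. For the $t$-SFT condition, Proposition \ref{t-SFT-ring} gives the forward direction outright; for the converse I need the converse half of Proposition \ref{t-SFT-ring}, which requires $D$ integrally closed — but a P$v$MD is automatically integrally closed, so once we know $D$ is a P$v$MD the integrally-closed hypothesis comes for free, and the $t$-SFT property ascends. Assembling these gives (4) as a genuine equivalence.

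The main obstacle I anticipate is ensuring that finite $t$-character transfers cleanly in parts (1) and (3), since this property is not one of the conditions packaged in Remark \ref{trivial results}. I would need to verify that a nonzero element (or proper ideal) lying in finitely many maximal $t$-ideals of $D$ corresponds precisely to the analogous finiteness in $D[X]_A$, using that $t\mhyphen{\rm Max}(D[X]_A)$ consists of the extensions $MD[X]_A$ with $M \in t\mhyphen{\rm Max}(D)$ together with uppers to zero \cite[Theorem 4.5]{BL24}; the uppers to zero must be shown not to disturb the finite-character count for elements of $D$. The rest of the argument is bookkeeping once the correspondences of prime $t$-ideals and the two cited transfer theorems are in hand.
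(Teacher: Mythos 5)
Your overall strategy---splitting each Krull-like property into a structural component (strong Mori, P$v$MD, finite $t$-character, $t$-SFT) plus the condition $t\mhyphen\dim=1$, and transferring the pieces separately via Remark \ref{trivial results}, Corollary \ref{t-Dim} and Proposition \ref{t-SFT-ring}---is exactly the paper's, and your treatments of (3) and (4) essentially coincide with its proof; indeed, your explicit observation in (4) that a P$v$MD is integrally closed, so the converse half of Proposition \ref{t-SFT-ring} applies, is a point the paper leaves implicit. However, part (2) contains a genuine error: the claim that ``a strong Mori domain is in particular a UMT-domain'' is false. The paper's preliminaries state (citing Chang) only that \emph{one $t$-dimensional} strong Mori domains are UMT-domains, and that hypothesis cannot be dropped: for example, $D=\mathbb{R}+M$ with $M$ the maximal ideal of $\mathbb{C}[Y,Z]_{(Y,Z)}$ is Noetherian (hence strong Mori) with $M$ divisorial of height two, so $t\mhyphen\dim(D)=2$, while a Noetherian UMT-domain must have $t$-dimension at most one (the integral closure of $D_P$ for a maximal $t$-ideal $P$ would be a Krull--Pr\"ufer, i.e.\ Dedekind, domain of dimension $\geq 2$). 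Consequently your step ``Proposition \ref{t-Dim 1} gives $t\mhyphen\dim(D)=t\mhyphen\dim(D[X]_A)$'' for an arbitrary strong Mori $D$ is unjustified. The damage is repairable, because UMT-ness is only needed in the direction where $t\mhyphen\dim(D)=1$ is already in hand (and there $D$ \emph{is} a one $t$-dimensional strong Mori domain, hence UMT), while the other direction uses only the unconditional half of Corollary \ref{t-Dim}; but as written the argument is wrong.

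Beyond this, two transfers are asserted rather than proved, and this is where your route diverges from the paper's. First, your forward implication in (2) needs ``$D[X]_A$ strong Mori $\Rightarrow D$ strong Mori,'' which in your scheme rests on condition (2) of Remark \ref{trivial results} for the strong Mori property, i.e.\ descent from the Nagata ring $D[X]_N$ to $D$; this is not a formality (ascent, via conditions (1) and (3), is the standard Wang--McCasland result, but descent through $D[X]_N$ is the delicate half), and the paper sidesteps it entirely by citing \cite[Corollary 4.12]{BL24} for the forward implication and \cite[Theorem 4.3]{PPH05} together with \cite[Lemma 2]{MZ92} for the converse. Second, the finite $t$-character transfer in (1) and (3), which you rightly flag as the main obstacle but only sketch, is precisely what the paper outsources to \cite[Proposition 4.9]{BL24}. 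Your sketch is viable: for $f\in D[X]$ one checks via Dedekind--Mertens (using $c(g)=D$ for $g\in A$) that $f\in MD[X]_A$ if and only if $c(f)\subseteq M$, and $f$ lies in only finitely many extended uppers to zero since $K[X]$ is a PID; but this must actually be carried out, or replaced by the citation, for the proof to be complete.
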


\begin{proof}
(1) This result follows directly from Corollary \ref{t-Dim} and
\cite[Proposition 4.9]{BL24}.

(2) The direct implication follows directly from Corollary \ref{t-Dim} and
\cite[Corollary 4.12]{BL24}.
The converse follows directly from \cite[Theorem 4.3]{PPH05} and \cite[Lemma 2]{MZ92}.

(3) The direct implication follows directly from Corollary \ref{t-Dim} and
\cite[Proposition 4.9]{BL24} with the fact that
$D$ is a P$v$MD if and only if $D[X]_A$ is a P$v$MD (since
the P$v$MDs satisfy the conditions (1), (2) and (3) in Remark \ref{trivial results}).
For the converse, suppose that $D$ is a generalized Krull domain,
{\it i.e.}, $D$ is a one $t$-dimensional P$v$MD with finite $t$-character.
Then $D[X]_A$ is a P$v$MD with finite $t$-character \cite[Proposition 4.9]{BL24}.
Now, since every P$v$MD is a UMT-domain,
$D[X]_A$ is a one $t$-dimensional domain
by Corollary \ref{t-Dim}.
Thus $D[X]_A$ is a generalized Krull domain.

(4) This result follows directly from Proposition \ref{t-SFT-ring} with
the fact that $D$ is a P$v$MD if and only if $D[X]_A$ is a P$v$MD.
\end{proof}

Notice that the converse of (1) in Corollary \ref{Krull-like domains} does not hold in general.
Indeed, if $D$ is a quasi-local one-dimensional domain, then $D$ is a weakly Krull domain,
but $D[X]_A$ is not since $t\mhyphen\dim(D[X]_A) = 2$ by Proposition \ref{F-ring}.

At the end of this section,
we investigate the transfer of the (locally) $v$-domain property between $D$ and $D[X]_A$.
Specifically, we give a full characterization of Anderson rings that are locally $v$-domains.

\begin{proposition}
Let $D$ be an integral domain.
If $D[X]_A$ is a $v$-domain, then so is $D$. 
\end{proposition}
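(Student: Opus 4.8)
The plan is to verify the defining condition directly: I must show that every nonzero finitely generated ideal $I$ of $D$ is $v$-invertible, i.e. $(II^{-1})_v = D$. Write $K$ for the quotient field of $D$, so that $K(X)$ is the quotient field of $D[X]_A$. Fix a nonzero finitely generated ideal $I = (a_1, \dots, a_n)D$ and set $H := ID[X]_A$, which is a nonzero finitely generated ideal of $D[X]_A$ generated by $a_1, \dots, a_n$. Since $D[X]_A$ is a $v$-domain, $H$ is $v$-invertible, so $(HH^{-1})_v = D[X]_A$ and hence $(HH^{-1})^{-1} = D[X]_A$ (because $J^{-1} = (J_v)^{-1}$ for any fractional ideal $J$ and $(D[X]_A)^{-1} = D[X]_A$).

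First I would record the extension identity $H^{-1} = (ID[X]_A)^{-1} = I^{-1}D[X]_A$. The inclusion $I^{-1}D[X]_A \subseteq (ID[X]_A)^{-1}$ is immediate, since $I^{-1}I \subseteq D$ forces $(I^{-1}D[X]_A)(ID[X]_A) = (I^{-1}I)D[X]_A \subseteq D[X]_A$. The reverse inclusion is where the work lies and is the main obstacle: it rests on the fact that $D[X]_A$ is a faithfully flat $D$-module, so that forming inverses of a finitely generated ideal commutes with the extension $D \hookrightarrow D[X]_A$; this is exactly the content of \cite[Proposition 4.2]{BL24}. Granting this, $HH^{-1} = (ID[X]_A)(I^{-1}D[X]_A) = (II^{-1})D[X]_A$, and therefore $\bigl((II^{-1})D[X]_A\bigr)^{-1} = D[X]_A$.

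Finally I would contract back to $D$. The crucial auxiliary fact is $D[X]_A \cap K = D$: any element of $D[X]_A$ lying in $K$ can be written as $f/g$ with $f \in D[X]$ and $g \in A$, and comparing constant terms in the relation $f = (f/g)\,g$ gives $f/g = f(0) \in D$ since $g(0) = 1$. Using this, for $x \in K$ one has $x \in \bigl((II^{-1})D[X]_A\bigr)^{-1}$ if and only if $x(II^{-1}) \subseteq D[X]_A \cap K = D$, that is, if and only if $x \in (II^{-1})^{-1}$. Intersecting the displayed equality with $K$ thus yields $(II^{-1})^{-1} = D[X]_A \cap K = D$, and consequently $(II^{-1})_v = \bigl((II^{-1})^{-1}\bigr)^{-1} = D^{-1} = D$. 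Hence every nonzero finitely generated ideal of $D$ is $v$-invertible, so $D$ is a $v$-domain. I expect the only delicate point to be the reverse inclusion in the extension identity $(ID[X]_A)^{-1} = I^{-1}D[X]_A$; everything else is formal manipulation of the $v$-operation together with the faithful flatness relation $JD[X]_A \cap D = J$ recorded in \cite[Lemma 4.1]{BL24}.
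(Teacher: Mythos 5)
Your proof is correct and follows essentially the same route as the paper's: extend $I$ to $D[X]_A$, invoke $v$-invertibility there, commute the inverse/$v$-operation with the extension via \cite[Proposition 4.2]{BL24} (flatness), and contract back to $D$. The only differences are cosmetic — you contract using the identity $D[X]_A \cap K = D$ (which you prove directly) and work with $(II^{-1})^{-1}$, whereas the paper contracts via $(II^{-1})_v D[X]_A \cap D$ and leaves the extension identity $(ID[X]_A)^{-1} = I^{-1}D[X]_A$ implicit; if anything, your write-up makes explicit a step the paper glosses over.
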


\begin{proof}
Assume that $D[X]_A$ is a $v$-domain and
let $I$ be a nonzero finitely generated ideal of $D$.
Then $ID[X]_A$ is a nonzero finitely generated ideal of $D[X]_A$.
Hence $((II^{-1})D[X]_A)_v=D[X]_A$.
This implies that $(II^{-1})_vD[X]_A=D[X]_A$ \cite[Proposition 4.2(1)]{BL24}.
Consequently, $(II^{-1})_v=D$,
and thus $D$ is a $v$-domain.
\end{proof}

\begin{lemma}\label{LemLocV}
Let $D$ be an integral domain and let $S$ be a multiplicatives subset of $D$.
If $D$ is a locally $v$-domain,
then so is $D_S$.
\end{lemma}

\begin{proof}
This follows from the fact that $(D_S)_{PD_S}=D_P$ for any prime ideal $P$ of $D$.
\end{proof}

\begin{theorem}\label{Locally_v_dom}
Let $D$ be an integral domain.
Then $D[X]_A$ is a locally $v$-domain
 if and only if
$D$ is a locally $v$-domain.
\end{theorem}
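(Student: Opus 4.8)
The plan is to prove both implications by reducing them to ingredients that are essentially already in hand: the stability of the locally $v$-domain property under localization (Lemma \ref{LemLocV}), the proposition proved just above asserting that if $E[X]_A$ is a $v$-domain then $E$ is a $v$-domain, and one external input, namely that the locally $v$-domain (equivalently, super $v$-domain) property ascends to polynomial rings, so that $D$ is a locally $v$-domain if and only if $D[X]$ is one (see \cite{Z22}). The rest is bookkeeping about how the Anderson ring sits over $D[X]$ and over the localizations $D_P$.

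For the implication that $D$ being a locally $v$-domain forces $D[X]_A$ to be a locally $v$-domain, I would first invoke the ascent result to get that $D[X]$ is a locally $v$-domain. Since $A$ is a multiplicative subset of $D[X]$, the ring $D[X]_A = (D[X])_A$ is simply a localization of $D[X]$, and so Lemma \ref{LemLocV} immediately yields that $D[X]_A$ is a locally $v$-domain. This direction thus requires no computation beyond recognizing $D[X]_A$ as a localization of $D[X]$.

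For the converse, suppose $D[X]_A$ is a locally $v$-domain and fix an arbitrary prime ideal $P$ of $D$. The key step is the localization identity $(D[X]_A)_{D\setminus P}\cong D_P[X]_{A_P}$, where $A_P=\{f\in D_P[X]\mid f(0)=1\}$; that is, localizing the Anderson ring of $D$ at $D\setminus P$ returns the Anderson ring of $D_P$. Granting this, Lemma \ref{LemLocV} shows that $D_P[X]_{A_P}$ is again a locally $v$-domain, hence in particular a $v$-domain. Applying the proposition above with $E=D_P$ then gives that $D_P$ is a $v$-domain. As $P$ was arbitrary, $D$ is a locally $v$-domain, which completes the argument.

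The genuine mathematical content sits in the ascent statement that a locally $v$-domain has a locally $v$-domain polynomial ring; this is the only ingredient not internal to the paper, and I would cite it rather than reprove it. The localization identity $(D[X]_A)_{D\setminus P}\cong D_P[X]_{A_P}$ is routine but deserves care and is best checked by comparing saturated multiplicative sets: an element $g\in D_P[X]$ is inverted in either ring exactly when $g(0)$ is a unit of $D_P$, which follows from the description of the maximal ideals of $D_P[X]_{A_P}$ as those of the form $(M_P+XD_P[X])_{A_P}$, with $M_P$ the maximal ideal of the local ring $D_P$. Thus $g$ avoids every maximal ideal precisely when $g(0)\notin M_P$, i.e. when $g(0)$ is a unit, and this common criterion forces the two localizations to coincide.
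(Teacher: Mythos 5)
Your first implication (from $D$ a locally $v$-domain to $D[X]_A$ a locally $v$-domain) is correct and is exactly the paper's argument: ascent to $D[X]$ via \cite{Z22}, followed by Lemma \ref{LemLocV}. The converse, however, rests on a false identity. The ring $(D[X]_A)_{D\setminus P}$ is the localization of $D[X]$ at (the saturation of) $T=\{sf \mid s\in D\setminus P,\ f\in A\}$, whereas $D_P[X]_{A_P}$ is the localization of $D[X]$ at $T'=\{g\in D[X]\mid g(0)\notin P\}$, and $T'$ is strictly larger in general. Take $D=\mathbb{Z}$, $P=2\mathbb{Z}$ and $g=X-3$. Then $g\in T'$, so $g$ is a unit of $D_P[X]_{A_P}$; but the prime ideal $(X-3)\mathbb{Z}[X]$ meets neither $A$ (a multiple $(X-3)h$ has constant term $-3h(0)\neq 1$) nor $\mathbb{Z}\setminus 2\mathbb{Z}$ (its contraction to $\mathbb{Z}$ is $(0)$), so it survives as a proper prime ideal of $(D[X]_A)_{D\setminus P}$ containing $X-3$, and $g$ is \emph{not} a unit there. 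Thus the two rings differ, and the precise point where your justification breaks is this: the criterion ``$g$ is inverted exactly when $g(0)\notin M_P$'' is established (via \cite[Theorem 2.1]{BL24}) only for $D_P[X]_{A_P}$; transporting it to $(D[X]_A)_{D\setminus P}$ presupposes the isomorphism you are trying to prove, and in fact $(D[X]_A)_{D\setminus P}$ has maximal ideals, such as the one coming from $(X-3)\mathbb{Z}[X]$, that are invisible in that description.

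Fortunately, your strategy survives with the correct identity, and the repaired proof is genuinely different from the paper's. One has $D_P[X]_{A_P}=(D[X]_A)_{\mathfrak{q}}$ where $\mathfrak{q}=(P+XD[X])_A$: indeed $P+XD[X]$ is a prime of $D[X]$ disjoint from $A$, its complement in $D[X]$ is exactly $T'$, and both rings coincide with $D[X]_{T'}$ inside the quotient field of $D[X]$. Since $\mathfrak{q}$ is a prime ideal of $D[X]_A$, the hypothesis that $D[X]_A$ is a locally $v$-domain gives, by definition, that $D_P[X]_{A_P}$ is a $v$-domain; the paper's unlabeled proposition preceding Lemma \ref{LemLocV} (that $E[X]_A$ a $v$-domain forces $E$ a $v$-domain), applied to $E=D_P$, then yields that $D_P$ is a $v$-domain, and $D$ is a locally $v$-domain since $P$ was arbitrary. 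By contrast, the paper reduces to associated primes via \cite[Proposition 3.4]{FZ11}, identifies $(D[X]_A)_{PD[X]_A}$ with the Nagata ring $D_P[X]_{N_P}$ using Lemma \ref{PrAssPri}, observes that $D_P$ is $t$-local, and invokes \cite[Corollary 4]{Z22}; your (fixed) route avoids all of this external machinery and treats every prime, not just associated primes. But as written, the key step of your proposal fails.
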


\begin{proof}
Assume that $D[X]_A$ is a locally  $v$-domain. 
By \cite[Proposition 3.4]{FZ11},  it suffices to prove that $D_P$ 
is a $v$-domain for every associated prime ideal $P$ of $D$. 
Let $P$ be an associated prime ideal of $D$. According to 
Lemma \ref{PrAssPri}, $PD[X]_A$ is an associated prime 
ideal of $D[X]_A$, so $(D[X]_A)_{PD[X]_A}=D_P[X]_{N_P}$ is 
a $v$-domain. Since $P$ is an associated prime ideal of $D$, 
the maximal ideal of $D_P$ is an associated prime ideal 
by \cite[Lemma 1]{BH74}, and hence it is a $t$-ideal 
({\it{i.e.}}, $D_P$ is a $t$-local domain). 
Hence by \cite[Corollary 4]{Z22}, it follows that $D_P$ 
is a $v$-domain. Therefore $D$ is a locally $v$-domain. 
Conversely, if $D$ is a locally $v$-domain, then 
by \cite[Corollary 5]{Z22}, $D[X]$ is also a locally $v$-domain,
 and thus $D[X]_A$ inherits this property by Lemma \ref{LemLocV}.
\end{proof}

\end{document}